\definecolor{darkblue}{RGB}{0,0,160}
\def\e{{\rm e}}
\def\eps{\varepsilon}
\def\d{{\rm d}}
\def\R {\mathbb{R}}
\def\N {\mathbb{N}}
\def\I {{\mathcal I}}
\def\M {{\mathrm M}}
\def\1 {{\mbox{\boldmath 1}}}
\def \l {\langle}
\def \r {\rangle}
\def \and{\quad\text{and}\quad}
\def\ind{\cic{1}}
\newcommand{\cic}{\bm}
\def \no#1#2#3 {{\bf #1} (#3), #2.}
\def \eds#1#2#3 {#1, #2, #3.}
\newcounter{counter}
\numberwithin{equation}{section}
\numberwithin{counter2}{section}
\newtheorem{proposition}[subsection]{Proposition}
\newtheorem{theorem}[subsection]{Theorem}
\newtheorem{corollary}{Corollary}
\newtheorem{lemma}[subsection]{Lemma}
\theoremstyle{definition}
\newtheorem{definition}[subsection]{Definition}
\newtheorem*{remark*}{Remark}
\newtheorem*{warn*}{A word of warning}
\newtheorem{remark}[subsection]{Remark} 
\theoremstyle{plain}
\numberwithin{figure}{section}
\let\save@mathaccent\mathaccent
\newcommand*\if@single[3]{%
  \setbox0\hbox{${\mathaccent"0362{#1}}^H$}%
  \setbox2\hbox{${\mathaccent"0362{\kern0pt#1}}^H$}%
  \ifdim\ht0=\ht2 #3\else #2\fi
  }
\newcommand*\rel@kern[1]{\kern#1\dimexpr\macc@kerna}
\newcommand*\widebar[1]{\@ifnextchar^{{\wide@bar{#1}{0}}}{\wide@bar{#1}{1}}}
\newcommand*\wide@bar[2]{\if@single{#1}{\wide@bar@{#1}{#2}{1}}{\wide@bar@{#1}{#2}{2}}}
\newcommand*\wide@bar@[3]{%
  \begingroup
  \def\mathaccent##1##2{%
%Enable nesting of accents:
    \let\mathaccent\save@mathaccent
%If there's more than a single symbol, use the first character instead (see below):
    \if#32 \let\macc@nucleus\first@char \fi
%Determine the italic correction:
    \setbox\z@\hbox{$\macc@style{\macc@nucleus}_{}$}%
    \setbox\tw@\hbox{$\macc@style{\macc@nucleus}{}_{}$}%
    \dimen@\wd\tw@
    \advance\dimen@-\wd\z@
%Now \dimen@ is the italic correction of the symbol.
    \divide\dimen@ 3
    \@tempdima\wd\tw@
    \advance\@tempdima-\scriptspace
%Now \@tempdima is the width of the symbol.
    \divide\@tempdima 10
    \advance\dimen@-\@tempdima
%Now \dimen@ = (italic correction / 3) - (Breite / 10)
    \ifdim\dimen@>\z@ \dimen@0pt\fi
%The bar will be shortened in the case \dimen@<0 !
    \rel@kern{0.6}\kern-\dimen@
    \if#31
      \overline{\rel@kern{-0.6}\kern\dimen@\macc@nucleus\rel@kern{0.4}\kern\dimen@}%
      \advance\dimen@0.4\dimexpr\macc@kerna
%Place the combined final kern (-\dimen@) if it is >0 or if a superscript follows:
      \let\final@kern#2%
      \ifdim\dimen@<\z@ \let\final@kern1\fi
      \if\final@kern1 \kern-\dimen@\fi
    \else
      \overline{\rel@kern{-0.6}\kern\dimen@#1}%
    \fi
  }%
  \macc@depth\@ne
  \let\math@bgroup\@empty \let\math@egroup\macc@set@skewchar
  \mathsurround\z@ \frozen@everymath{\mathgroup\macc@group\relax}%
  \macc@set@skewchar\relax
  \let\mathaccentV\macc@nested@a
%The following initialises \macc@kerna and calls \mathaccent:
  \if#31
    \macc@nested@a\relax111{#1}%
  \else
%If the argument consists of more than one symbol, and if the first token is
%a letter, use that letter for the computations:
    \def\gobble@till@marker##1\endmarker{}%
    \futurelet\first@char\gobble@till@marker#1\endmarker
    \ifcat\noexpand\first@char A\else
      \def\first@char{}%
    \fi
    \macc@nested@a\relax111{\first@char}%
  \fi
  \endgroup
}
\begin{document}

\title[maximal directional Hilbert transform]{On the maximal directional Hilbert transform \\ in three dimensions}

\author[F. Di Plinio]{Francesco Di Plinio} \address{\noindent Department of Mathematics, University of Virginia, Box 400137, Charlottesville, VA 22904, USA}
\email{\href{mailto:francesco.diplinio@virginia.edu}{\textnormal{francesco.diplinio@virginia.edu}}}
\thanks{F. Di Plinio was partially supported by the National Science Foundation under the grant   NSF-DMS-1650810.}

\author[I. Parissis]{Ioannis Parissis}
\address{Departamento de Matem\'aticas, Universidad del Pais Vasco, Aptdo. 644, 48080 Bilbao, Spain and Ikerbasque, Basque Foundation for Science, Bilbao, Spain}

\email{\href{mailto:ioannis.parissis@ehu.es}{\textnormal{ioannis.parissis@ehu.es}}}
\thanks{I. Parissis is supported by grant  MTM2014-53850 of the Ministerio de Econom\'ia y Competitividad (Spain), grant IT-641-13 of the Basque Government, and IKERBASQUE}

\subjclass[2010]{Primary: 42B20. Secondary: 42B25}
\keywords{Directional operators, lacunary sets of finite order, Stein's conjecture, Zygmund's conjecture, Radon transforms}

%%%%%%%%%%%%%%%%%%%%%%%%%%%%%% ABSTRACT ABSTRACT ABSTRACT
\begin{abstract} 
We establish the sharp growth rate, in terms of cardinality, of  the $L^p$ norms of the maximal Hilbert transform $H_\Omega$  along  finite subsets  of a finite order lacunary set of directions $\Omega \subset \R^3$, answering a question of Parcet and Rogers in dimension $n=3$. Our result is  the first sharp estimate for maximal directional singular integrals  in dimensions greater than 2. 

The proof relies on a representation of the maximal directional Hilbert transform  in terms of a model maximal operator associated to compositions of two-dimensional angular multipliers,   as well as on the usage of    weighted norm inequalities, and their extrapolation, in the directional setting. 
\end{abstract}
%%%%%%%%%%%%%%%%%%%%%%%%%%%%%% ABSTRACT ABSTRACT ABSTRACT
\maketitle

%%%%%%%%%%%%%%%%%%%%%%%%%%%%%% SECTION SECTION SECTION
\section{Introduction}
Let $n\geq 2$. The   Hilbert transform along a direction   ${\omega}\in S^{n-1}$ acts on Schwartz functions on $\R^n$ by the principal value integral
\[
H_{{\omega}}f(x) \coloneqq  \mathrm{p.v.} \int_\R f(x+t{\omega}) \frac{\d t}{t}, \qquad x \in \R^n.
\]
If $\Omega\subset S^{n-1}$, we may define the corresponding \emph{maximal directional Hilbert transform}
\begin{equation}
\label{eq:MHT}
H_\Omega f \coloneqq \sup_{{\omega}\in \Omega} |H_{{\omega}} f|.
\end{equation}
The main result of this paper is the following sharp estimate in the three-dimensional case.
\begin{theorem}\label{th:mainintro} Let $\Omega\subset S^{2}$ be a finite order lacunary set \cite{PR2}. Then for all $1<p<\infty$ 
\begin{equation}
\label{eq:th:mainintro}
\sup_{\substack{O\subset \Omega \\ \# O\leq N}} \left\|H_{O}\right\|_{L^p(\R^3)\to L^p(\R^3)} \leq C \sqrt{\log N}.
\end{equation}
The positive constant $C$ may depend on $1<p<\infty$ and on the lacunary order of $\Omega$ only.
\end{theorem}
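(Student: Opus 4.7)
The plan is to reduce the $L^p(\R^3)$ estimate for $H_\Omega$ to a weighted $L^2$ bound for a model maximal operator built out of compositions of two-dimensional angular Hilbert-type multipliers, and then to invoke Rubio de Francia extrapolation in the directional setting to access the full range of $p$.

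First, I would exploit the finite order lacunary structure of $\Omega\subset S^2$ from \cite{PR2} to parametrize its elements hierarchically: by a primary lacunary sequence of slopes $\{\alpha_j\}$ in one coordinate and, for each $\alpha_j$, a secondary lacunary family $\{\beta_{j,k}\}$ of transverse slopes. This iterative description permits a reduction to two successive one-parameter problems. Writing the frequency as $\xi=(\xi_1,\xi_2,\xi_3)$ and the direction as $\omega=\omega(\alpha,\beta)$, I would then decompose the multiplier $-i\,\mathrm{sgn}(\xi\cdot\omega)$ as a composition of two angular cutoffs, each depending on only one of the parameters, up to smooth Littlewood--Paley errors. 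After taking the supremum over $\omega\in O\subset\Omega$, this yields a model operator $\mathfrak M_O$ obtained by composing two maximal two-dimensional directional Hilbert transforms along one-parameter lacunary families of directions acting in complementary coordinate planes.

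The main step is then the weighted $L^2$ estimate
\[
\|\mathfrak M_O f\|_{L^2(w)} \leq C (\log \#O)^{1/2}\|f\|_{L^2(w)},
\]
valid for all weights $w$ in a directional $A_2$ class adapted to the lacunary family, with $C$ depending only on the lacunary order of $\Omega$ and on the $A_2$ characteristic of $w$. This is the step where I expect the essential technical difficulty, and where directional weights enter in a crucial way. A naive Minkowski--Fubini composition of the two sharp two-dimensional $\sqrt{\log N}$ bounds for the maximal directional Hilbert transform produces a loss of $\log N$, which is off by a full square root from the target. I would circumvent this by invoking a sharp weighted estimate for the two-dimensional maximal directional Hilbert transform along a lacunary family of slopes, with a controlled polynomial dependence on the $A_2$ characteristic, so that the outer composition can be absorbed into a weighted bound on the inner one instead of contributing a second logarithmic factor. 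The preservation of the directional lacunary $A_2$ class under the iteration in the two complementary planes is what allows the two applications to combine into a single $\sqrt{\log N}$ loss.

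Finally, Rubio de Francia extrapolation for directional $A_p$ weights converts the weighted $L^2$ estimate into the unweighted $L^p(\R^3)$ bound \eqref{eq:th:mainintro} for all $1<p<\infty$, with the same growth rate. The main obstacle in this plan is therefore the composition step together with its sharp two-dimensional weighted input; once these are in place, the extrapolation and the passage back to $H_\Omega$ from the model operator proceed along by now standard directional lines.
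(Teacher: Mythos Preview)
Your proposal has a genuine structural gap in the composition step, and it also diverges from the paper's actual mechanism in an essential way.

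\textbf{The composition idea does not deliver $\sqrt{\log N}$.} You propose to write the model operator as a composition of two maximal two-dimensional directional Hilbert transforms and then to avoid the naive $\log N$ loss by ``absorbing the outer composition into a weighted bound on the inner one.'' But a polynomial dependence on the $A_2$ characteristic does not help here: if each factor obeys $\|T_i f\|_{L^2(w)}\le C([w])\sqrt{\log N}\,\|f\|_{L^2(w)}$, composing still gives $C([w])^2\log N$. The $\sqrt{\log N}$ factor is a genuine lower bound for each maximal directional Hilbert transform (Karagulyan), so you cannot trade it for weight dependence. Nothing in your outline explains how one of the two logarithmic factors disappears, and in fact the paper's Section~6 shows that naive compositions of two-dimensional angular multipliers over lacunary index sets really do exhibit $(\sqrt{\log N})^{\lfloor n/2\rfloor}$ growth; the composition model is sharp at $\log N$ only because $n=3$ happens to round down to one.

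\textbf{What the paper actually does.} The paper's decomposition (Lemma~3.1) is not a composition of two 2D maximal Hilbert transforms. It splits $H_\omega$ into an \emph{inner} part $H_\omega K_{U,\cic\ell}$ and an \emph{outer} part built from the wedge multipliers $K_{U,\cic\ell}^{\cic\eps}$ alone. The outer part is the only piece carrying a logarithmic loss: one application of Chang--Wilson--Wolff (Proposition~4.9) costs $\sqrt{\log N}$, and the key Lemma~3.2 shows that in $\R^3$ a \emph{single} directional Littlewood--Paley projection reduces each $K_{U,\cic\ell}^{\cic\eps}$ to a pointwise bound by $\mathrm M_{\mathsf s}^2$, so no second decoupling is needed. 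The inner part, by contrast, is controlled via the square function of Lemma~4.7 and passes the maximal Hilbert transform down to the sectors $\Omega_{\sigma,\ell}$ \emph{with no logarithmic loss at all}. The resulting recursion (Theorem~5.1) is additive,
\[
\Theta_N(\Omega,w)\lesssim [w]^\gamma\Big(\sqrt{\log N}+\sup_{\sigma,\ell}\Theta_N(\Omega_{\sigma,\ell},w)\Big),
\]
so iterating over the $L$ lacunary levels gives $L\sqrt{\log N}$, not $(\sqrt{\log N})^L$. Your multiplicative composition picture misses both the inner/outer split and the single-projection annular estimate that make this additive structure possible.

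A secondary issue: your ``primary $\{\alpha_j\}$, secondary $\{\beta_{j,k}\}$'' parametrization conflates higher \emph{order} lacunarity with higher \emph{dimensional} lacunarity. In the Parcet--Rogers definition used here, a first-order lacunary set in $S^2$ already involves all three coordinate pairs $\sigma\in\Sigma(3)$ simultaneously, not a two-step hierarchy; the order-$L$ iteration then refines each sector. The multiplier $\mathrm{sgn}(\xi\cdot\omega)$ correspondingly requires wedges in all three pairs (cf.\ \eqref{eq:inclusion}), not a clean factorization into two 2D angular cutoffs.

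Your use of directional $A_2$ weights and extrapolation is in line with the paper; the gap is entirely in the model reduction and the missing additive almost-orthogonality.
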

We stress that the supremum in Theorem \eqref{eq:th:mainintro}  is taken over all subsets $O$   having finite cardinality $N$ of a given finite order lacunary set $\Omega$, which may be infinite. Theorem \ref{th:mainintro}   is in fact the Lebesgue measure case of a more general sharp weighted norm inequality which is a natural byproduct of our proof techniques, and is detailed in Corollary \ref{cor:main} for the interested reader. In \cite{LMP}  Laba, Marinelli, and Pramanik have extended to dimensions $n\geq 2$ the lower bound (due to Karagulyan \cite{Karag} in the case $n=2$)
\begin{equation}
\label{eq:Karaglb}
\inf_{ \# \Omega= N}\left\| H_{\Omega}\right\|_{L^2(\R^n)\to L^2(\R^n)} = c_{n} \sqrt{\log N},
\end{equation}
where the infimum is taken over \emph{all} sets $\Omega\subset S^{n-1}$ of finite cardinality $N$. A comparison with the upper bound of Theorem \ref{th:mainintro} and  interpolation reveals that the dependence on the cardinality of the set of directions in our theorem is sharp for all $1<p<\infty$. In fact, \cite{LMP} proves the analogue of \eqref{eq:Karaglb} for all $1<p<\infty$.

%%%%%%%%%%%%%%%%%%%%%%%%%%%%%% SECTION SECTION SECTION
\subsection{Maximal and singular integrals along sets of directions}The study of car\-di\-na\-li\-ty-free, or sharp bounds for the companion directional maximal operator to \eqref{eq:MHT}
\begin{equation}
\label{eq:MF}
\M_\Omega f(x) = \sup_{{\omega}\in \Omega} \M_{{\omega}} f(x), \qquad \M_{{\omega}}f(x) \coloneqq  \sup_{\eps>0} \frac{1}{2\eps} \int_{-\eps}^\eps |f(x+t{\omega})| \d t, \qquad x \in \R^n,
\end{equation}
is a classical subject in real and harmonic analysis, with deep connections to multiplier theorems, Radon transforms, and the Kakeya problem, to name a few. The seminal article by Nagel, Stein and Wainger \cite{NSW} contains a proof that the projection on $S^{n-1}$ of the set of directions
\[
\Omega_\lambda\coloneqq \{(\lambda^{k\alpha_1},\ldots,\lambda^{k\alpha_n}):\, k\geq 1\},\qquad 0<\lambda<1,
\]
gives rise to a bounded maximal operator $\M_{\Omega_\lambda}$ in any dimension $n\geq 2$. Besides providing the first higher dimensional example of such a set of directions, the article \cite{NSW} contains the important novelty of treating the geometric maximal operator $\M_\Omega$ through Fourier analytic tools. This allowed the authors to break the barrier $p=2$ that was present in previous work of C\'ordoba and Fefferman  \cite{CorFef}, and Str\"omberg   \cite{Stromberg}, where the authors used mostly geometric arguments.  

Sj\"ogren and Sj\"olin \cite{SS} proved that, in dimension $n=2$, a sufficient condition for the $L^p$-boundedness of $\M_\Omega$ for some (equivalently for all) $1<p<\infty$ is that $\Omega$ is a lacunary set of finite order; loosely speaking, in dimension $n=2$, a  lacunary set $\Omega'$ of order $L$ is obtained from a lacunary set $\Omega$ of order $L-1$ by inserting within each gap between two consecutive elements $a,b\in \Omega$ two subsequences of suitably rotated copies of  $\Omega^{1}$ having $a,b$ as limit points.

Bateman \cite{BAT} subsequently showed that (up to finite unions) finite order lacunarity of $\Omega$ is necessary in order for $\M_\Omega$ to admit nontrivial $L^p$-bounds when $\Omega$ is an infinite set. While the counterexample by Bateman is highly nontrivial and employs a probabilistic construction based upon   tree percolation, it is rather easy to see that the $L^p$ norm of $\M_\Omega$  must depend on $N$ if $\Omega$ is, say, the set of $N$-th roots of unity. In fact, the sharp dependence  
\begin{equation}
\label{eq:sharpM}
\|\M_\Omega\|_{L^2(\R^2)\to L^2(\R^2)}\sim \log N
\end{equation}
 for sets of this type was proved by  Str\"omberg \cite{Stromberg}; in \cite{Katz}, the structural restriction on $\Omega$ was lifted and the upper bound in \eqref{eq:sharpM} was shown to hold for all finite $\Omega\subset S^1$ with cardinality $N$. Further results concerning   maximal operators along directions coming from sets with intermediate Hausdorff and fractal dimension can be found in \cite{PH,Hare}.

We already reviewed that in all dimensions $n\geq 2$, \cite{NSW} provides us with an example of a lacunary set of directions $\Omega_\lambda$ for which $M_{\Omega_{\lambda}}$ is bounded on $L^p(\R^n)$ for all $1<p<\infty$. Another significant higher dimensional example is given in the article of Carbery, \cite{Carbery}, where the author considers the projection on $S^{n-1}$ of the infinite set 
\[
\Omega^{n-1}\coloneqq \left\{(2^{k_1},\ldots,2^{k_n}):(k_1,\ldots,k_n)\in \mathbb Z\right\},
\] 
and proves that $\M_{\Omega^{n-1}}$ is bounded on $L^p(\R^n)$ for all $1<p<\infty$. In dimension $n=2$ the set $\Omega^{1}$ is the paradigmatic example of lacunary subset of $S^1$.  By the same token, the Carbery set $\Omega^{n-1}$ can be considered as the canonical example of a higher order, higher dimensional lacunary set. More precisely (cf. Definition~\ref{def:lacunary})   $\Omega^{n-1}$ is  a  lacunary set of order $n-1$  with exactly one direction in each cell of the dissection.

In dimensions $n\geq 3$, a general sufficient  characterization of those infinite  $\Omega\subset S^{n-1}$ giving rise to bounded directional maximal operators,  subsuming those of \cite{Carbery,NSW}, was recently established by Parcet and Rogers \cite{PR2} via an almost-orthogonality principle for the $L^p$ norms of $\M_{\Omega}$, resembling in spirit that of \cite{ASV} by Alfonseca, Soria and Vargas in the two-dimensional case. This principle leads naturally to the notion of a lacunary subset of $S^{n-1}$ when $n\geq 3$, which is a sufficient condition for nontrivial $L^p$-bounds of \eqref{eq:MF}. Again loosely speaking, $\Omega\subset S^{n-1}$ is lacunary  of order 1 if there exists a choice of orthonormal basis-- in the language of \cite{PR2}, a \emph{dissection} of the sphere $S^{n-1}$-- such that for all pairs of coordinate vectors $e_j,e_k$ the projection of $\Omega$ on the linear span of $e_j,e_k$ is a two-dimensional lacunary set; higher order lacunary sets are defined inductively in the natural way. The authors of \cite{PR2} also provide a necessary condition which is slightly less restrictive than finite order lacunarity; we send to their article for a precise definition. 

As \eqref{eq:Karaglb} shows, if $\Omega\subset S^{n-1}$ is  infinite, $H_{\Omega}$ is necessarily unbounded. Therefore, the question of sharp quantitative bounds for $H_\Omega$ in terms of the (finite) cardinality of the set $\Omega$ arises as a natural substitute of uniform bounds. In dimension $n=2$, several sharp or near-sharp results of this type have been obtained by Demeter \cite{Dem}, Demeter and the first author \cite{DDP}, and the authors \cite{DPP}. We choose to send to these references for detailed statements and just mention the quantitative bounds which are the closest precursors of our Theorem \ref{th:mainintro}. To begin with, the two-dimensional analogue of \eqref{eq:th:mainintro}, with the same $O(\sqrt{\log N})$ quantitative dependence, was proved by the authors in \cite{DPP}.  The methods of  \cite{DPP} are essentially relying on the fact that (lacunary) directions in $n=2$ can be naturally ordered, and that this order yields a telescopic representation of $H_\Omega$ as a maximal  partial sum of  Fourier restrictions to disjoint (lacunary) cones: see also \cite{Dem,Karag}.
These methods do not extend to dimensions three and higher, where no ordering is possible in general.  
On the other hand, in \cite[Corollary 4.1]{PR2}, following the ideas of \cite[Theorem 1]{DDP}, the authors derive the quantitative estimate \begin{equation}
\label{eq:Hunt}
\sup_{\substack{O\subset \Omega \\ \# O\leq N}} \left\|H_{O}\right\|_{L^p(\R^n)\to L^p(\R^n)} \leq C \log N \left\|\M_{\Omega}\right\|_{L^p(\R^n)\to L^p(\R^n)}
\end{equation}
at the root of which lies Hunt's exponential good-$\lambda$ comparison principle between maximal and singular integrals \cite{Hunt}. Coupling \eqref{eq:Hunt} with the main result of \cite{PR2} yields that the norms of the maximal Hilbert transform over finite subsets of a given finite order lacunary set \emph{in any dimension} grow at most logarithmically with the cardinality of the subset. When $n=3$, Theorem \ref{th:mainintro} improves this result to the sharp $O(\sqrt{\log N})$ quantitative dependence, answering the question posed by Parcet and Rogers in \cite[Section 4]{PR2}.  The estimate of Theorem \ref{th:mainintro} appears to be the first sharp quantitative estimate for directional singular integrals in dimension $n\geq 3$.

\subsection{Techniques of proof}  The key observation leading to    the  Parcet-Rogers theorem \cite{PR2}  is  that  the Fourier support of the \emph{single scale} distribution 
\[
f \mapsto \int_\R f(x-t \omega) \psi(t) \, \d t,
\] 
where $\psi$ is a  Schwartz  function on $\R$, is   covered by a union of two dimensional wedges $\Psi_{\sigma,\omega}$ over pairs $\sigma$ of coordinate directions, provided a suitable smooth $n$-dimensional average of $f$ at the same scale  is subtracted off; the latter piece is controlled by the strong maximal function of $f$. While these wedges  heavily overlap with respect to $\sigma$, see e.g.\ Figure \ref{fig:wedges}, the authors use the inclusion-exclusion principle to reduce to a square function estimate for compositions of two-dimensional multipliers adapted to the wedges $\Psi_{\sigma,\omega}$. The fact that this square function is a bounded operator on $L^p$ follows from the bounded overlap, for fixed $\sigma$, as $\omega$ ranges over a lacunary set $\Omega$, of the associated wedges $\Psi_{\sigma,\omega}$. The proof of our Theorem \ref{th:mainintro} is  also based on a representation of the directional Hilbert transform $H_\omega$ involving  two-dimensional wedge multipliers, which splits $H_\omega$ into an \emph{inner} and an \emph{outer} part: cf.\ Lemma \ref{lemma:represent}.  

The inner part, which is supported on the union of the wedges $\Psi_{\sigma,\omega}$, is amenable to a square function treatment; however, additional difficulties are encountered in comparison to \cite{PR2} as $H_\Omega$ is not a positive operator and does not obey a trivial $L^\infty$-estimate. We circumvent this difficulty by aiming for the stronger $L^2$-weighted norm inequality and relying on extrapolation theory for suitable weights in the natural directional $A_2$ classes. This requires extending the maximal inequality of \cite{PR2} to the weighted setting; while this extension does not require substantial additional efforts  we wrote out the proofs in detail for future reference. As we previously remarked, it also has the pleasant effect of giving a much more general weighted version of Theorem \ref{th:mainintro}: see Corollary \ref{cor:main}, Section \ref{sec:aop}.

Unlike the single scale operator, the outer part of the decomposition is nontrivial, and is actually the one introducing the dependence on the cardinality of the set of directions. It is a signed sum of $2^n$ terms which are compositions of two-dimensional angular multipliers; in general we cannot do better than  estimating the maximal operator associated to each summand. The key observation of our analysis at this point is that these compositions can be bounded pointwise by (compositions of)  strong maximal operators, upon pre-composition with at most $\lfloor \frac n 2 \rfloor  $ directional Littlewood-Paley projections; see Lemma \ref{lemma:annular} and Remark \ref{rem:cex2}. An application of at most $\lfloor \frac n 2 \rfloor  $ Chang-Wilson-Wolff decouplings, see Proposition \ref{prop:weightedcww}, then reduces the maximal estimate to a square function estimate upon loss of  $\lfloor \frac n 2 \rfloor  $ factors of order $\sqrt{\log N}$. This is enough to obtain the sharp result for $n=2,3$ (and, less interestingly, recover \eqref{eq:Hunt} when $n=4,5$), hinting  on the other hand that this approach is not feasible in general dimensions. 

In fact, perhaps surprisingly, we show with a counterexample that this growth rate, worse than that of $H_\Omega$ whenever $n\geq 6$, is actually achieved by the maximal operator associated to the outer parts. This phenomenon displays how the model operator of Lemma \ref{lemma:represent}, based on the combinatorics of two-dimensional wedges, is not subtle enough to   completely capture the cancellation present in $H_\Omega$.

%%%%%%%%%%%%%%%%%%%%%%%%%%%%%% SECTION SECTION SECTION
 
\subsection{Relation to the Hilbert transform along vector fields}
In addition to their intrinsic interest,   Theorem \ref{th:mainintro} and  predecessors may be seen as building blocks towards the resolution of the following question, apocryphally  attributed to E.\ Stein and often referred to as \emph{the vector field problem}: if $v:\R^n\to S^{n-1}$ is a vector field with     Lipschitz constant equal to 1 and pointing within a small neighborhood of $(1/{\sqrt{n}},\ldots,1/{\sqrt{n}})$, prove or disprove that the truncated directional Hilbert transform along $v$
\[
H_{v}f(x) =   \mathrm{p.v.} \int_{|t|<\eps_0} f(x-tv(x)) \, \frac{\d t}{ t}
\]
for $\eps_0>0$ small enough, is a bounded operator from $L^{2}(\R^n)$ into $L^{2,\infty}(\R^n)$. The partial progress in dimension $n=2$, beginning with the work of Lacey and Li \cite{LacLi:mem,LacLi:tams} and continued in e.g.\ \cite{BatThiele,Guo2,DPGTZK} by several authors, rests upon using the Lipschitz property to achieve decoupling of the full maximal operator into a Littlewood-Paley square function similar in spirit to the one appearing in \eqref{eq:pw4}. The estimation of  a single Littlewood-Paley piece in the vector field case is more difficult than the pointwise estimate available to us in Lemma \ref{lemma:annular} and involves, in dimension $n=2$, time-frequency analysis of roughly the same parametric complexity as of that appearing in the Lacey-Thiele proof of Carleson's theorem \cite{LTC}. Lemma   \ref{lemma:annular} in  this context may be interpreted as a \emph{single tree estimate} (cf.\ \cite{LTC,LacLi:tams}), showing that the annular estimate for  $n=3$ might display the same essential complexity as the $n=2$ case.

%%%%%%%%%%%%%%%%%%%%%%%%%%%%%% SECTION SECTION SECTION
 \subsection{Plan of the article} In the forthcoming Section \ref{sec:lacsetup}, we set up the notation for the remainder of the article and provide the precise definition of finite order lacunary sets in $\R^n$.   Section \ref{sec:model} contains the reduction of $H_\Omega$ to the above mentioned model operators, Lemma \ref{lemma:represent} as well as their \emph{single tree estimate} of Lemma \ref{lemma:annular}. In Section  \ref{sec:wnidmo}, after the necessary setup for directional weighted classes,  we prove a weighted version of the Parcet-Rogers maximal estimate in Theorem \ref{thm:weightedM} which, together with the extrapolation techniques of Lemma \ref{lemma:extr1}, is relied upon in the proof of our main result. Theorem \ref{th:mainintro} is derived in Section \ref{sec:aop} as the Lebesgue measure case of a more general sharp weighted estimate, Corollary \ref{cor:main}. This corollary in turn descends from Theorem \ref{th:main},  a $L^2$-weighted almost-orthogonality principle for $H_\Omega$ in the vein of \cite{ASV,PR2}. The final Section \ref{sec:cex} contains the above mentioned sharp counterexamples for the model operator of Lemma \ref{lemma:represent} in dimension 4 and higher: the main result of this section is the lower bound of  Theorem \ref{thm:cex}.

%%%%%%%%%%%%%%%%%%%%%%%%%%%%%% SECTION  SECTION SECTION
\subsection*{Acknowledgments} The authors are deeply grateful to Sara Maloni for fruitful discussions on the subject of completion of a lacunary set. We would also like to thank Maria J. Carro for helpful discussion related to weighted norm inequalities for  directional operators. We are indebted to Keith Rogers for an expert reading and insightful comments that helped us improve the presentation. Finally, we would like to thank the anonymous referees for providing helpful comments and references.

%%%%%%%%%%%%%%%%%%%%%%%%%%%%%% SECTION SECTION SECTION
\section{Lacunary sets of directions: definitions and notation}\label{sec:lacsetup}
In this section, we  give a rigorous definition of finite order lacunary sets which will be used throughout the article. In   essence, our definition is the same as the one given by Parcet and Rogers in  \cite{PR2}.  

%%%%%%%%%%%%%%%%%%%%%%%%%%%%%% SECTION  SECTION SECTION
\subsection{Lacunary sets of directions of finite order}
 For convenience we keep most of the notational conventions of \cite{PR2}. Throughout the paper we work in $\R^n$ and consider sets of directions $\Omega\subset S^{n-1}$. We allow the possibility that $\mathrm{span}(\Omega)=\R^d$ for some non-negative integer $d\leq n$ and write $\Sigma(d) \coloneqq \{	(j,k): \, 1\leq j<k\leq d\}$; we will drop the dependence on $d$ and just write $\Sigma$ when there is no ambiguity. We typically denote the members of $\Omega$ as ${\omega}$ and the members of $\Sigma$ as $\sigma=(j,k)$. Note that $|\Sigma(d)|=d(d-1)/2$.

With the roles of $n,d$, and $\Omega$ as above we assume that for each $\sigma=(j,k)\in\Sigma(d)$ we are given a sequence $\{\theta_{\sigma,\ell}:\, \ell \in \mathbb Z \}$ with the property that there exists $\lambda_\sigma\in(0,1)$ such that
\begin{equation}
\label{lacseq}  \theta_{\sigma,\ell+1} \leq\lambda_\sigma  \theta_{\sigma,\ell},\qquad \theta_{\sigma,0}=\theta_0,\quad\forall \sigma.
\end{equation}
Here we set $\lambda\coloneqq \max_{\sigma\in\Sigma}\lambda_\sigma$ and throughout the paper we will fix a numerical value of $\lambda\in(0,1)$ and we will adopt the convention that all sequences $\theta_{\sigma,\lambda}$ have lacunarity constants uniformly bounded by the same number $\lambda$.
A choice of orthonormal basis (ONB)  of $\mathrm{span}(\Omega)\equiv\R^d$\begin{equation}
\label{eq:basis}
\mathcal B\coloneqq\{ {e}_j:\, j =1,\ldots,d\}
\end{equation}
 and of 
 lacunary sequences $\{\theta_{\sigma,\ell}\}$ as above induces for each  $\sigma \in \Sigma(d)$ a partition of the sphere $S^{d-1}$ into sectors $S_{\sigma,\ell}$:
\begin{equation}
\label{lacdiss}
S^{d-1} =  \bigcup_{\ell \in \mathbb Z} S_{\sigma,\ell},\qquad S_{\sigma,\ell}=S_{(j,k),\ell}\coloneqq \left\{{\omega} \in S^{d-1} :\, \theta_{\sigma,\ell+1}\leq \frac{|{\omega}\cdot  {e}_k|}{|{\omega}\cdot  {e}_j|} <\theta_{\sigma,\ell} \right\}.
\end{equation}
We will henceforth write $\omega_j\coloneqq {\omega}\cdot  {e_j}$ for $1\leq j\leq d$ once the coordinate system is clear from context. The partition above is completed by adding the set $S_{\sigma,\infty}=S_{(j,k),\infty}\coloneqq S^{d-1} \cap ( {e}_j ^\perp\cup{e}_k ^\perp)$. We henceforth write $\mathbb Z ^*\coloneqq \mathbb Z \cup\{\infty\}$.
Now such a partition of the sphere immediately gives a partition of $\Omega$ by setting \begin{equation}
\label{eq:defsectors}
\Omega_{\sigma,\ell}\coloneqq \Omega\cap S_{\sigma,\ell},
\qquad 
\sigma \in \Sigma, \, \ell \in \mathbb Z^*.
\end{equation}  The family of ${d\choose 2}= d(d-1)/2$ partitions indexed by $\sigma\in\Sigma(d)$ ,
\[
\Omega=\bigcup_{\ell \in \mathbb Z} \Omega_{\sigma,\ell},
\] 
will be called \emph{a lacunary dissection} of $\Omega$, with parameters an ONB   $\mathcal B$ as in \eqref{eq:basis} and a choice of sequences $\{\theta_{\sigma,\ell}\}$ as in \eqref{lacseq}. Note that $\big\{\{S_{\sigma,\ell}\}_{\ell\in\mathbb Z^*}:\, \sigma\in\Sigma(d)\big\}$ is a lacunary dissection of $S^{d-1}$. 

 We will refer to sets of the type  $S_{\sigma,\ell}$ and $\Omega_{\sigma,\ell}$ as \emph{sectors} of the lacunary dissection.
We will also work with the partition of $\Omega$ into disjoint \emph{cells} induced by a dissection, namely  intersections of sectors $\Omega_{\sigma,\ell_\sigma}$. 
More precisely, let $\mathcal B$ be a choice of ONB as in \eqref{eq:basis}.
Given ${\cic{\ell}} =\{\ell_{\sigma}: \sigma \in \Sigma(d)\}\in{\mathbb Z} ^\Sigma$ we define the ${\cic{\ell}}$-cell of the dissection  corresponding to $\mathcal B$ as
\[
S_{{\cic{\ell}}}\coloneqq\bigcap_{\sigma \in \Sigma} S_{\sigma, \ell_\sigma},\qquad \Omega_{{\cic{\ell}}} \coloneqq \bigcap_{\sigma \in \Sigma} \Omega_{\sigma, \ell_\sigma}.
\]
Observe that this provides the finer partition of $S^{d-1}$ and $\Omega$, respectively, into \emph{cells}
\[
S^{d-1}=\bigcup_{{\cic{\ell}}\in \mathbb {Z}^\Sigma}S _{{\cic{\ell}}}, \qquad \Omega = \bigcup_{{\cic{\ell}}\in{\mathbb Z}^\Sigma} \Omega_{{\cic{\ell}}}.
\]

The following definition, which is the principal assumption in our main results, was given in \cite[p.\ 1537]{PR2}.

%%%%%%%%%%%%%%%%%%%%%%%%%%%%%% DEFINITION DEFINITION DEFINITION
\begin{definition}[Lacunary set]\label{def:lacunary} Let $\Omega\subset S^{n-1}$ be a set of directions with $\mathrm{span}(\Omega)=\R^d$. Then 
\begin{itemize}
\item[$\cdot$]
$\Omega$ is a lacunary set of order $0$ if it consists of a single direction;
\item[$\cdot$] if $L$ is a positive integer, then $\Omega$ is  lacunary of order $L$ if there exists an ONB     $\mathcal B$ as in \eqref{eq:basis} and a choice of sequences $\{\theta_{\sigma,\ell}\}$ as in \eqref{lacseq} with the property that for each $\sigma\in\Sigma(d)$ and each $\ell\in\mathbb Z^*$ the sector $\Omega_{\sigma,\ell}$ in \eqref{eq:defsectors}  is a lacunary set of order $L-1$.
 \end{itemize}
A set $\Omega$ will be called \emph{lacunary} if it is a finite union of lacunary sets of finite order.
\end{definition}
%%%%%%%%%%%%%%%%%%%%%%%%%%%%%% DEFINITION DEFINITION DEFINITION

For example, $\Omega$ is $1$-lacunary if there exists a dissection such that, for each $\sigma\in\Sigma(d)$ and $\ell\in \N$ the set $\Omega_{\sigma,\ell}$ contains at most one direction.

%%%%%%%%%%%%%%%%%%%%%%%%%%%%%% REMARK REMARK REMARK
\begin{remark}\label{rem:2-j} Let $\Omega$ be a lacunary set of directions and $\beta\in(0,1)$. Then $\Omega$ is a lacunary set of directions with respect to dissections given by the sequence $\theta_{\sigma,\ell}\coloneqq \beta^{\ell}$. This is automatic if $\beta\geq\lambda$ while in the case $\beta<\lambda$ it follows easily by suitably splitting the set $\Omega$ into $O(\log\beta/\min_\sigma\log\lambda_\sigma)$ congruence classes. Unless explicitly mentioned otherwise, all lacunary sets in this paper are given with respect to the sequence
\[
 \theta_{\sigma,\ell}={2^{-\ell}}, \qquad {\ell\in\mathbb Z}.
\]
As our choice of sequences $ \{\theta_{\sigma,\ell}\}$ is universal,  prescribing a lacunary dissection amounts to fixing an orthonormal basis $\mathcal B$ as in \eqref{eq:basis}.
It is also clear that for all proofs in this paper it suffices to consider the case that $\Omega$ is contained in the open positive $2^d$-tant of the sphere $S^{d-1}_+\coloneqq S^{d-1}\cap \R^d _+$. While there are different coordinate systems involved in the definition of a lacunary set $\Omega$, by splitting any lacunary set into finitely many pieces we can assume this property for all dissections that come into play. Furthermore, by standard approximation arguments (e.g. monotone convergence) we can assume that $\Omega$ has empty intersection with all coordinate hyperplanes. These conventions allow us to only consider sectors $S_{\sigma,\ell},\Omega_{\sigma,\ell}$ with $\ell \in \mathbb Z$ instead of $\ell \in \mathbb Z^*$.

On the other hand, and in contrast with the previous conventions concerning the proofs, in the statements of our theorems we always assume that the set $\Omega$ is closed. Furthermore, the basis vectors of any dissection used in the definition of a lacunary set of any order are assumed to be contained in the set. We adopt these conventions throughout the paper without further mention. 
\end{remark}
%%%%%%%%%%%%%%%%%%%%%%%%%%%%%% REMARK REMARK REMARK

%%%%%%%%%%%%%%%%%%%%%%%%%%%%%% REMARK REMARK REMARK
\begin{remark} Although it is necessary to distinguish the case $\mathrm{span}\Omega=\R^d$ with $d<n$ in the definitions,  in the proofs of our estimates we will argue with $d=n$ without explicit mention; by Fubini's theorem, this is without loss of generality.
\end{remark} 
%%%%%%%%%%%%%%%%%%%%%%%%%%%%%% REMARK REMARK REMARK

%%%%%%%%%%%%%%%%%%%%%%%%%%%%%% SECTION  SECTION SECTION
\section{Model operators}  \label{sec:model} For ${\omega}\in S^{n-1}$  (re)define the directional Hilbert transform on
 $\R^n$
\begin{equation} \label{eq:Tomega}
H_{{\omega}} f(x) = \int_{\R^n} \widehat{f}(\xi) \mathrm{sign}  (\xi \cdot {\omega}) \e^{ix\cdot \xi }\, \d \xi .
\end{equation}
In this section we set up a representation formula for \eqref{eq:Tomega}. The central result is Lemma \ref{lemma:represent} below. 
Before the statement we need to introduce some additional notation and auxiliary functions.  For $\ell \in \mathbb Z$ and $\gamma>0$ we consider the two-dimensional wedges
\[
\Psi_{\sigma,\ell,\gamma}\coloneqq\left\{\xi \in \R^n\setminus ({e}_{\sigma(2)})^\perp : \frac{2^{-(\ell+1)}}{\gamma} \leq -\frac{\xi_{\sigma(1)}}{\xi_{\sigma(2)}} <  \gamma 2^{-\ell}  \right\}.
\]
We are interested in the particular cases $\gamma\in \{n,n+1\}$  for which we use the special notations
\begin{equation}
\label{eq:wedgesp}
\Psi_{\sigma,\ell,n}\eqqcolon \Psi_{\sigma,\ell}, \qquad  \Psi_{\sigma,\ell,n+1} \eqqcolon \widetilde \Psi_{\sigma,\ell}.
\end{equation}
Furthermore, let $\phi^{+},\phi^{-}:\R \to [0,1]$ be  smooth functions satisfying
\[
\phi^{+}(x)\coloneqq \begin{cases} 0, & x<-(n+1),  \\ 1, & x>-n ,  \end{cases}\qquad 
\phi^{-}(x)\coloneqq \begin{cases} 1, & x<-\frac{1}{2n},   \\ 0, & x>- \frac{1}{2(n+1)}.  \end{cases}
\]
\begin{figure}[b] 
\begin{center}
\includegraphics[width=0.98\textwidth]{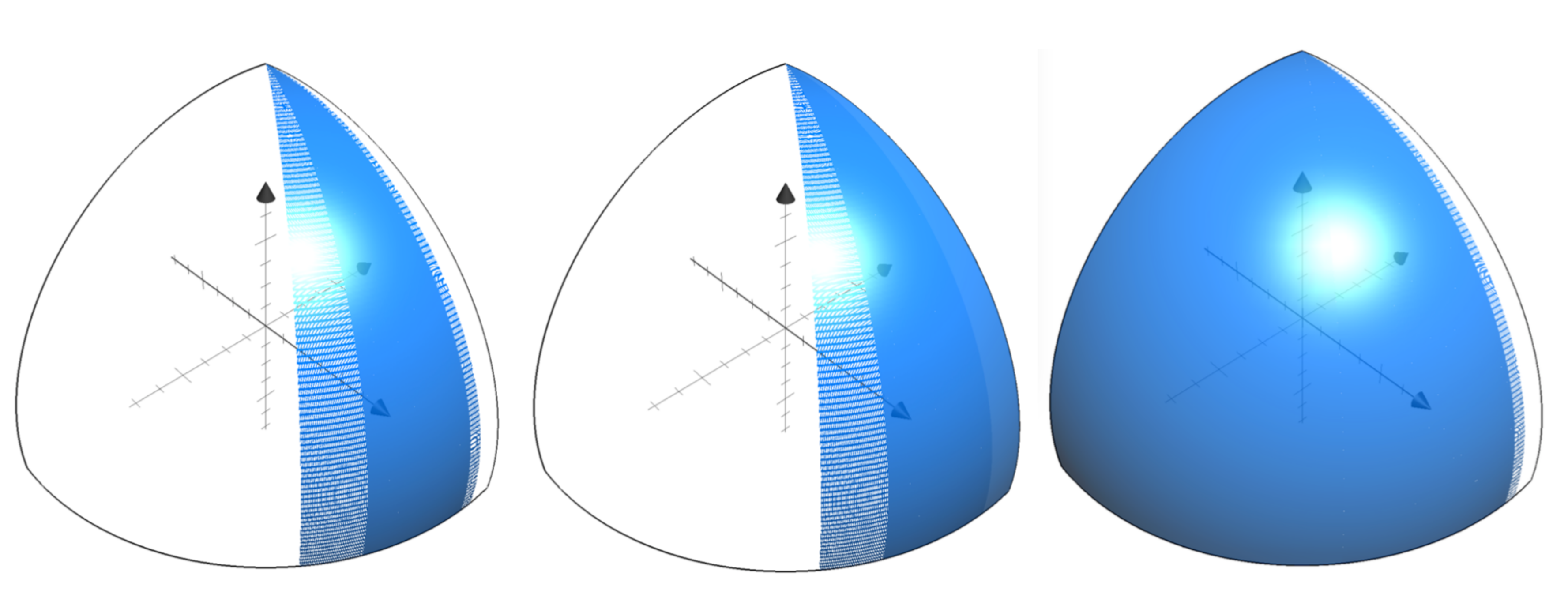}
\caption{\small  The Fourier support of the multipliers $K^{\circ}_{(1,2),\sigma},$ $K^{-}_{(1,2),\sigma},$ and $K^{+}_{(1,2),\sigma}$.\label{fig:multi}}
\end{center}
\end{figure}
We now use the functions $\phi^{+},\phi^{-}$ in order to define the essentially two-dimensional angular Fourier multiplier operators  
\begin{equation}
\label{eq:fmo} 
\begin{split} &
\widehat {K_{\sigma,\ell _\sigma}^{\pm}} (\xi)= \kappa_{\sigma,\ell _\sigma}^{\pm}(\xi_{\sigma(1)},\xi_{\sigma(2)})\coloneqq  \phi^{\pm} \left(2^{\ell_\sigma}\frac{\xi_{\sigma(1)}}{\xi_{\sigma(2)}}\right), 
\\ 
&\widehat {K_{\sigma,\ell _\sigma}^\circ } (\xi)= \kappa_{{\sigma,\ell _\sigma} }^\circ(\xi)\coloneqq  \kappa_{\sigma,\ell _\sigma}^{+}(\xi)\kappa_{\sigma,\ell _\sigma}^{-}(\xi),   \end{split}\end{equation} 
and their compositions
\begin{equation} \label{eq:KUelleps}
 K_{U,{\cic{\ell}}}^{{\cic{\eps}} }\coloneqq   \prod_{\sigma \in U}K_{\sigma,\ell _\sigma}^{\eps_\sigma}, \qquad \varnothing \subsetneq U\subseteq \Sigma,\qquad {\cic{\eps}} \in \{+,\circ,-\}^U;
\end{equation}
when $\eps_{\sigma}=\circ $ for all $\sigma \in U$ we simply write $K_{U,{\cic{\ell}}}$ in place of $K_{U,{\cic{\ell}}}^{{\cic{\eps}} }\,. $
\begin{figure}[h] 
\begin{center}
\includegraphics[width=0.45\textwidth]{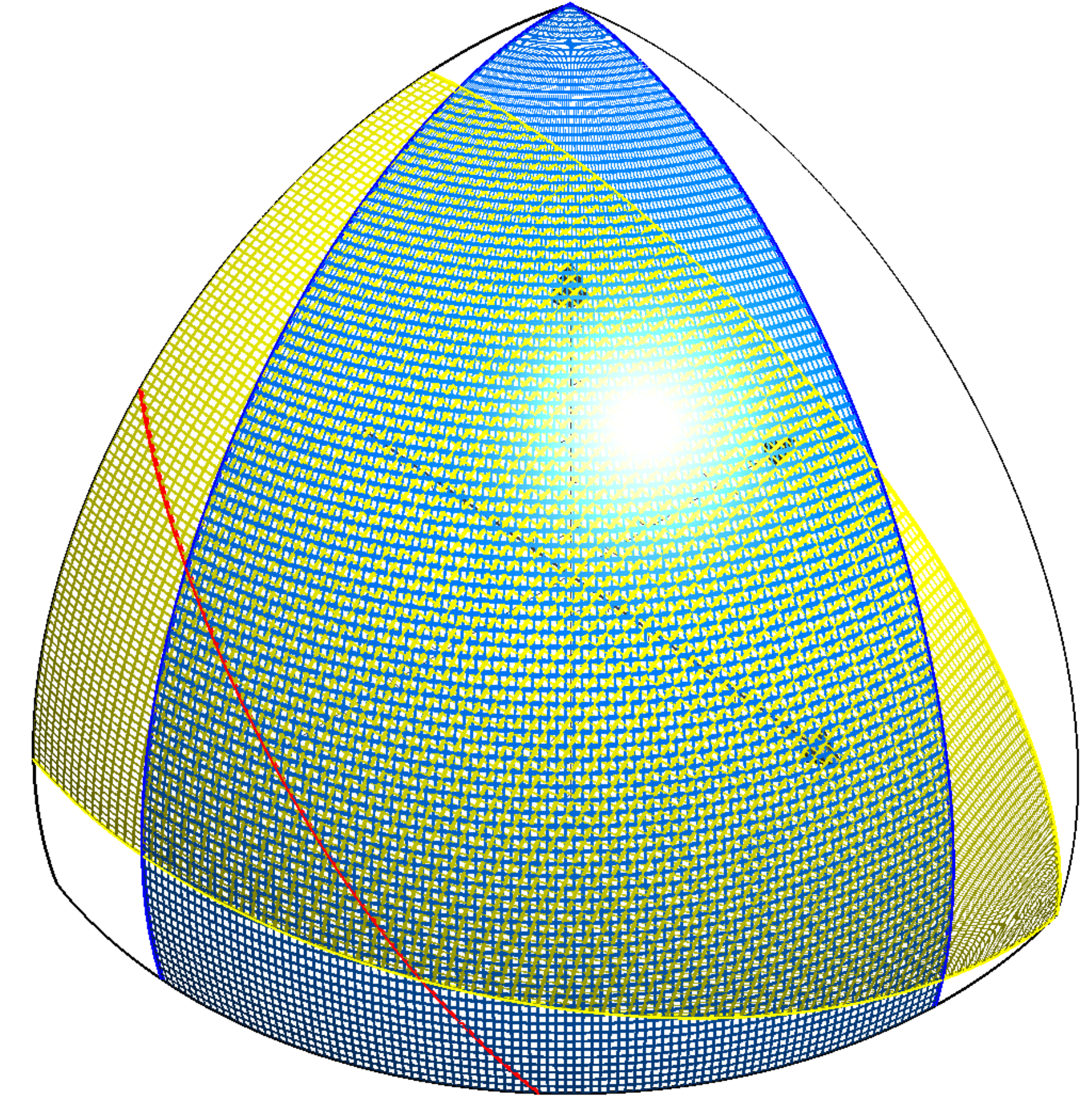}
\caption{\small  Suppose $\omega$ belongs to the cell $S_{\cic{\ell}}$. The red line is the intersection with the sphere $S^2$ of the singularity $\xi\cdot \omega=0$ of $H_\omega$. The blue and yellow wedges are respectively $\Psi_{(1,2),\ell_{(1,2)}}$ and $\Psi_{(2,3),\ell_{(2,3)}}$ from \eqref{eq:wedgesp}. As in the depicted octant $\xi_1$ and $\xi_3$ have the same sign, $\Psi_{(1,3),\ell_{(1,3)}}$ is not visualized. \label{fig:wedges}}
\end{center}
\end{figure}
%%%%%%%%%%%%%%%%%%%%%%%%%%%%%% REMARK REMARK REMARK
\begin{remark}\label{rem:IMPORTANT}  Let $\eps\in\{+,-,\circ\}$. We record the support conditions (see Figure \ref{fig:multi})
\begin{equation}
\label{eq:support}
\big( \nabla_\xi \kappa_{\sigma,\ell_\sigma}^{\eps}\big)  \cic{1}_{\Psi_{\sigma,\ell}}\equiv \big( \nabla_\xi \kappa_{\sigma,\ell_\sigma}^{\eps}\big)  \cic{1}_{\R^n \setminus \widetilde{\Psi}_{\sigma,\ell}}\equiv 0,   \qquad
\kappa^{\eps} _{\sigma,\ell _\sigma}\cic{1}_{\Psi_{\sigma,\ell}} \equiv 1, \qquad \kappa^\circ _{\sigma,\ell _\sigma}\cic{1}_{\R^n \setminus \widetilde{\Psi}_{\sigma,\ell}} \equiv 0.
\end{equation}
Moreover we have the derivative estimates
\begin{equation}
\label{eq:fmoest} \sup_{|\alpha|\leq 10n}
\sup_{\xi\in \R^n} |\xi_{\sigma(1)}|^{\alpha_1}|\xi_{\sigma(2)}|^{\alpha_2} \big| \partial_{\xi_{\sigma(1)}}^{\alpha_1} \partial_{\xi_{\sigma(2)}}^{\alpha_2} \kappa^\eps _{\sigma,\ell_{\sigma}}(\xi)\big| \lesssim 1, \qquad |\alpha|=\alpha_1+\alpha_2.
\end{equation}
We will also use below   that if $\xi\not \in \widetilde\Psi_{\sigma,\ell_\sigma}$, then $\kappa^\eps _{\sigma,\ell_\sigma}$ is constant  in a neighborhood of $\xi$.
\end{remark}
%%%%%%%%%%%%%%%%%%%%%%%%%%%%%% REMARK REMARK REMARK

%%%%%%%%%%%%%%%%%%%%%%%%%%%%%% LEMMA LEMMA LEMMA
\begin{lemma} \label{lemma:represent} Suppose ${\omega} \in S_{{\cic{\ell}}}$, the cell of $S^{n-1}$ with lacunary parameters ${\cic{\ell}}=\{\ell_\sigma:\sigma \in \Sigma\}$. Then we have the pointwise bound
\[
\begin{split}
 |H_{{\omega}} f|  &\lesssim |f| + \sup_{\varnothing \subsetneq U\subseteq \Sigma} \big|H_{{\omega}}   K_{U,{\cic{\ell}}}f \big|   + \sup_{{\cic{\eps}} \in \{+,-\}^\Sigma} \sup_{\varnothing \subsetneq U\subseteq \Sigma}  \big|   K_{U,{\cic{\ell}}}^{{\cic{\eps}} } f\big|.
\end{split}
\]
\end{lemma}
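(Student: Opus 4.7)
The plan is to prove the lemma via a multiplier-level decomposition of $H_\omega$ combined with a Fourier-support ``tournament'' argument. The starting point is the pointwise identity
\[
\kappa^+_{\sigma, \ell_\sigma}(\xi) + \kappa^-_{\sigma, \ell_\sigma}(\xi) - \kappa^\circ_{\sigma, \ell_\sigma}(\xi) \equiv 1, \qquad \sigma \in \Sigma,
\]
which follows from the support-disjointness $(1-\kappa^+_{\sigma, \ell_\sigma})(1-\kappa^-_{\sigma, \ell_\sigma}) \equiv 0$ recorded in \eqref{eq:support}. Expanding the product over $\sigma \in \Sigma$,
\[
1 = \prod_{\sigma \in \Sigma}\bigl(\kappa^+_{\sigma, \ell_\sigma} + \kappa^-_{\sigma, \ell_\sigma} - \kappa^\circ_{\sigma, \ell_\sigma}\bigr) = \sum_{\cic{\alpha}: \Sigma \to \{+,-,\circ\}} (-1)^{|U(\cic{\alpha})|} K^{\cic{\alpha}}_{\Sigma, \cic{\ell}}, \quad U(\cic{\alpha}) := \cic{\alpha}^{-1}(\circ),
\]
and I will apply $H_\omega$ to this identity to split $H_\omega f$ into $3^{|\Sigma|}$ signed pieces $H_\omega K^{\cic{\alpha}}_{\Sigma, \cic{\ell}} f$, which I will then group by the set $U(\cic{\alpha})$.

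The single term with $U(\cic{\alpha}) = \Sigma$ is exactly $\pm H_\omega K_{\Sigma, \cic{\ell}} f$, immediately covered by the first supremum on the right-hand side of the lemma. For $U := U(\cic{\alpha}) \subsetneq \Sigma$, I will analyze $H_\omega K^{\cic{\alpha}}_{\Sigma, \cic{\ell}}$ via a coordinate tournament: for $\omega \in S_{\cic{\ell}}$ and $\xi \notin \widetilde{\Psi}_{\sigma,\ell_\sigma}$ for every $\sigma \in U^c$, the supports of $\kappa^\pm_{\sigma, \ell_\sigma}$ dictate a pairwise comparison between $|\xi_{\sigma(1)}\omega_{\sigma(1)}|$ and $|\xi_{\sigma(2)}\omega_{\sigma(2)}|$, with a factor of at least $n+1$ in the winning direction, in accordance with $\cic{\eps} := \cic{\alpha}|_{U^c} \in \{+,-\}^{U^c}$. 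When the resulting tournament is consistent, it singles out a unique dominant coordinate $j^* = j^*(\cic{\eps})$ with $|\xi_{j^*}\omega_{j^*}| \geq (n+1)|\xi_i\omega_i|$ for all $i \neq j^*$, so that $\mathrm{sign}(\xi\cdot\omega) = \mathrm{sign}(\xi_{j^*})$ on this outside-wedges region of the support; inconsistent tournaments yield an empty outside-wedges region.

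With this sign identification I will split each mixed $H_\omega K^{\cic{\alpha}}_{\Sigma, \cic{\ell}} f$ into (i) the portion where $\xi \notin \widetilde{\Psi}_{\sigma, \ell_\sigma}$ for every $\sigma \in U^c$, and (ii) the complementary portion. On part~(i) the multiplier $\mathrm{sign}(\xi\cdot\omega)$ equals $\mathrm{sign}(\xi_{j^*})$, and since $|\xi_{j^*}|$ is bounded below by a definite fraction of $|\xi|$ on this support the sign is smooth there as a multiplier; a final scalar partition of unity in $\xi_{j^*}$ combined with the derivative estimates \eqref{eq:fmoest} rewrites the resulting operator as $\pm K^{\cic{\eps}}_{U^c, \cic{\ell}}$ modulo a Schwartz convolution error, which is pointwise dominated by a maximal averaging of $f$ (which the ``$|f|$'' on the right-hand side absorbs in the sense of the strong maximal function). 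Part~(ii) is localized to $\bigcup_{\sigma \in U^c}\widetilde{\Psi}_{\sigma, \ell_\sigma}$; there I exploit that $\kappa^\circ_{\sigma,\ell_\sigma}\equiv 1$ on $\Psi_{\sigma,\ell_\sigma}$ and smoothly supported in $\widetilde{\Psi}_{\sigma,\ell_\sigma}$ to reinsert a factor of $K^\circ_{\sigma, \ell_\sigma}$, rewriting the contribution as a finite linear combination of $H_\omega K_{U',\cic{\ell}}$-type terms with $U'\supsetneq U$, absorbed into $\sup_{U'\neq\varnothing}|H_\omega K_{U',\cic{\ell}} f|$.

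The hardest step will be managing the transition regions where $\kappa^{\pm}_{\sigma,\ell_\sigma}$ take intermediate values in $(0,1)$; there the tournament argument is not sharp and the algebraic reinsertion in part~(ii) becomes delicate. I plan to exploit the smoothness of the cutoffs and the bounds \eqref{eq:fmoest} to present the transition-region contribution as convolution against a Schwartz kernel, again pointwise dominated by the strong maximal function of $f$. After summing over all $\cic{\alpha}$ and invoking the triangle inequality, the three types of contributions prescribed by the lemma are exactly what survives.
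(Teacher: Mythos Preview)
Your decomposition $1 = \prod_\sigma(\kappa^+_{\sigma,\ell_\sigma} + \kappa^-_{\sigma,\ell_\sigma} - \kappa^\circ_{\sigma,\ell_\sigma})$ is valid, but the handling of the mixed terms --- those $\cic{\alpha}$ with $\varnothing \neq U(\cic{\alpha}) \subsetneq \Sigma$ --- breaks down. Your tournament in part~(i) only runs over the pairs $\sigma \in U^c$, yet the support of $K^{\cic{\alpha}}_{\Sigma,\cic{\ell}}$ also carries the constraint $\xi \in \mathrm{supp}\,\kappa^\circ_{\sigma,\ell_\sigma} \subset \widetilde{\Psi}_{\sigma,\ell_\sigma}$ for every $\sigma \in U$. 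Those wedges are exactly where the singular hyperplane $\{\xi\cdot\omega = 0\}$ lives (cf.\ the inclusion \eqref{eq:inclusion}): for instance with $n=3$ and $U=\{(1,2)\}$, one may take $\xi_1\omega_1 + \xi_2\omega_2 = 0$ and $|\xi_3\omega_3|$ arbitrarily small while $\xi$ remains outside $\widetilde{\Psi}_{(1,3),\ell_{(1,3)}}\cup\widetilde{\Psi}_{(2,3),\ell_{(2,3)}}$. Hence $\mathrm{sign}(\xi\cdot\omega)$ is \emph{not} constant on your part-(i) region when $U\neq\varnothing$, and cannot be replaced by $\mathrm{sign}(\xi_{j^*})$. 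Relatedly, the claim that part~(i) becomes $\pm K^{\cic{\eps}}_{U^c,\cic{\ell}}$ ``modulo a Schwartz convolution error'' ignores the genuine angular localization $\prod_{\sigma\in U}\kappa^\circ_{\sigma,\ell_\sigma}$, which is nowhere near a constant; the reinsertion in part~(ii) does not yield pure $H_\omega K_{U',\cic{\ell}}$ terms either, since the $\kappa^{\pm}$ factors on $U^c$ persist. Finally, the $|f|$ on the right-hand side of the lemma is literally $|f(x)|$, not a maximal function, so Schwartz-tail errors cannot be absorbed there.

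The paper avoids the mixed-term difficulty by applying inclusion-exclusion \emph{only} to $\kappa^\circ$: one writes $\mathrm{Id} = \sum_{\varnothing\neq U}(-1)^{\#U+1}K_{U,\cic{\ell}} + \prod_\sigma(\mathrm{Id}-K^\circ_{\sigma,\ell_\sigma})$, so the first sum directly produces the $H_\omega K_{U,\cic{\ell}}f$ terms with $H_\omega$ intact, while the remainder has symbol supported in $\R^n\setminus\bigcup_\sigma\Psi_{\sigma,\ell_\sigma}$, entirely away from the singularity. Only then does one use $(1-\kappa^\circ_\sigma) = (1-\kappa^+_\sigma)+(1-\kappa^-_\sigma)$ (disjoint supports) to split into $2^{|\Sigma|}$ pieces $\kappa_{\cic{\eps}} = \prod_\sigma(1-\kappa^{\eps_\sigma}_{\sigma,\ell_\sigma})$, each supported on a \emph{connected} set disjoint from $C_\omega$; connectedness --- rather than a tournament --- forces $\mathrm{sign}(\xi\cdot\omega)$ to be a single constant there. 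The $|f|$ then arises cleanly from $T_{\cic{\eps}} = \mathrm{Id} - (\mathrm{Id}-T_{\cic{\eps}})$, with no error term at all.
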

%%%%%%%%%%%%%%%%%%%%%%%%%%%%%% LEMMA LEMMA LEMMA

%%%%%%%%%%%%%%%%%%%%%%%%%%%%%% PROOF PROOF PROOF
\begin{proof} As
\[
\mathrm{Id} = \bigg[\sum_{\varnothing \subsetneq U\subseteq \Sigma}  (-1)^{\#U + 1} K_{U,{\cic{\ell}}} \bigg]+\bigg[ \prod_{\sigma \in \Sigma }\left( \mathrm{Id}- K_{\sigma,\ell _\sigma} \right)\bigg ]
\]
we write 
\begin{equation}
\label{eq:split1}
H_{{\omega}} f = \bigg[  \sum_{\varnothing \subsetneq U\subseteq \Sigma}  (-1)^{\#U + 1} H_{{\omega}} K_{U,{\cic{\ell}}}f \bigg] +T  f
\end{equation}
where $T$ is the Fourier multiplier with symbol
\begin{equation}
\label{eq:multi}
m(\xi)=
\widehat{T} (\xi)= \mathrm{sign} ({\omega}\cdot \xi) \prod_{\sigma \in \Sigma} \big(1-  \kappa_{\sigma,\ell _\sigma}^\circ(\xi)\big) .
\end{equation}
We have to treat the term $T$. First of all, we check that
\begin{equation} \label{eq:inclusion}
C_{{\omega}}\coloneqq\Big\{\xi \in \R^n : |\xi \cdot{\omega}|<\frac{1}{n}\max_{1\leq j\leq n}|\omega_j\xi_j|  \Big\} \subset D_{{\cic{\ell}}}\coloneqq\bigcup_{\sigma \in \Sigma} \Psi_{\sigma,\ell_\sigma}.
\end{equation}
This is essentially depicted in Figure \ref{fig:wedges} and is a sharpening of the argument in \cite[Proof of Theorem A]{PR2}. We prove \eqref{eq:inclusion} by showing that $\R^n\setminus D_{{\cic{\ell}}}  \subseteq  \R^n\setminus C_{{\omega}}$. To that end let $\xi\in \R^n\setminus D_{{\cic{\ell}}} $. Writing $\eta_j\coloneqq\omega_j \xi_j $ and remembering the convention $\omega_j>0$ for all $j$ we then have that
\[
-\frac{\eta_{\sigma(1)}}{\eta_{\sigma(2)}} \not \in \textstyle\left[\frac{1}{n} , n \right]  \qquad \forall \sigma \in \Sigma.
\]  
Choose $j_\star$ such that $|\eta_{j_\star}| =\max_{1\leq j\leq n} |\eta_j|$. Now we note that if $\eta_j \eta_{j_\star}\geq 0$ for all $ j\in\{1,\ldots,n\}\setminus \{j_\star\}$ then $\xi \notin C_{{\omega}}$ so we are done. Otherwise we define $k_\star$ by means of 
$ |\eta_{k_\star}|\coloneqq\max_{j: \eta_{j_\star}\eta_j<0} |\eta_j|;$ as $|\eta_{j_\star}|\geq n|\eta_{k_\star}|$ we end up with  
\[
|\xi \cdot{\omega}|=
\Big|\sum_{1\leq j\leq n} \eta_j\Big| \geq |\eta_{j_\star}| - (n-1) |\eta_{k_\star}| \geq \frac{|\eta_{j_\star}|}{n} = \max_{1\leq j\leq n}|\omega_j\xi_j|
\]
which is the claim \eqref{eq:inclusion}. Noting that \[\mathrm{supp}\, m =\R^n\setminus \bigcup_{\sigma\in\Sigma} \Psi_{\sigma,\ell_{\sigma}}=\R^n\setminus D_{{\cic{\ell}}}\]  this claim tells us that   $ \mathrm{supp}\, m \cap C_{{\omega}} =\emptyset$ whence if $\xi \in\mathrm{supp}\, m  $ the signum of $({\omega}\cdot \xi)$ is constant in a neighborhood of $\xi$. Now using the easy to verify fact that $(1-\phi^+\phi^{-}) = (1-\phi^+)+ (1-\phi^{-})  $ and  the two summand are supported in disjoint intervals we can rewrite \eqref{eq:multi} as
\[
m (\xi) =
\sum_{{\cic{\eps}} \in \{+,-\}^\Sigma}  \mathrm{sign} ({\omega}\cdot \xi) \kappa_{{\cic{\eps}}}(\xi), \qquad  \kappa_{{\cic{\eps}}}(\xi)\coloneqq  \prod_{\sigma \in \Sigma}\big(1-\kappa_{\sigma,\ell _\sigma}^{\eps_\sigma}\big),
\]
for $\cic{\eps}=\{\eps_\sigma:\, \sigma\in\Sigma\}$. As $\mathrm{supp} \,\kappa_{{\cic{\eps}}}  $ is a connected set not intersecting $C_{{\omega}}$ we conclude that $\mathrm{sign} ({\omega}\cdot \xi)$ is constant on $\mathrm{supp}\, \kappa_{{\cic{\eps}}}$. Therefore if $T_{{\cic{\eps}}}$ is the Fourier multiplier with symbol $\kappa_{{\cic{\eps}}}$
\begin{equation}
\label{eq:intermediate1}
|T f|\leq  \sum_{{\cic{\eps}}\in \{+,-\}^{\Sigma}} |T_{{\cic{\eps}}} f|.
\end{equation}
Now we observe that the symbol of $\mathrm{Id}-T_{{\cic{\eps}}}$ is equal to 
\[
 1-\prod_{\sigma \in \Sigma}\big(1-\kappa_{\sigma,\ell _\sigma}^{\eps_\sigma}\big) = \sum_{\varnothing\subsetneq U \subseteq \Sigma} (-1)^{\# U+1} \prod_{\sigma\in U} \kappa_{\sigma,\ell _\sigma}^{\eps_\sigma},
\]
and putting together the last display with \eqref{eq:split1} and \eqref{eq:intermediate1} we achieve  the pointwise estimate  claimed in  the Lemma.
\end{proof}
%%%%%%%%%%%%%%%%%%%%%%%%%%%%%% PROOF PROOF PROOF

In the next lemma  we prove an  annular estimate for the multiplier operators of \eqref{eq:KUelleps}. To do so we will need to precompose these operators with suitable Littlewood-Paley projections which we now define. Let $p,q$ be   smooth functions on $\R$ with
\[\begin{split} 
& \mathrm{supp} \, p\subset \left\{\xi \in \R :\, {\textstyle\frac12 < |\xi| <2}\right\}, \qquad \sum_{t\in \mathbb Z} p(2^{-t} \xi)=1,\quad \xi\neq 0   ,
\\ 
& \mathrm{supp} \, q  \subset \left\{\xi \in \R:\, {\textstyle \frac14 <} |\xi| <4\right\}, \qquad q=1\textrm { on }  \left\{\xi \in \R :\, {\textstyle\frac12 < |\xi| <2}\right\}. \end{split}
\]
Now for $\upsilon \in \{1,\ldots, n\}$ we define the Fourier multiplier operators on $\R^n$
\[
\widehat{P_t^{\upsilon} f}(\xi) \coloneqq  \widehat{ f}(\xi) p(2^{-t}\xi\cdot {e}_\upsilon), \qquad \widehat{Q_t^{\upsilon} f}(\xi) \coloneqq \widehat{ f}(\xi) q(2^{-t}\xi\cdot {e}_\upsilon).
\]
 Thus $\{P_t ^\upsilon\}_t$ is a one-dimensional   Littlewood-Paley decomposition, acting on the $\upsilon$-th variable only, and being the identity with respect to all other frequency variables. Here and in the rest of the paper we write $\mathrm M_{\mathsf{s}}$ for the strong maximal function and $\mathrm M^2 _{\mathsf{s}}\coloneqq \mathrm M_{\mathsf{s}}\circ \mathrm M_{\mathsf{s}}$. 
 
%%%%%%%%%%%%%%%%%%%%%%%%%%%%%% LEMMA LEMMA LEMMA 
\begin{lemma}\label{lemma:annular} Let $\mathrm{supp} \widehat{f} \subset Q$ where $Q$ is any of the $2^{3}$ octants of $\,\mathbb R^3$.  Let $\varnothing \subsetneq U\subseteq \Sigma,\, {\cic{\eps}} \in \{+,-\}^U$. There is a choice $\upsilon=\upsilon(U,{\cic{\eps}},Q)\in \{1,\ldots,n\}$ such that the pointwise estimate
\[
 \big|K_{U,{\cic{\ell}} \,}^{{\cic{\eps}} }(P_{t}^{\upsilon} f)(x)\big| \lesssim \mathrm M^2 _{\mathsf{s}} (P_{t}^{\upsilon} f) (x),\quad x\in\R^3,
\]
holds uniformly over all $t\in \R$.
\end{lemma}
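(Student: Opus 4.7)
The plan is to exhibit, for each configuration $(U,\cic\eps,Q)$, a coordinate direction $\upsilon\in\{1,2,3\}$ such that the symbol $m_t(\xi)\coloneqq \kappa^{\cic\eps}_{U,\cic\ell}(\xi)\,p(2^{-t}\xi_\upsilon)$ becomes, after at most one elementary multiplicative decomposition, a H\"ormander--Mihlin symbol supported in a rectangular box of $\R^3$. Once that is done, the derivative bounds \eqref{eq:fmoest} on $\kappa^{\eps_\sigma}_{\sigma,\ell_\sigma}$ give the Mihlin estimate on the box, so the convolution kernel of each piece is an $L^1$-normalized Schwartz bump at the dual scale, and the pointwise bound by $\M_{\mathsf s}$ follows; this implies \emph{a fortiori} the $\M^2_{\mathsf s}$-bound asserted in the lemma.

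To select $\upsilon$, I would first reduce to ``bad'' factors. For each $\sigma\in U$, either $\xi_{\sigma(1)}\xi_{\sigma(2)}>0$ on $Q$, so that $\kappa^+_{\sigma,\ell_\sigma}\equiv 1$ and $\kappa^-_{\sigma,\ell_\sigma}\equiv 0$ (in the latter case the entire operator vanishes identically, in the former the factor disappears), or $\xi_{\sigma(1)}\xi_{\sigma(2)}<0$, in which case the support of $\kappa^{\eps_\sigma}_{\sigma,\ell_\sigma}$ enforces $|\xi_{\sigma(1)}|\lesssim 2^{-\ell_\sigma}|\xi_{\sigma(2)}|$ when $\eps_\sigma=+$, and $|\xi_{\sigma(2)}|\lesssim 2^{\ell_\sigma}|\xi_{\sigma(1)}|$ when $\eps_\sigma=-$. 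Each nontrivial bad factor contributes a directed edge on the vertex set $\{1,2,3\}$ from its ``big'' to its ``small'' variable. Since in any mixed octant of $\R^3$ exactly two of the three pairs in $\Sigma$ are bad, the resulting graph has at most two edges. When it is a single edge, a chain, or a fan-out admitting a unique source $v$, I take $\upsilon=v$: then $P_t^\upsilon$ pins $|\xi_\upsilon|\sim 2^t$, the out-edges propagate upper bounds of the form $|\xi_j|\lesssim 2^{c_j+t}$ (with $c_j$ depending on the $\ell_\sigma$'s) to every variable in the span of $U$, the resulting 3D box together with \eqref{eq:fmoest} furnishes the Mihlin estimate, and the pointwise bound $\lesssim \M_{\mathsf s}(P_t^\upsilon f)$ is immediate.

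The main obstacle is the remaining graph pattern, a fan-in, arising when two pairs in $U$ share their ``small'' variable under matching $\eps$-signs: concretely, $U\supseteq\{(1,3),(2,3)\}$ with $\cic\eps=(-,-)$ on an octant where $\xi_3$ has the odd sign, and the symmetric $U\supseteq\{(1,2),(1,3)\}$ with $\cic\eps=(+,+)$ on an octant where $\xi_1$ is odd. Here the graph has two sources and no single $P_t^\upsilon$ pins both ``big'' variables. My plan for this case is to pick $\upsilon$ equal to one of the sources, say $\upsilon=1$ in the first scenario, and to apply the telescoping identity
\[
\kappa^-_{(1,3),\ell_{13}}\kappa^-_{(2,3),\ell_{23}}\;=\;\kappa^-_{(1,3),\ell_{13}}\;-\;\kappa^-_{(1,3),\ell_{13}}\bigl(1-\kappa^-_{(2,3),\ell_{23}}\bigr).
\]
Paired with $p(2^{-t}\xi_1)$, the first summand is supported in $\{|\xi_1|\sim 2^t,\,|\xi_3|\lesssim 2^{\ell_{13}+t}\}\times \R_{\xi_2}$; its kernel is a Schwartz bump in $(x_1,x_3)$ tensored with $\delta_{x_2}$, which is controlled pointwise by the two-dimensional strong maximal function in $(x_1,x_3)$, hence by $\M_{\mathsf s}$. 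For the second summand, $1-\kappa^-_{(2,3),\ell_{23}}$ is supported in the opposite wedge $|\xi_2|\lesssim 2^{-\ell_{23}}|\xi_3|$, which, combined with $|\xi_3|\lesssim 2^{\ell_{13}+t}$ forced by $\kappa^-_{(1,3),\ell_{13}}p(2^{-t}\xi_1)$, confines the full symbol to a genuine 3D box $\{|\xi_1|\sim 2^t,\,|\xi_2|\lesssim 2^{\ell_{13}-\ell_{23}+t},\,|\xi_3|\lesssim 2^{\ell_{13}+t}\}$, once more H\"ormander--Mihlin and therefore pointwise dominated by $\M_{\mathsf s}(P_t^1 f)$. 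Summing yields the claim, with the $\M^2_{\mathsf s}$ stated being a generous ceiling that absorbs this short composition and covers the symmetric $(+,+)$ case handled verbatim.
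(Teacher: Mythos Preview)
Your reduction to at most two ``bad'' pairs and the graph picture are fine, and in the fan-out case your choice $\upsilon=\text{source}$ coincides with the paper's choice (the common index of the two pairs). The gap is in the \emph{chain} case. Take $U=\{(1,2),(2,3)\}$ with $\cic\eps=(+,+)$ on the octant $\{\xi_1>0,\xi_2<0,\xi_3>0\}$: the edges are $3\to 2$ and $2\to 1$, and you set $\upsilon=3$. But the factor $\kappa^+_{(1,2)}$ does not involve $\xi_3$, so $P_t^3$ gives it no localization. Your ``Mihlin on a box'' claim fails exactly here: \eqref{eq:fmoest} yields only $|\partial_{\xi_2}\kappa^+_{(1,2)}|\lesssim |\xi_2|^{-1}$, and inside your box $|\xi_2|$ ranges over $(0,c\,2^t]$, so this is not $\lesssim 2^{-t}$. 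Concretely, if $\widehat g(\xi_1,\xi_2)$ is supported where $|\xi_2|\ll 2^t$, then $K^+_{(2,3)}(g\otimes h)=g\otimes h$ for any $h$ band-limited to $|\xi_3|\sim 2^t$, whence $K_{U,\cic\ell}^{\cic\eps}P_t^3(g\otimes h)=(K^+_{(1,2)}g)\otimes h$; since $K^+_{(1,2)}$ is a genuine two-dimensional singular integral it is not pointwise dominated by any iterate of $\mathrm M_{\mathsf s}$, and the lemma's conclusion fails with $\upsilon=3$. Your fan-in telescoping has the same defect: in the second summand $\kappa^-_{(1,3)}(1-\kappa^-_{(2,3)})p(2^{-t}\xi_1)$ the constraint from $\kappa^-_{(1,3)}$ gives only $|\xi_3|\lesssim 2^t$, not $|\xi_3|\sim 2^t$, so $\partial_{\xi_2}(1-\kappa^-_{(2,3)})$ is again uncontrolled on the box.

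The paper's cure is structural: when $\#U=2$ one always picks $\upsilon$ to be the \emph{common} index of the two pairs (the middle vertex of the chain, the sink of the fan-in, the source of the fan-out). The key fact you are not using is that $\nabla\kappa^{\eps}_{\sigma}$ is supported on the wedge $\widetilde\Psi_{\sigma,\ell_\sigma}$ where $|\xi_{\sigma(1)}|\sim|\xi_{\sigma(2)}|$; hence if $\upsilon\in\sigma$, the product $\kappa^\eps_\sigma\,q(2^{-t}\xi_\upsilon)$ is (up to an identity piece) a genuine Schwartz bump in $(\xi_{\sigma(1)},\xi_{\sigma(2)})$ at scale $2^t$. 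With $\upsilon$ common to both $\sigma$'s, each $K_\sigma Q_t^\upsilon$ is a nice two-variable convolution, and the \emph{composition} of the two is what produces $\mathrm M_{\mathsf s}^2$ --- not a single three-dimensional kernel bound. So the fix is to change the rule to ``$\upsilon=$ common coordinate'' and to estimate the two factors as a composition rather than seeking a single box-Mihlin symbol.
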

%%%%%%%%%%%%%%%%%%%%%%%%%%%%%% LEMMA LEMMA LEMMA

%%%%%%%%%%%%%%%%%%%%%%%%%%%%%% PROOF PROOF PROOF
\begin{proof}  As ${\cic{\eps}}, {\cic{\ell}}$ are fixed throughout the proof, and in order  to avoid proliferation of indices, we shall write below
\[
\kappa_{\sigma,\ell_\sigma}^{\eps_\sigma}=\kappa_\sigma,\qquad  K_{\sigma,\ell_\sigma}^{\eps_\sigma}=K_\sigma,  \qquad K_{U,{\cic{\ell}}}^{{\cic{\eps}} } = K_U,
\]
when these parameters are unimportant. As we are working with the strong maximal function, by rescaling on the sphere we may assume  $  \ell_\sigma=0$ for all $\sigma \in \Sigma$; this is just for convenience of notation as we shall see. We divide the proof to different cases according to the cardinality of the set $U\subseteq \Sigma.$

\subsubsection*{Case $\#U=1$} In this case there exists $\sigma\in\Sigma(3)$ such that $U=\{\sigma\}$, $K_{U }=K_{\sigma}$, and we may choose either $\upsilon =\sigma(1)$ or $\upsilon=\sigma(2)$. The choice does not depend on the quadrant $Q$. To fix ideas, we work with $\sigma=(1,2)$ and   choose $\upsilon=1$.
By the observation \eqref{eq:support} of Remark \ref{rem:IMPORTANT}, we know that $ \kappa_{\sigma}  $ is constant in a neighborhood of $\xi$ unless $\xi \in \widetilde \Psi_{\sigma,0}$, in which case $|\xi_{\sigma(1)}|\sim |\xi_{\sigma(2)}| $. Therefore if $\xi\in\widetilde \Psi_{\sigma,0}$, and $2^{t-2}<|\xi_1|<2^{t+2}$,  there holds $2^t\sim|\xi_{\upsilon}|\sim |\xi_{\sigma(2)}| $ and 
\[
 \big| \partial_{\xi_{\sigma(1)}}^{\alpha_1} \partial_{\xi_{\sigma(2)}}^{\alpha_2} \kappa_{\sigma}(\xi)\big| \lesssim |{\xi_{\sigma(1)}}|^{-\alpha_1} | {\xi_{\sigma(2)}}|^{-\alpha_2}  \lesssim 2^{-t\alpha}, \qquad \alpha=\alpha_1+\alpha_2.
\]
Using the above inequality for $\alpha=0,\ldots, 10\cdot 3$, it follows that  
\begin{equation}
\label{eq:2dker}
\Phi_{\sigma}(x_{\sigma(1)},x_{\sigma(2)})\coloneqq \int_{\R^2} \kappa_{\sigma }(\xi_{\sigma(1)},\xi_{\sigma(2)}) q(2^{-t}\xi_{\upsilon}) \e^{i(x_{\sigma(1)}\xi_{\sigma(1)}+x_{\sigma(2)}\xi_{\sigma(2)})} \, \d \xi_{\sigma(1)}\d\xi_{\sigma(2)}
\end{equation}
satisfies
\begin{equation} \label{eq:Ksigma}  
|\Phi_{\sigma}(x_{\sigma(1)},x_{\sigma(2)})| \lesssim {2^{2t}}\left(  1 + 2^{t}|x_{\sigma(1)}| +2^{t}|x_{\sigma(2)}| \right)^{-(3+1)}.
\end{equation}
We now write $f_t=P_{t}^{\upsilon} f$. Denoting convolution in the variables $\sigma(1),\sigma(2)$ by  $*_{\sigma}$  we have that
\[
K_{\sigma }f_t= (K_{\sigma}Q_{t}^{\upsilon})(f_t) = \Phi_{\sigma}*_\sigma f_t.
\]
Hence using \eqref{eq:Ksigma} we see that
\[
|K_{U} f_t(x)| \leq \int_{\R^2} |f_t(x_1-y_1,x_2-y_2,x_3)| |\Phi_{\sigma}(y_1,y_2)| \, \d y \lesssim \mathrm M_{\mathsf{s}} ( f_t) (x)
\]
as claimed.

\subsubsection*{Case $\#U=2$} In this case $U=\{\sigma,\tau\}$ for some $\sigma,\tau\in\Sigma(3)$ and necessarily  $\sigma$, $\tau$ must have a common component. We choose $\upsilon$ to be this common component. This choice also does not depend on the quadrant $Q$. To fix ideas $\sigma=(1,2),\tau=(1,3)$ and we  choose  $\upsilon=1$. Note that in this case $K_{U}=K_{(1,2)}K_{(1,3)} $. With the same  notation of \eqref{eq:2dker} from  the previous case we have the equality 
\[
K_{U } f_t = \left(K_{(1,2)} Q_{t}^{\upsilon}\right) \circ \left(K_{(2,3)} Q_{t}^{\upsilon}\right) (f_t)= \Phi_{(1,2)} *_{(1,2)} \Phi_{(1,3)} *_{(1,3)} f_t
\]
so   using \eqref{eq:Ksigma} again we see that
\[
\begin{split}
|K_{U }  f_t(x)| &\leq \int_{\R^2\times \R^2} |f_t(x_1-y_1-z_1,x_2-y_2,x_3-z_3)| |\Phi_{(1,2)}(y_1,y_2)| |\Phi_{(1,3)}(z_1,z_3)|\, \d y  \d z \\ & \lesssim \mathrm M^2 _{\mathsf{s}} ( f_t) (x)
\end{split}
\]
as claimed.

\subsubsection*{Case $\#U=3$}
We show that this case reduces to the preceding ones, with choice of $\upsilon$ depending on the quadrant $Q$. Let $$Q_{\sigma}=\{\xi \in \R^3:\,\xi_{\sigma(1)} \xi_{\sigma(2)}\geq 0\}.$$
Notice that the constraints on the supports of $\phi^\pm$ imply that
\[
\kappa_{\sigma,\ell_\sigma  }^-\cic{1}_{Q_{\sigma}} \equiv 0,  \qquad \kappa_{\sigma,\ell_\sigma}^+\cic{1}_{Q_{\sigma}} \equiv 1, \qquad \forall \sigma \in \Sigma.
\]
As for each of the 8 quadrants $Q$ of $\R^3$ there exists (at least one) $\sigma_Q \in \Sigma$ such that $Q\subset Q_{\sigma_Q}$, we see that
\[
K_{U,{\cic{\ell}}}^{{\cic{\eps}} }\,\cic{1}_{Q} =\begin{cases}0,& \quad\text{if}\quad \exists\sigma\in U\quad\text{with}\quad \eps_\sigma=-, 
\\
K_{U\setminus \{\sigma_Q\},{\cic{\ell}}}^{{\cic{\eps}} }\,\cic{1}_{Q},&\quad{\text{otherwise}}.
\end{cases}
\]
As $\#\{U\setminus\{\sigma_Q\}\}=2$ for each quadrant $Q$ the proof follows by the cases $\#U\in\{1,2\}$ considered above.
\end{proof} 
%%%%%%%%%%%%%%%%%%%%%%%%%%%%%% PROOF PROOF PROOF

%%%%%%%%%%%%%%%%%%%%%%%%%%%%%% SECTION SECTION SECTION
\section{Weighted norm inequalities for directional maximal operators} \label{sec:wnidmo} We dedicate this section to the discussion of weighted norm inequalities for the  maximal  directional operator. These will serve as a tool for the proof of Theorem \ref{th:mainintro}; in fact, they will be used to prove a weighted almost orthogonality principle that subsumes both  Theorem \ref{th:mainintro} and its weighted analogue, which will be stated at the end of this section. However, we do think they are also of independent interest. 

The  weighted theory of the directional maximal operator has been studied, at least in the two-dimensional case, in \cite{DuoMoy}, for the case of 1-lacunary sets of directions. Here we recall all the basic definitions and tools, and then proceed to prove weighted norm inequalities for the directional maximal function $M_\Omega$ associated to a finite order lacunary set $\Omega\subset S^{n-1}$. In essence, the main result of this section, Theorem \ref{thm:weightedM}, is a weighted generalization of the main result of \cite{PR2} by Parcet and Rogers.
%%%%%%%%%%%%%%%%%%%%%%%%%%%%%% SECTION  SECTION SECTION
\subsection{Directional $A_p$ weights} We begin by defining the appropriate directional $A_p$ classes. The easiest way to define the appropriate class is to ask for non-negative, locally integrable functions $w$ (we will refer to such functions as weights) such that for all nice functions $f$ we have
\[
\|\M_{\Omega}f \|_{L^p(w)}\lesssim \|f\|_{L^p(w)},\qquad \|f\|_{L^p(w)}\coloneqq \Big(\int|f|^pw\Big)^\frac{1}{p},\qquad 1<p<\infty,
\]
where $\Omega$ is a set of directions such that $\M_{\Omega}$ is bounded on $L^p(\R^n)$. Without explicit mention, we work under the purely qualitative assumptions that all weights appearing below will be continuous  and nonvanishing functions on $\R^n$; this assumption may be removed via a standard approximation procedure which we omit.
 We will very soon specialize to sets $\Omega$ which are lacunary of finite order so we encourage the reader to keep this example in mind. Note that for smooth functions $f$ we have $\M_{\widebar{ \Omega}} f = \M_{\Omega}f $. We can then assume that $\Omega$ is closed when deriving necessary conditions for $w$. 
 
For   ${\omega}\in\Omega$, $x\in\R^n$ and $\eta>0$ we then define segments and corresponding one-dimensional averages of $ f\in \mathcal C(\R^d) $ as follows

\[
I({x},\eta,{\omega})\coloneqq\{{x}+ t{\omega}:\,|t|<\eta\},\qquad \l f \r_{I({x},\eta,{\omega})} \coloneqq \frac{1}{2\eta} \int_{-\eta}^\eta f(  x+ t{\omega}) \, \d t. 
\]
We set  $\I_\Omega\coloneqq \{I({x},\eta,{\omega}):\, {x}\in \R^d,\, \eta>0,{\omega}\in \Omega\}$ and for $p\in(1,\infty)$ we adopt the usual notation for the dual weight $\sigma\coloneqq w^{-\frac{1}{p-1}}$. Now the $L^p(\R^n)$-boundedness of $\M_\Omega$ clearly implies the boundedness of $\M_{{\omega}}$ on $L^p(I({x},{\omega}))$, where $I({x},{\omega})\coloneqq \{{x}+ t{\omega}:\,t\in\R\}$, uniformly in $x\in\R^n$ and ${\omega}\in\Omega$. Now testing this one-dimensional boundedness property $\M_{{\omega}}$ for some fixed $p\in(1,\infty)$ against functions of the form $\sigma\ind_{I({x},\eta,{\omega})}$ shows the necessity of the directional $A_p$ condition
\[
[w]_{A_p ^\Omega}\coloneqq\sup_{I\in\I_\Omega}\int_I w \, \Big(\int_I \sigma\Big)^{p-1}<\infty;
\]
here we remember that we have made the qualitative assumption that $w$ is a continuous non-vanishing function.

Note that if we write $w(x)=w(x\cdot {\omega},x\cdot{\omega} ^\perp)$, the previous condition means that for almost every $x\in\R^n$ and ${\omega}\in\Omega$, the one-dimensional weight $v_{x,{\omega}}(s)\coloneqq w(s,x\cdot {\omega}^\perp)$, $s\in\R$, is in $A_p(\R)$, with uniformly bounded $A_p$ constant:
\[
\sup_{x\in\R^n,\, {\omega}\in\Omega}[v_{x,{\omega}}]_{A_p}= [w]_{A_{p} ^\Omega}<\infty.
\]
We complete the set of definitions by defining $A_{1}^\Omega$ to be the class of weights $w$ such that 
\[
[w]_{A_1^\Omega} \coloneqq \sup_{x\in\R^n} \frac{\M_{\Omega}w(x)}{w(x)}<\infty.
\]
A well known class of Muckenhoupt weights is produced be considering $\Omega=\{ {e_1},\ldots,  {e_n}\}$; then $A_p ^\Omega$ is just the class $A_p ^*$ of \emph{strong} or \emph{$n$-parameter} Muckenhoupt weights. We also note that an obvious corollary of one dimensional theory is that 
\[
\|\M_{{\omega}}\|_{L^p(w)\to L^p(w)}\lesssim [w]_{A_p ^\Omega}^\frac{1}{p-1},\qquad {\omega}\in\Omega,
\]
and the implicit constant is independent of $w$ and ${\omega}$. We refer to \cite{Buckley} for the sharp one-dimensional weighted bound for $\M_{{\omega}}$.

%%%%%%%%%%%%%%%%%%%%%%%%%%%%%% SECTION  SECTION SECTION
\subsection{Extrapolation for $A_p ^\Omega$ weights}\label{sec:extrap} Having established the appropriate $A_p ^\Omega$ classes,  we now proceed to proving one of the most useful properties of weighted norm inequalities, that of extrapolation.

We begin by noting that, as in the case of classical $A_p$ weights, it is easy to create $A_p ^\Omega$-weights by using the Rubio de Francia method and factorization; see \cite[Lemmata 2.1,2.2]{Duo2011}. We omit the  proofs which are essentially identical to the one-directional case.

%%%%%%%%%%%%%%%%%%%%%%%%%%%%%% LEMMA LEMMA LEMMA
\begin{lemma} \label{lemma:extr1} Let $w\in A_p^\Omega$. For a nonnegative function $g\in L^p(w)$ we define  
\[
Eg\coloneqq \sum_{k=0}^\infty \frac{\mathrm{M}_\Omega^{(k)} g}{2^{k}\|\mathrm{M}_\Omega \|_{L^p(w)}^k}
\] 
Then $Eg$ satisfies the following properties
\begin{enumerate}
	\item[(i)] $\displaystyle g\leq E_g$.
	\item [(ii)] For every $g\in L^p(w)$ we have $\displaystyle \|Eg\|_{L^p(w)}\leq 2\|g\|_{L^p(w)}$.
	\item [(iii)] If $\|\M_\Omega\|_{L^p(w)\to L^p(w)}<\infty$ then $Eg$ is an $A_1 ^\Omega$ weight with constant \[[Eg]_{A_1^\Omega }\leq 2\|\mathrm{M}_\Omega \|_{L^p(w)\to L^p(w)}.\]
\end{enumerate}
Furthermore for all exponents $1\leq p<\infty, 1<p_0<\infty$  and weights $u,w$ there holds
 \[
 \begin{cases}
 [wu^{p-p_0}]_{A_{p_0}^\Omega} \leq  [w ]_{A_{p}^\Omega}  [u ]_{A_{1}^\Omega}, & p\leq p_0 
 \\ 
 &
 \\ 
  \left[w^{\frac{p_0-1}{p-1}}u^{\frac{p-p_0}{p-1}}\right]_{A_{p_0}^\Omega} \leq  [w ]_{A_{p}^\Omega}^{\frac{p_0-1}{p-1}}  [w ]_{A_{p}^\Omega}^{\frac{p-p_0}{p-1}}, & p>p_0.
 \end{cases}
 \]
\end{lemma}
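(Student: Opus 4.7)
The plan is to follow the classical Rubio de Francia extrapolation scheme, adapted verbatim to the directional setting. The structural point that makes everything work is that $\M_\Omega$ is sublinear and positive homogeneous on nonnegative functions, and by hypothesis has finite $L^p(w)$ operator norm $\Lambda \coloneqq \|\M_\Omega\|_{L^p(w)\to L^p(w)}$. Consequently the series defining $Eg$ converges absolutely in $L^p(w)$ (which, combined with the standing continuity/non-vanishing assumption, ensures $Eg$ is pointwise finite), and by monotone convergence $\M_\Omega$ can be applied termwise.

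For the three properties of $Eg$: item (i) is immediate since $\M_\Omega^{(0)}g = g$ appears as the $k=0$ summand and every other term is nonnegative. Item (ii) follows by Minkowski's inequality and the geometric bound $\|\M_\Omega^{(k)}g\|_{L^p(w)}\leq \Lambda^k \|g\|_{L^p(w)}$, which telescopes into $\sum_{k\geq 0} 2^{-k}\|g\|_{L^p(w)} = 2\|g\|_{L^p(w)}$. Item (iii) is a one-line reindexing argument using sublinearity of $\M_\Omega$:
\[
\M_\Omega(Eg) \;\leq\; \sum_{k\geq 0}\frac{\M_\Omega^{(k+1)}g}{2^{k}\Lambda^{k}} \;=\; 2\Lambda\sum_{j\geq 1}\frac{\M_\Omega^{(j)}g}{2^{j}\Lambda^{j}} \;\leq\; 2\Lambda\,Eg,
\]
which gives $[Eg]_{A_1^\Omega}\leq 2\Lambda$ directly from the definition of the $A_1^\Omega$ constant.

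For the factorization inequalities, I would exploit the characterization of $A_p^\Omega$ via uniform one-dimensional $A_p$ bounds on the restrictions $v_{x,\omega}(s) = w(s, x\cdot\omega^\perp)$ of $w$ to a.e.\ line along each $\omega\in\Omega$, as recorded in the preceding subsection. This reduces both displayed inequalities to the classical scalar factorization lemmas for one-dimensional $A_p$ weights (Jones factorization for $p\leq p_0$, and its dual form via the $A_p \leftrightarrow A_{p'}$ duality for $p > p_0$), applied uniformly on each such line. Taking the supremum over all segments $I\in\I_\Omega$ preserves the pointwise bound on $[\,\cdot\,]_{A_{p_0}^\Omega}$, since the same family of testing segments governs both the hypothesis and the conclusion.

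The arguments above are all routine transcriptions of their scalar counterparts, and the main thing to verify--that the one-dimensional $A_p$ tests constituting $A_p^\Omega$ are compatible with the factorization algebra--is essentially automatic once one observes that in the product $wu^{p-p_0}$ (or its Hölder-dual form) the two factors are restricted along the \emph{same} line and hence multiply as scalar weights there. I therefore expect no substantive obstacle; the only minor care needed is the aforementioned qualitative reduction ensuring $Eg$ is finite a.e., which is already implicit in the $L^p(w)$ bound of item (ii).
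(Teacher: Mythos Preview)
Your proposal is correct and follows exactly the classical Rubio de Francia scheme that the paper invokes by reference to \cite[Lemmata 2.1, 2.2]{Duo2011}; the paper itself omits the proof as ``essentially identical to the one-directional case,'' which is precisely the reduction you carry out. One terminological quibble: the factorization inequalities here are not Jones factorization (which decomposes an $A_p$ weight into $A_1$ pieces) but rather the multiplicative closure properties proved directly via H\"older's inequality on each segment $I\in\I_\Omega$---your line-by-line reduction handles this correctly regardless.
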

%%%%%%%%%%%%%%%%%%%%%%%%%%%%%% LEMMA LEMMA LEMMA
We now provide the basic extrapolation result for $A_p ^\Omega$ weights which will be our main tool for passing from $L^2(w)$estimates to  $L^p(w)$ estimates for all $p\in (1,\infty)$. This result and its proof are completely analogous to \cite[Theorem 3.1]{Duo2011}, making use of Lemma \ref{lemma:extr1} as the analogous of \cite[Lemmata 2.1, 2.2]{Duo2011}.

%%%%%%%%%%%%%%%%%%%%%%%%%%%%%% LEMMA LEMMA LEMMA
\begin{lemma}\label{lem:Apextrap} Let $\Omega\subset S^{n-1}$ be a set of directions such that for all $1<p<\infty$ and for all $w\in A_p ^\Omega$ we have the weighted boundedness property $\M_\Omega:L^p(w)\to L^p(w)$. Assume that for some family of pairs of nonnegative functions, $(f, g)$, for some $p_0\in[1,\infty]$ and for all $w\in A_{p_0}$ we have
\[
\Big(\int_{\R^n} g^{p_0}w\Big)^\frac{1}{p_0}\leq C \mathcal P([w]_{A_{p_0} ^\Omega})\Big(\int_{\R^n} f^{p_0} w\Big )^\frac{1}{p_0},
\]
where $\mathcal P:\R_+\to \R_+$ is a an increasing function and $C>0$ does not depend on $w$ or the pairs $(f,g)$. Then for all $1<p<\infty$ we have
\[
\Big(\int_{\R^n} g^{p }w\Big)^\frac{1}{p }\leq C \mathcal K(w)\Big(\int_{\R^n} f^{p } w\Big )^\frac{1}{p },
\]
with
\[
\mathcal K(w) \coloneqq \begin{cases} \mathcal P([w]_{A_p ^\Omega}(2\|\M_\Omega\|_{L^p(w)})^{p_0-p}),&\quad p<p_0,
\\
\mathcal P([w]_{A_p ^\Omega} ^\frac{p_0-1}{p-1}(2\|\M_\Omega\|_{L^{p'}(\sigma)})^\frac{p-p_0}{p-1}), &\quad p>p_0.
\end{cases}
\]
\end{lemma}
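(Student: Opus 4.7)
The plan is to follow Rubio de Francia's classical extrapolation argument, in the form presented in \cite[Theorem 3.1]{Duo2011}, replacing its one-parameter components with the $\Omega$-directional analogs furnished by Lemma \ref{lemma:extr1}. The key inputs from that lemma are: on the one hand, the iteration operator $E$ that manufactures nonnegative weights which dominate pointwise, have $L^p(w)$-norm comparable to the input, and are $A_1^\Omega$ with constant controlled by $\|\M_\Omega\|_{L^p(w)\to L^p(w)}$; on the other, the factorization bounds that build an $A_{p_0}^\Omega$ weight out of an $A_p^\Omega$ weight multiplied by a suitable power of an $A_1^\Omega$ weight. Throughout, one works under the standing qualitative assumption (continuous nonvanishing weights, $f,g$ smooth with compact support) which reduces the argument to finite, nonzero quantities; a standard density argument then removes this at the end.

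The two regimes $p<p_0$ and $p>p_0$ are handled symmetrically, with the only difference being on which side of the duality the Rubio de Francia iteration is applied. For $p<p_0$, given a nonnegative test function on the appropriate dual side, one applies $E$ at exponent $p$ to produce $U$ with $U\geq h$, $\|U\|_{L^p(w)}\leq 2\|h\|_{L^p(w)}$, and $[U]_{A_1^\Omega}\leq 2\|\M_\Omega\|_{L^p(w)\to L^p(w)}$. The factorization estimate from the end of Lemma \ref{lemma:extr1} then yields that $\tilde w \coloneqq w U^{p-p_0}$ lies in $A_{p_0}^\Omega$ with
\[
[\tilde w]_{A_{p_0}^\Omega}\leq [w]_{A_p^\Omega}\bigl(2\|\M_\Omega\|_{L^p(w)\to L^p(w)}\bigr)^{p_0-p},
\]
which is precisely the argument of $\mathcal P$ appearing in $\mathcal K(w)$ in the statement. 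Applying the hypothesis at $p_0$ with $\tilde w$ and unraveling via Hölder with the conjugate pair $(p_0/p, p_0/(p_0-p))$, using $U\geq h$ to pass from $\tilde w$-integrals back to $w$-integrals, yields the sought inequality with the stated $\mathcal K(w)$.

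The case $p>p_0$ is dual. One passes to the dual weight $\sigma\coloneqq w^{-1/(p-1)}\in A_{p'}^\Omega$ and applies Rubio de Francia at exponent $p'$, where the iteration now introduces the factor $\|\M_\Omega\|_{L^{p'}(\sigma)\to L^{p'}(\sigma)}$ that accounts for the asymmetry in $\mathcal K(w)$; the second line of the factorization inequality in Lemma \ref{lemma:extr1} is the one invoked here. In both regimes, the task is essentially bookkeeping in Hölder and factorization exponents; the only genuine ``content'' that is not already encoded in Lemma \ref{lemma:extr1} is the verification that the exponents combine correctly so that the right-hand side collapses to a single factor $\|f\|_{L^p(w)}$ with the announced $\mathcal K(w)$ in front. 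This is therefore the main technical obstacle, but it is the same arithmetic that appears verbatim in \cite[Theorem 3.1]{Duo2011}, transported to the $A_p^\Omega$ setting through Lemma \ref{lemma:extr1}.
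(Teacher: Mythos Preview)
Your proposal is correct and matches the paper's approach exactly: the paper itself gives no detailed proof, stating only that ``this result and its proof are completely analogous to \cite[Theorem 3.1]{Duo2011}, making use of Lemma \ref{lemma:extr1} as the analogous of \cite[Lemmata 2.1, 2.2]{Duo2011},'' which is precisely the strategy you outline. If anything, your sketch of the Rubio de Francia iteration and the factorization bookkeeping in the two regimes $p\lessgtr p_0$ is more explicit than what the paper provides.
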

%%%%%%%%%%%%%%%%%%%%%%%%%%%%%% LEMMA LEMMA LEMMA

%%%%%%%%%%%%%%%%%%%%%%%%%%%%%% SECTION  SECTION SECTION
\subsection{Weighted inequalities for the lacunary directional maximal operator}
In this subsection, we consider directional maximal operators associated to lacunary sets of order $L$. 

According to the previous discussion, the condition $w\in A_p ^\Omega$ is necessary for the boundedness property $\M_{\Omega}:L^p(w)\to L^p(w)$. In this paragraph we also show the sufficiency of condition $A_p ^\Omega$, thus giving a characterization of the $A_p ^\Omega$ class in terms of $\M_{\Omega}$. 

%%%%%%%%%%%%%%%%%%%%%%%%%%%%%% THEOREM THEOREM THEOREM
\begin{theorem}\label{thm:weightedM} Let $\Omega\subset S^{n-1}$ be a  lacunary set of directions of order $L$, where $L$ is a positive integer, and $w$ be a weight. For every $1<p<\infty$, the following are equivalent.
\begin{enumerate}
		\item [(i)] For all $f\in L^p(w)$ we have $\displaystyle \|\M_{\Omega}f\|_{L^p(w)}\lesssim   \|f\|_{L^p(w)}$, with implicit constant depending on $w$, the dimension, and the lacunarity constants of $\Omega$.
		\item[(ii)] We have that $w\in A_p ^\Omega$.
\end{enumerate}
Furthermore, if $w\in A_p ^\Omega $ then we have the estimate $\|\M_\Omega\|_{L^p(w)\to L^p(w)}\lesssim [w]_{A_p ^\Omega} ^{\delta L}$ for some exponent $\delta=\delta(p,n)>0$ and implicit constant independent of $w$.
\end{theorem}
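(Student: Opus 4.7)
The implication (i)$\Rightarrow$(ii) is routine testing: for $I = I(x_0, \eta, \omega) \in \I_\Omega$, apply (i) to $f = \sigma \ind_I$ and observe that $M_\omega f(y) \geq \langle \sigma \rangle_I$ pointwise on $I$. Then
\[
\langle w \rangle_I^{1/p}\,\langle \sigma \rangle_I \,|I|^{1/p}\le \|M_\omega f\|_{L^p(w)}\lesssim \|f\|_{L^p(w)} =\langle \sigma \rangle_I^{1/p}\,|I|^{1/p},
\]
which, after rearranging, yields $[w]_{A_p^\Omega}<\infty$ with constant of order the operator norm of $\M_{\Omega}$.

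The substantive direction is (ii)$\Rightarrow$(i) with quantitative bound $[w]_{A_p^\Omega}^{\delta L}$, and my plan proceeds in two stages. First, extrapolate: by Lemma \ref{lem:Apextrap} with $p_0 = 2$, it suffices to establish the $L^2(w)$-estimate
\[
\|\M_\Omega f\|_{L^2(w)} \le C_n\, [w]_{A_2^\Omega}^{\delta L}\,\|f\|_{L^2(w)} \qquad \forall\, w \in A_2^\Omega,
\]
and then the full $L^p(w)$ conclusion for every $1<p<\infty$ follows with the polynomial dependence on $[w]_{A_p^\Omega}$ prescribed by $\mathcal K(w)$. Second, induct on the lacunary order $L$. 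For the base case $L=0$, $\Omega = \{\omega\}$ is a single direction, and $w \in A_2^\omega$ means that the one-dimensional restrictions $v_{x,\omega^\perp}$ are uniformly $A_2(\R)$; Buckley's sharp one-dimensional bound combined with Fubini on lines parallel to $\omega$ yields $\|\M_\omega\|_{L^2(w)\to L^2(w)} \lesssim [w]_{A_2^\omega}$, giving $\delta = 1$ at level zero.

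For the inductive step, fix a dissection of $\Omega$ with basis $\mathcal B \subset \Omega$ and sectors $\Omega_{\sigma,\ell}$ that are lacunary of order $L-1$. The central weighted single-scale estimate that I would establish --- the weighted Parcet--Rogers almost orthogonality principle --- is the pointwise/norm bound
\[
\|\M_\Omega f\|_{L^2(w)} \lesssim [w]_{A_2^\Omega}^{\delta_0}\bigg( \|\M_{\mathsf s}f\|_{L^2(w)} + \sum_{\sigma \in \Sigma} \Big\|\Big(\sum_{\ell\in \mathbb Z} |\M_{\Omega_{\sigma,\ell}} f|^2\Big)^{1/2}\Big\|_{L^2(w)}\bigg),
\]
where $\M_{\mathsf s}$ denotes the strong maximal function in the basis $\mathcal B$ and $\delta_0$ is a dimensional constant. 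The first summand is controlled by $[w]_{A_2^\ast}^{O(1)}\|f\|_{L^2(w)}$ via the classical weighted strong maximal inequality, noting $[w]_{A_2^\ast}\le [w]_{A_2^\Omega}$ since $\mathcal B \subset \Omega$. Each square function in $\sigma$ is handled via a weighted Fefferman--Stein vector-valued inequality for $\M_{\Omega_{\sigma,\ell}}$, combined with the inductive hypothesis applied on each sector of order $L-1$; since $[w]_{A_2^{\Omega_{\sigma,\ell}}}\le[w]_{A_2^\Omega}$, this produces a factor $[w]_{A_2^\Omega}^{\delta(L-1)}$. Summing, the recursion $\delta\cdot L = \delta(L-1) + \delta_0 + O(1)$ closes by choosing $\delta$ sufficiently large.

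The main obstacle is the single-scale weighted almost orthogonality step: Parcet and Rogers implement theirs via Plancherel on the Fourier side of a smoothed single-scale directional average, and transferring this to $L^2(w)$ requires replacing each Plancherel identity by a weighted Littlewood--Paley inequality executed coordinate by coordinate along the basis $\mathcal B$. This is legitimate precisely because $\mathcal B \subset \Omega$, so that the one-dimensional Littlewood--Paley constants involved depend only on directional $A_2^\Omega$ data; tracking constants carefully shows that each application costs a fixed polynomial factor in $[w]_{A_2^\Omega}$, which is exactly what produces the $[w]_{A_2^\Omega}^{\delta_0}$ loss per level of induction and, compounded over the $L$ levels, the final $[w]_{A_p^\Omega}^{\delta L}$ dependence after extrapolation.
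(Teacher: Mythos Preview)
Your extrapolation step is circular. Lemma~\ref{lem:Apextrap} takes as a \emph{hypothesis} that $\M_\Omega:L^p(w)\to L^p(w)$ for all $w\in A_p^\Omega$, because the Rubio de Francia construction driving it (Lemma~\ref{lemma:extr1}) is built from iterates of $\M_\Omega$ itself; the extrapolated constant $\mathcal K(w)$ depends explicitly on $\|\M_\Omega\|_{L^p(w)}$ or $\|\M_\Omega\|_{L^{p'}(\sigma)}$. You therefore cannot invoke it to \emph{establish} that boundedness, and a bootstrap through finite subsets does not close either, since the output constant still contains the unknown norm. The paper avoids this entirely by proving the recursion
\[
\|\M_\Omega\|_{L^p(w)\to L^p(w)}\lesssim [w]_{A_p^*}^\delta \sup_{\sigma\in\Sigma}\sup_{\ell\in\mathbb Z}\|\M_{\Omega_{\sigma,\ell}}\|_{L^p(w)\to L^p(w)}
\]
directly for every $1<p<\infty$, using Lemmata~\ref{lem:splitw} and~\ref{lem:PRlemmaw} (both stated and proved for all $p$ and $w\in A_p^*$), with a separate interpolation/bootstrap argument on finite subsets when $p<2$.

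Your displayed almost-orthogonality inequality is also wrong as written. Since $\M_{\Omega_{\sigma,\ell}}f\geq |f|$ pointwise whenever $\Omega_{\sigma,\ell}\neq\varnothing$, and a lacunary set of positive order has infinitely many nonempty sectors, the inner sum $\sum_{\ell\in\mathbb Z}|\M_{\Omega_{\sigma,\ell}}f|^2$ diverges for every nonzero $f$; your right-hand side is identically $+\infty$. The orthogonality in the Parcet--Rogers scheme does not come from the sector maximal operators but from Fourier multipliers: Lemma~\ref{lem:splitw} bounds $\M_\Omega f$ by $\sup_{{\cic{\ell}}}\M_{\Omega_{{\cic{\ell}}}}(K_{U,{\cic{\ell}}}f)$, and it is the square function $(\sum_{{\cic{\ell}}}|K_{U,{\cic{\ell}}}f|^2)^{1/2}$ of the \emph{multiplier} pieces that is bounded (Lemma~\ref{lem:PRlemmaw}, via the weighted Marcinkiewicz theorem). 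The sector maximal operators then act on these already-decoupled pieces, and at $p=2$ one simply pulls the $\ell^2$ sum outside. Without this frequency decomposition your induction has no finite quantity to recurse on.
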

%%%%%%%%%%%%%%%%%%%%%%%%%%%%%% THEOREM THEOREM THEOREM

%%%%%%%%%%%%%%%%%%%%%%%%%%%%%% REMARK REMARK REMARK
\begin{remark} 
We note here that in dimension $n=2$ and for $L=1$ this theorem was known and contained in \cite[Theorem 4]{DuoMoy}. 
\end{remark}
%%%%%%%%%%%%%%%%%%%%%%%%%%%%%% REMARK REMARK REMARK

%%%%%%%%%%%%%%%%%%%%%%%%%%%%%% SECTION SECTION SECTION
\subsection{Proof of Theorem~\ref{thm:weightedM}} 
 Recall from \S\ref{sec:lacsetup} that a set $\Omega\subset S^{n-1}$ is called lacunary of order $L$, where $L\geq 1$ is a positive integer, if there exists a dissection as in \eqref{lacdiss} such that for each $\sigma\in\Sigma(n)$ and $\ell \in \N$, the sets $\Omega_{\sigma,\ell}$ are lacunary of order $L-1$. As metnioned before, cf. Remark~\ref{rem:2-j}, we assume that $\Omega$ is closed and that the axes $\{ {e_1},\ldots, {e_n}\}$ of the dissection of order $L$  are contained in $ \Omega$.  Then, the inclusion $A_p ^\Omega \subset A_p ^*$ holds, the latter being the class of strong $A_p$ weights with respect to these coordinate axes. In consequence, the strong maximal function $\mathrm M_{\mathsf{s}}$ is automatically bounded on $L^p(w)$ for $w\in A_p^\Omega$.

As in the proof of \cite[Theorem A]{PR1} we rely on the covering of the singularity hyperplane $\xi \cdot {\omega}=0$ 
 by finitely overlapping unions of two dimensional wedges $\{\Psi_{\sigma, \ell_\sigma} :\, \sigma \in \Sigma\}$ defined in \eqref{eq:wedgesp}, where ${\cic{\ell}}=(\ell_\sigma:\,\sigma \in \Sigma)$ is the unique index in $\mathbb Z^\Sigma$ such that  ${\omega}$ belongs to the cell $\Omega_{{\cic{\ell}}}$.   The core of the proof is contained in the following two lemmata which are weighted versions of the corresponding results from \cite{PR1}.

The first result we need is a weighted analogue of \cite[Lemma 1.1]{PR1}. Note that it does not require the lacunarity assumption on $\Omega$ and the weight class needed is just the usual class of strong Muckenhoupt weights $A_p ^*$.
%%%%%%%%%%%%%%%%%%%%%%%%%%%%%% LEMMA LEMMA LEMMA
\begin{lemma}\label{lem:splitw}Let $p>1$ and $w\in A_p ^*$ be a weight. 
There holds
	\[
	\|\M_{\Omega}f \|_{L^p(w) }\lesssim \mathcal [w]_{A_p ^*} ^\frac{n}{p-1} \sup_{\varnothing \neq U\subseteq \Sigma}\big\|\sup_{{\cic{\ell}}\in\mathbb Z^\Sigma } \M_{\Omega_{{\cic{\ell}}}}   K_{U,{\cic{\ell}}}  f \big\|_{L^p(w)},
	\]
with the implicit constant depending upon dimension and $p$.
\end{lemma}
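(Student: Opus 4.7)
My plan follows the inclusion--exclusion decomposition introduced by Parcet and Rogers \cite[Lemma 1.1]{PR1} for the unweighted case, and combines it with the sharp weighted theory of the strong maximal function. The argument breaks into a combinatorial step, a pointwise domination step, and a purely weighted step.

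\textbf{Step 1: combinatorial splitting.} The partition $\Omega = \bigcup_{\cic{\ell}}\Omega_{\cic{\ell}}$ yields $\M_\Omega f = \sup_{\cic{\ell}}\M_{\Omega_{\cic{\ell}}}f$. The inclusion--exclusion identity
\[
\mathrm{Id} = R_{\cic{\ell}} + \sum_{\varnothing\neq U \subseteq \Sigma}(-1)^{\#U+1} K_{U,\cic{\ell}}, \qquad R_{\cic{\ell}}\coloneqq \prod_{\sigma\in\Sigma}(\mathrm{Id}-K_{\sigma,\ell_\sigma}),
\]
applied to $f$ and followed by $\M_{\Omega_{\cic{\ell}}}$, subadditivity, and $\sup_{\cic{\ell}}$, produces the pointwise estimate
\[
\M_\Omega f(x) \leq \sup_{\cic{\ell}}\M_{\Omega_{\cic{\ell}}}(R_{\cic{\ell}}f)(x) + \sum_{\varnothing\neq U\subseteq\Sigma}\sup_{\cic{\ell}}\M_{\Omega_{\cic{\ell}}}(K_{U,\cic{\ell}}f)(x).
\]

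\textbf{Step 2: pointwise control of the residual.} The heart of the argument is to establish
\[
\sup_{\cic{\ell}}\M_{\Omega_{\cic{\ell}}}(R_{\cic{\ell}}f)(x) \lesssim \mathrm M_{\mathsf s} f(x), \qquad x\in\R^n,
\]
uniformly in $\cic{\ell}$. This is the unweighted content of \cite[Lemma 1.1]{PR1} and the only genuinely analytic step. The mechanism is the geometric inclusion $C_\omega\subseteq \bigcup_\sigma\Psi_{\sigma,\ell_\sigma}$ already established in the proof of Lemma \ref{lemma:represent}: since the symbol of $R_{\cic{\ell}}$ vanishes in a neighborhood of $\bigcup_\sigma\widetilde\Psi_{\sigma,\ell_\sigma}$ by \eqref{eq:support}, the Fourier support of $R_{\cic{\ell}}f$ stays uniformly away from $\omega^\perp$ for every $\omega\in\Omega_{\cic{\ell}}$. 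A kernel decomposition of $R_{\cic{\ell}}$ in the spirit of the proof of Lemma \ref{lemma:annular}, combined with this angular separation, dominates the one-dimensional $\omega$-averages by axis-aligned product averages, which is precisely the strong maximal function.

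\textbf{Step 3: weighted passage.} Taking $L^p(w)$ norms in the Step 1 bound, using Step 2 for the residual, and invoking the weighted estimate
\[
\|\mathrm M_{\mathsf s}\|_{L^p(w)\to L^p(w)} \lesssim [w]_{A_p^*}^{n/(p-1)},
\]
obtained by iterating Buckley's sharp one-dimensional bound \cite{Buckley} in each coordinate axis via Fubini, produces the factor $[w]_{A_p^*}^{n/(p-1)}\|f\|_{L^p(w)}$ from the residual term. Since the sum over $\varnothing\neq U\subseteq \Sigma$ is finite with cardinality $2^{|\Sigma|}-1$ depending only on $n$, it is bounded by a dimensional constant times its supremum in $U$. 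To close the inequality, one more application of the inclusion--exclusion identity together with the residual control from Step 2 gives $\|f\|_{L^p(w)}\lesssim [w]_{A_p^*}^{n/(p-1)}\sup_U\|\sup_{\cic{\ell}}\M_{\Omega_{\cic{\ell}}}(K_{U,\cic{\ell}}f)\|_{L^p(w)}$, which is reabsorbed into the right-hand side of the claim.

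\textbf{Main obstacle.} The genuine difficulty lies in the pointwise residual bound of Step 2, which transports the unweighted Parcet--Rogers covering geometry into our framework; the remaining steps are either combinatorial or a direct application of standard weighted Muckenhoupt theory for the strong maximal function. The residual is dominated by a \emph{positive} sublinear operator, which is precisely the structural feature that makes the $A_p^*$ weighted upgrade routine.
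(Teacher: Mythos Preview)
Your Steps 1--3 follow exactly the paper's strategy: reduce to the pointwise estimates of \cite[Lemma~1.1]{PR1} and replace the unweighted $L^p$ bound for the strong maximal function by Buckley's sharp weighted bound. That part is correct and matches the paper's (very brief) proof.

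The gap is your final ``reabsorption'' in Step~3. The claim
\[
\|f\|_{L^p(w)}\lesssim [w]_{A_p^*}^{n/(p-1)}\sup_{U}\big\|\sup_{\cic{\ell}}\M_{\Omega_{\cic{\ell}}}(K_{U,\cic{\ell}}f)\big\|_{L^p(w)}
\]
is false in general: take $\widehat f$ supported in the open positive orthant $\{\xi_j>0\ \forall j\}$. Then for every $\sigma$ one has $\xi_{\sigma(1)}/\xi_{\sigma(2)}>0$ on $\mathrm{supp}\,\widehat f$, so by the definition of $\phi^-$ the symbol $\kappa_{\sigma,\ell}^\circ$ vanishes there for every $\ell$; hence $K_{U,\cic{\ell}}f\equiv 0$ for all $U,\cic{\ell}$, the right side is zero, and the left side is not. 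Your sketched justification (apply inclusion--exclusion and Step~2 once more) only gives $|f|\lesssim \mathrm M_{\mathsf s}f+\sum_U|K_{U,\cic{\ell}}f|$, which after taking norms returns $\|f\|$ on the right with coefficient $[w]_{A_p^*}^{n/(p-1)}\geq 1$ and is circular. What the argument genuinely produces is
\[
\|\M_\Omega f\|_{L^p(w)}\lesssim [w]_{A_p^*}^{n/(p-1)}\|f\|_{L^p(w)}+C_n\sup_{\varnothing\neq U}\big\|\sup_{\cic{\ell}}\M_{\Omega_{\cic{\ell}}}K_{U,\cic{\ell}}f\big\|_{L^p(w)},
\]
which is the honest weighted analogue of \cite[Lemma~1.1]{PR1} and is all that is actually used downstream (in the proof of \eqref{eq:weightedmain} the right-hand side is ultimately bounded by $\sup_{\sigma,\ell}\|\M_{\Omega_{\sigma,\ell}}\|\cdot[w]^\beta\|f\|$, and the first factor is $\geq 1$, so the extra $\|f\|$ term is absorbed there). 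The paper's statement should be read in this spirit; your attempt to eliminate the $\|f\|$ term at the level of the lemma itself does not work.
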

%%%%%%%%%%%%%%%%%%%%%%%%%%%%%% LEMMA LEMMA LEMMA

%%%%%%%%%%%%%%%%%%%%%%%%%%%%%% PROOF PROOF PROOF
\begin{proof} The proof follows from the arguments in the proof of \cite[Lemma 1.1]{PR1}. Indeed one just needs to note that the corresponding unweighted estimate in \cite{PR1} is proved via the use of pointwise estimates, which of course are independent of the underlying measure, and the boundedness of the strong maximal function $\mathrm M_{\mathsf{s}} f$ on $L^p(\R^n)$. The latter fact is replaced by the observation that $\mathrm M_{\mathsf{s}} $ maps $L^p(w)$ to itself whenever $w\in A_p ^*$, and satisfies the quantitative norm estimate
	\[
\|	\mathrm M_{\mathsf{s}} \|_{L^p(w)\to L^p(w)}\lesssim [w]_{A_p ^*} ^\frac{n}{p-1}.
	\]
Here again we use the one-dimensional sharp weighted estimate for the Hardy-Littlewood maximal operator from \cite{Buckley}.
\end{proof}
%%%%%%%%%%%%%%%%%%%%%%%%%%%%%% PROOF PROOF PROOF

%%%%%%%%%%%%%%%%%%%%%%%%%%%%%% LEMMA LEMMA LEMMA
The second result is a weighted  square function estimate for the angular multipliers $K_{U,{\cic{\ell}}}$ associated to a lacunary dissection of the sphere.
\begin{lemma}\label{lem:PRlemmaw}  Let $1<p<\infty$ and $\Sigma$ corresponding to a given dissection of the sphere. Then for all $w\in A_p ^*$ we have
\[ \sup_{U\subseteq \Sigma }\Big\| \Big(\sum_{{\cic{\ell}} \in \mathbb Z^{U}}  \big|   K_{U,{\cic{\ell}}} f\big|^2\Big)^{\frac12} \Big\|_{L^p(w)} \lesssim \mathcal  [w]_{A_p ^*}^\beta \|f\|_{L^p(w)}.
\]
The implicit constant depends upon dimension $n$, $p$, and $\beta>0$ depends on $p$ and $n$.
\end{lemma}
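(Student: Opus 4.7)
The strategy follows the scheme of \cite[Lemma 1.3]{PR1} but replaces the unweighted Fourier multiplier bound at the core of the argument with a weighted multi-parameter Marcinkiewicz multiplier theorem tailored to the strong Muckenhoupt class $A_p^*$. The three-step outline is as follows.

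The first step is to reduce the $\ell^2$-valued square function bound to a scalar multiplier bound via \textbf{Rademacher randomization}. Applying Khintchine's inequality pointwise and integrating against $w$, one obtains, for an enumeration $\{\epsilon_{\cic{\ell}}\}_{\cic{\ell} \in \mathbb{Z}^U}$ of independent $\pm 1$-valued random signs,
\[
\Big\|\Big(\sum_{\cic{\ell}\in \mathbb{Z}^U} |K_{U,\cic{\ell}} f|^2\Big)^{1/2}\Big\|_{L^p(w)}^p \sim \mathbb{E}_{\epsilon}\Big\|\sum_{\cic{\ell}\in \mathbb{Z}^U} \epsilon_{\cic{\ell}} K_{U,\cic{\ell}} f\Big\|_{L^p(w)}^p.
\]
Therefore it suffices to bound the randomized Fourier multiplier operator
$T_\epsilon$ with symbol
\[
m_\epsilon(\xi) = \sum_{\cic{\ell}\in\mathbb{Z}^U} \epsilon_{\cic{\ell}} \prod_{\sigma\in U} \kappa^\circ_{\sigma,\ell_\sigma}(\xi_{\sigma(1)},\xi_{\sigma(2)})
\]
on $L^p(w)$ uniformly in the signs, with bound $[w]_{A_p^*}^\beta\|f\|_{L^p(w)}$.

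The second step is a \textbf{Marcinkiewicz-type estimate on $m_\epsilon$}. By \eqref{eq:support}, for each $\sigma \in U$ the supports $\widetilde\Psi_{\sigma,\ell_\sigma}$ of $\kappa^\circ_{\sigma,\ell_\sigma}$ overlap boundedly in $\ell_\sigma$, so for every fixed $\xi$ only $O(1)$ multi-indices $\cic{\ell}$ contribute to $m_\epsilon(\xi)$. Combining this bounded-overlap property with the product derivative estimates \eqref{eq:fmoest} applied in the $(\xi_{\sigma(1)},\xi_{\sigma(2)})$-plane for each $\sigma \in U$ (and noting that each coordinate $\xi_j$ lies in at most $n-1$ such planes), one verifies that $m_\epsilon$ satisfies, uniformly in $\epsilon$, the product Marcinkiewicz condition
\[
\sup_{|\alpha|\leq N}\,\sup_{\xi\in\R^n} \prod_{j=1}^n |\xi_j|^{\alpha_j}\,|\partial^\alpha m_\epsilon(\xi)|\lesssim 1
\]
for a dimensional constant $N$.

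The third step is to apply a \textbf{quantitative weighted multi-parameter Marcinkiewicz multiplier theorem} with respect to the class $A_p^*$ of strong Muckenhoupt weights on $\R^n$. This can be derived by iterating the sharp 1D weighted Littlewood-Paley square function bound of \cite{Buckley}-type in each coordinate direction and applying the weighted vector-valued Fefferman-Stein inequality, each instance contributing a power of $[w]_{A_p^*}$; the total yields $\|T_\epsilon f\|_{L^p(w)}\lesssim [w]_{A_p^*}^\beta\|f\|_{L^p(w)}$ for some $\beta=\beta(p,n)>0$ independent of $U$, $\cic{\ell}$ and $\epsilon$, which is precisely what is needed.

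The main technical obstacle is the last step, that is, extracting polynomial dependence on $[w]_{A_p^*}$ from a multi-parameter Marcinkiewicz multiplier theorem. Sharp tracking is not required here: any polynomial $[w]_{A_p^*}^\beta$ will suffice for the applications, so iterating 1D sharp weighted bounds (or extrapolating from a known one-weight estimate via Lemma \ref{lem:Apextrap}) is sufficient.
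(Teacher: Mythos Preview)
Your proposal is correct and follows essentially the same approach as the paper: Rademacher randomization to reduce to a scalar multiplier bound, verification of product Marcinkiewicz-type symbol estimates using the finite overlap of the $\widetilde\Psi_{\sigma,\ell}$ together with \eqref{eq:fmoest}, and an appeal to a weighted multi-parameter Marcinkiewicz multiplier theorem for $A_p^*$ weights. The only cosmetic difference is that the paper simply cites Kurtz \cite[Theorem 3]{Kurtz} for the weighted Marcinkiewicz theorem (and remarks that polynomial dependence on $[w]_{A_p^*}$ follows from inspecting the ingredients, namely weighted vector-valued rectangle projections and multiparameter Littlewood--Paley), whereas you propose to reconstruct that theorem by iterating one-dimensional sharp weighted bounds or by extrapolation.
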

%%%%%%%%%%%%%%%%%%%%%%%%%%%%%% LEMMA LEMMA LEMMA

%%%%%%%%%%%%%%%%%%%%%%%%%%%%%% PROOF PROOF PROOF
\begin{proof} As in the proof of \cite[Lemma 1.2]{PR1} we note that it will be enough to prove the $L^p(w)$-boundedness of the randomized map $T$ given as
	\[  
	f\mapsto  \Big(\sum_{{\cic{\ell}} \in \mathbb Z^{S}}  \eps_{{\cic{\ell}}}  \prod_{\sigma\in U} \kappa^{\circ}_{\sigma,\ell_\sigma}  \hat f \Big)^\vee\eqqcolon (m \hat f)^\vee,
	\]
uniformly over choices of signs $\{\eps_{{\cic{\ell}}}\}_{{\cic{\ell}} \in \mathbb Z^U}$. The unweighted $L^2(\R^n)$-boundedness of this map follows simply by Plancherel and the finite overlap property of the supports $\{\widetilde{\Psi}_{\sigma,\ell}:\, \ell\in\N\}$, which shows that $m\in L^\infty$, uniformly over choices of signs. For $L^p(w)$-bounds, we need an $A_p^*$-weighted version of the standard Marcinkiewicz multiplier theorem. This can be found for example in \cite[Theorem 3]{Kurtz} so the proof of the lemma reduces to checking a number of conditions on averaged derivatives of $m$. In fact these conditions are identical to the hypothesis of the unweighted Marcinkiewicz multiplier theorem, as can be found for example in \cite[p. 109]{stein} and can be verified by using estimates \eqref{eq:fmoest} for each single multiplier $K_{\sigma,\ell}^\circ$.   An inspection of the proof, which relies on the weighted vector valued boundedness of frequency projections on rectangles, and the weighted multiparameter Littlewood-Paley inequalities, shows that there exists a constant $\beta$ depending on $n$ and $p$ such that  $\|T\|_{L^p(w)\to L^p(w)}\lesssim [w]_{A_p ^*} ^\beta$. 
\end{proof}
%%%%%%%%%%%%%%%%%%%%%%%%%%%%%% PROOF PROOF PROOF

We now give the conclusion of the proof of Theorem~\ref{thm:weightedM}.
%%%%%%%%%%%%%%%%%%%%%%%%%%%%%% PROOF PROOF PROOF
\begin{proof}[Conclusion of the proof of Theorem~\ref{thm:weightedM}]   The key step is the estimate
\begin{equation}\label{eq:weightedmain}
	\|\M_\Omega\|_{L^p(w)\to L^p(w)}\lesssim \mathcal   [w]_{A_p ^*} ^\delta \sup_{\sigma\in\Sigma}\sup_{\ell \in \mathbb Z}\|\M_{\Omega_{\sigma,\ell}}\|_{L^p(w)\to L^p(w)},
\end{equation}
for some exponent $\delta>0$ depending on the dimension $n$ and on $p$. Indeed, if $L=1$, each sector $\Omega_{\sigma,\ell}$ contains at most one direction, whence using the well-known  weighted maximal inequality for each such direction
and the obvious inequality  $[w]_{A_p^{\{{\omega}\}}}\leq [w]_{A_p ^\Omega}$ for ${\omega}\in \Omega$, 
\[
\sup_{\sigma\in\Sigma}\sup_{\ell \in \mathbb Z}\|\M_{\Omega_{\sigma,\ell}}\|_{L^p(w)\to L^p(w)}
 \lesssim  \sup_{{\omega}\in \Omega}\|\M_{\{{\omega}\}}\|_{L^p(w)\to L^p(w)}.
\]
Coupling the latter display with \eqref{eq:weightedmain} yields the claimed estimate in Theorem~\ref{thm:weightedM}. 
We now proceed by induction and derive the  $L$-lacunary case assuming the $L-1$ holds true. Estimate \eqref{eq:weightedmain} and the inductive assumption read
\[
	\|\M_\Omega\|_{L^p(w)\to L^p(w)}\lesssim \mathcal   [w]_{A_p ^*} ^\delta \sup_{\sigma\in\Sigma}\sup_{\ell \in \mathbb Z}\|\M_{\Omega_{\sigma,\ell}}\|_{L^p(w)\to L^p(w)} \lesssim  [w]_{A_p ^*} ^\delta \sup_{\sigma\in\Sigma}\sup_{\ell \in \mathbb Z} [w]_{A_p^{\Omega_{\sigma,\ell}}}^{\delta(L-1)} \lesssim [w]_{A_p^{\Omega}}^{\delta L} 
\]
where in the last inequality we have used the obvious fact that  $\sup_{\sigma,\ell}[w]_{A_p^{\Omega_{\sigma,\ell}}}\leq [w]_{A_p^{\Omega}}$ and $[w]_{A_p^*}\leq [w]_{A_p^{\Omega}}$. This completes the proof of the theorem up to showing estimate \eqref{eq:weightedmain} holds true. \end{proof}
\begin{proof}[Proof of \eqref{eq:weightedmain}] We first perform the proof in the case $p\geq 2$. Let us for a moment fix a $U\subseteq \Sigma$ and write $\mathbb{Z}^\Sigma = \mathbb{Z}^U \otimes \mathbb{Z}^{\Sigma\setminus U}$ so that given ${\cic{\ell}}=\{\ell_\sigma\}_{\sigma\in\Sigma}$ we decompose ${\cic{\ell}} =  \cic\tau \times \cic t$ with $\cic \tau=\{\tau_\sigma:\, \sigma\in U\}$. Replacing the supremum by an $\ell^p$ function gives
	\begin{equation}\label{eq:(6)}
	\Big\|\sup_{{\cic{\ell}}\in \mathbb{Z}^\Sigma}\M_{\Omega_{{\cic{\ell}}}} f_{\cic \tau}\Big\|_{L^p(w)}\leq  \sup_{\sigma\in\Sigma} \sup_{\ell \in\mathbb{Z}}\|\M_{\Omega_{\sigma,\ell}}\|_{L^p(w)\to L^p(w)} \Big\|\big(\sum_{\cic\tau\in\mathbb{Z}^U}|f_{\cic \tau}|^p\big)^\frac{1}{p}\Big\|_{L^2(w)}.
	\end{equation}
As $p\geq 2$ estimate~\eqref{eq:(6)} implies 
\[
\Big\|\sup_{\cic{ \ell}\in \mathbb{Z}^\Sigma}\M_{\Omega_{\cic \ell}} f_{\cic{ \tau}}\Big\|_{L^p(w)}\leq \sup_{\sigma\in\Sigma} \sup_{\ell \in\N}\|\M_{\Omega_{\sigma,\ell}}\|_{L^p(w)\to L^p(w)} \Big\|\big(\sum_{\cic \tau\in\mathbb{Z}^S}|f_{\cic \tau}|^2\big)^\frac{1}{2}\Big\|_{L^p(w)}.
\]
Now \eqref{eq:weightedmain}  follows by taking $f_{\cic \tau}\coloneqq   {K_{U,\cic\tau}} $ where $\cic \tau=\{\tau_\sigma:\, \sigma\in U\}$ and bounding the right hand side in the last display, from above, by Lemma~\ref{lem:PRlemmaw}, and the left hand side of the last display, from below, by Lemma~\ref{lem:splitw}.
For $1<p<2$ we note that, by monotone convergence, it suffices to show the estimate for every finite subset of $\Omega$, which we still call $\Omega$. Then $\|\M_\Omega\|_{L^p(w)\to L^p(w)}<\infty$, $w\in A_p ^\Omega$, so we can interpolate between the estimates
\[
\Big\|\sup_{\cic \ell\in {\mathbb Z}^\Sigma}\M_{\Omega_{\cic \ell}} f_{\cic \tau}\Big\|_{L^p(w)}\leq\|\M_{\Omega_{\cic \ell}} \|_{L^p(w)\to L^p(w)} \Big\|\sup_{\cic \ell\in {\mathbb Z}^\Sigma}|f_{\cic \tau}|\Big\|_{L^p(w)}
\]
and \eqref{eq:(6)} to conclude
\[\begin{split}&\quad 
\Big\|\sup_{\cic \ell\in {\mathbb Z}^\Sigma}\M_{\Omega_{\cic \ell}} f_{\cic \tau}\Big\|_{L^p(w)}\\ &\leq \|\M_\Omega\|_{L^p(w)\to L^p(w)}^{1-\frac{p}{2}}\Big(\sup_{\sigma\in\Sigma}\sup_{\ell\in{\mathbb Z}}\|\M_{\Omega_{\sigma,\ell}}\|_{L^p(w)\to L^p(w)}\Big)^\frac{p}{2}\Big\| \Big(\sum_{\cic\tau\in{\mathbb Z}^S}|f_{\cic\tau}|^2\Big)^\frac{1}{2}\Big\|_{L^p(w)}.
\end{split}
\]
Taking again $f_{\cic \tau}={K_{U,\cic\tau}} $  an application of Lemmata~\ref{lem:PRlemmaw} and \ref{lem:splitw} yields
\[
\|\M_\Omega\|_{L^p(w)\to L^p(w)}\lesssim [w]_{A_p ^*} ^\gamma \|\M_{\Omega}\|_{L^p(w)\to L^p(w)} ^{1-\frac{p}{2}}  \Big(\sup_{\sigma\in\Sigma}\sup_{\ell\in{\mathbb Z}}\|\M_{\Omega_{\sigma,\ell}}\|_{L^p(w)\to L^p(w)}\Big)^\frac{p}{2}
\]
with $\gamma=\beta+n/(p-1)$. As we have assumed that $\|\M_\Omega\|_{L^p(w)\to L^p(w)}<\infty$ we may rearrange and complete the proof of the theorem.
\end{proof}

%%%%%%%%%%%%%%%%%%%%%%%%%%%%%% PROOF PROOF PROOF

%%%%%%%%%%%%%%%%%%%%%%%%%%%%%% SECTION SECTION SECTION
\section{An almost orthogonality principle for the maximal Hilbert transform} \label{sec:aop} We now prove an almost orthogonality principle for the maximal Hilbert transform of a set $\Omega\subset S^{2}$. 
In the statements below it is convenient to  write for all nonnegative integers $N$, weights $w$ on $\R^{3}$, and $\Omega\subset S^{2}$
\[
\Theta_N(\Omega,w)\coloneqq\sup_{\substack{O \subset \Omega\\ \# O \leq N} }\left\| H_O\right\|_{L^2(\R^3;w)\to L^2(\R^3;w)}.
\]

%%%%%%%%%%%%%%%%%%%%%%%%%%%%%% THEOREM THEOREM THEOREM
\begin{theorem}\label{th:main} There exist $C,\gamma\geq 1$  such that the following holds. Let    $N$ be a positive integer,  $\mathcal B$ be a choice of ONB,  $\Omega\subset S^{2}$ a set of directions containing $\mathcal B$ and $w\in A_2^\Omega$. Then 
\[
\Theta_N(\Omega,w) \leq C  [w]_{A_2^\Omega}^\gamma \left[\sqrt{\log N}   + \sup_{\sigma \in \Sigma } \sup_{\ell \in \mathbb Z} \Theta_N(\Omega_{\sigma,\ell},w)\right]
\]
where the lacunary dissection is taken with respect to $\mathcal B$ as in \eqref{lacdiss}.
\end{theorem}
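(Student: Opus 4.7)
\emph{The plan is to} apply the pointwise decomposition of Lemma \ref{lemma:represent} at the outset and then reduce the $L^2(w)$ analysis to two separate estimates: one for the \emph{inner} maximal operator
\[
\mathcal I f \coloneqq \sup_{\omega \in O}\sup_{\varnothing\neq U\subseteq\Sigma} |H_\omega K_{U,\cic{\ell}(\omega)} f|
\]
and one for the \emph{outer} maximal operator
\[
\mathcal O f \coloneqq \sup_{\omega \in O}\sup_{\cic{\eps}\in\{+,-\}^\Sigma}\sup_{\varnothing\neq U\subseteq\Sigma}|K_{U,\cic{\ell}(\omega)}^{\cic{\eps}} f|,
\]
where $\cic{\ell}(\omega)\in \mathbb{Z}^\Sigma$ is the unique cell index with $\omega\in S_{\cic{\ell}(\omega)}$. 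The two target bounds will be $\|\mathcal I f\|_{L^2(w)}\lesssim [w]_{A_2^\Omega}^{\gamma_1} \sup_{\sigma,\ell}\Theta_N(\Omega_{\sigma,\ell},w) \|f\|_{L^2(w)}$ and $\|\mathcal O f\|_{L^2(w)}\lesssim [w]_{A_2^\Omega}^{\gamma_2} \sqrt{\log N}\, \|f\|_{L^2(w)}$, which then sum to the claim. Note that $\mathcal B\subset \Omega$ gives the inclusion $A_2^\Omega\subset A_2^*$ and a norm comparison $[w]_{A_2^*}\le [w]_{A_2^\Omega}$, which we will use freely.

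\emph{Inner piece.} Fix $\varnothing\neq U\subseteq \Sigma$. Since $K_{U,\cic{\ell}}$ depends only on $\cic{\tau}\coloneqq (\ell_\sigma)_{\sigma\in U}\in\mathbb Z^U$, I would partition $O$ into the subsets $O^U_{\cic{\tau}}\coloneqq\{\omega\in O : (\ell_\sigma(\omega))_{\sigma\in U}=\cic{\tau}\}$ and bound the pointwise sup by an $\ell^2_{\cic{\tau}}$-sum:
\[
\sup_{\omega\in O}|H_\omega K_{U,\cic{\ell}(\omega)} f|^2 \le \sum_{\cic{\tau}\in\mathbb Z^U} \big|H_{O^U_{\cic{\tau}}}(K_{U,\cic{\tau}} f)\big|^2.
\]
Each $O^U_{\cic{\tau}}$ has cardinality at most $N$ and sits inside $\Omega_{\sigma,\tau_\sigma}$ for any chosen $\sigma\in U$. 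Thus Fubini and the definition of $\Theta_N$ control the $L^2(w)$-norm squared of the right side by $\sup_{\sigma,\ell}\Theta_N(\Omega_{\sigma,\ell},w)^2\,\big\|(\sum_{\cic{\tau}}|K_{U,\cic{\tau}}f|^2)^{1/2}\big\|_{L^2(w)}^2$, and the weighted square function estimate of Lemma \ref{lem:PRlemmaw} at $p=2$ closes this branch with $[w]_{A_2^*}^{2\beta}\|f\|_{L^2(w)}^2$.

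\emph{Outer piece.} The sups over $\cic{\eps}$ and nonempty $U$ range over $O(1)$-many choices, so I would fix these parameters. Splitting $f=\sum_Q f_Q$ across the eight octants of $\R^3$ via Riesz-type Fourier projections (bounded on $L^2(w)$ since $w\in A_2^*$), Lemma \ref{lemma:annular} supplies, for each octant $Q$, a coordinate direction $\upsilon=\upsilon(U,\cic{\eps},Q)\in\{1,2,3\}$ such that
\[
\big|K_{U,\cic{\tau}}^{\cic{\eps}} P_t^\upsilon f_Q\big|\lesssim \mathrm M_{\mathsf{s}}^2 P_t^\upsilon f_Q
\]
pointwise, uniformly in $\cic{\tau}\in\mathbb Z^U$ and $t\in\mathbb Z$. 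Since the outer sup involves only $\cic{\tau}$ in the set $\mathcal T_O^U\coloneqq\{(\ell_\sigma(\omega))_{\sigma\in U}:\omega\in O\}$, of cardinality at most $N$, the weighted Chang--Wilson--Wolff decoupling of Proposition \ref{prop:weightedcww} applied in the $\upsilon$-direction, together with the above uniformity, gives
\[
\Big\|\sup_{\cic{\tau}\in\mathcal T_O^U}|K_{U,\cic{\tau}}^{\cic{\eps}} f_Q|\Big\|_{L^2(w)} \lesssim \sqrt{\log N}\,\Big\|\Big(\sum_{t\in\mathbb Z}|\mathrm M_{\mathsf{s}}^2 P_t^\upsilon f_Q|^2\Big)^{1/2}\Big\|_{L^2(w)}.
\]
Applying successively the weighted vector-valued Fefferman--Stein inequality for $\mathrm M_{\mathsf s}$ and the weighted one-dimensional Littlewood--Paley inequality in the $\upsilon$-coordinate reduces the right-hand side to a multiple of $\|f_Q\|_{L^2(w)}$.

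\emph{Expected obstacle.} The main technical care will be the polynomial tracking of $[w]_{A_2^\Omega}$-dependence through three weighted ingredients: the Chang--Wilson--Wolff decoupling, the vector-valued Fefferman--Stein bound for $\mathrm M_{\mathsf s}$, and the one-dimensional directional Littlewood--Paley inequality. All three ultimately reduce to quantitative $A_p^*$-weighted estimates for strong Muckenhoupt weights, which are classical; the extrapolation Lemma \ref{lemma:extr1} together with the weighted directional maximal inequality in Theorem \ref{thm:weightedM} will then be used to absorb every factor that appears into a single polynomial power of $[w]_{A_2^\Omega}$, completing the argument.
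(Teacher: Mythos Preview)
Your proposal is correct and follows essentially the same route as the paper: the pointwise decomposition of Lemma~\ref{lemma:represent}, then Lemma~\ref{lem:PRlemmaw} for the inner square function, and Proposition~\ref{prop:weightedcww} together with Lemma~\ref{lemma:annular} followed by weighted Fefferman--Stein and Littlewood--Paley for the outer piece. One small correction to your final paragraph: Lemma~\ref{lemma:extr1} and Theorem~\ref{thm:weightedM} are not needed for Theorem~\ref{th:main} itself---the polynomial control by $[w]_{A_2^\Omega}$ comes simply from $[w]_{A_2^*}\le [w]_{A_2^\Omega}$ and the quantitative $A_2^*$ bounds for each ingredient; those two results enter only later, in the derivation of Corollary~\ref{cor:main}.
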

%%%%%%%%%%%%%%%%%%%%%%%%%%%%%% THEOREM THEOREM THEOREM
By iterative application of the almost orthogonality principle, and extrapolation, we obtain the following corollary, of which Theorem \ref{th:mainintro} is the particular case $w=1$.

%%%%%%%%%%%%%%%%%%%%%%%%%%%%%% COROLLARY COROLLARY COROLLARY
\begin{corollary}\label{cor:main} Let $1<p<\infty$ and  $L\geq 0$. There exists constants $C=C_{p,L},\gamma=\gamma_{p,L}$ such that for any  $\Omega\subset S^{2}$    lacunary set of order $L$  and $w\in A_p^\Omega$   
\[
\sup_{\substack{O\subset \Omega \\ \# O\leq N}} \left\|H_{O}\right\|_{L^p(\R^3; w)\to L^p(\R^3; w)} \leq C [w]_{A_p^\Omega}^\gamma \sqrt{\log N}.
\]
\end{corollary}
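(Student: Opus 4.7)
The plan is to derive the corollary from Theorem~\ref{th:main} in two stages: first, prove the $p=2$ case by induction on the lacunary order $L$; second, lift to arbitrary $1<p<\infty$ via the extrapolation scheme of Lemma~\ref{lem:Apextrap}, using Theorem~\ref{thm:weightedM} to quantify the weighted bounds on $\M_\Omega$ appearing in the extrapolation constants.

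For the $p=2$ induction, the base case $L=0$ reduces to the one-directional weighted estimate $\|H_\omega f\|_{L^2(w)} \lesssim [w]_{A_2^{\{\omega\}}}^\gamma \|f\|_{L^2(w)}$, which follows from the classical one-dimensional weighted Hilbert transform bound applied slicewise (Fubini) in the direction $\omega$, since $w\in A_2^{\{\omega\}}$ means that the restrictions $v_{x,\omega}$ belong to $A_2(\R)$ uniformly in $x$. For the inductive step, assume the claim holds for lacunary sets of order at most $L-1$. Given a lacunary set $\Omega$ of order $L$, fix the ONB $\mathcal{B}$ and associated dissection witnessing the order-$L$ lacunarity, which we may include in $\Omega$ without loss (see Remark~\ref{rem:2-j}). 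By Definition~\ref{def:lacunary}, each sector $\Omega_{\sigma,\ell}$ is lacunary of order $L-1$, and $[w]_{A_2^{\Omega_{\sigma,\ell}}} \leq [w]_{A_2^\Omega}$. The inductive hypothesis yields
\[
\sup_{\sigma\in\Sigma}\sup_{\ell\in\mathbb Z}\Theta_N(\Omega_{\sigma,\ell},w) \leq C_{2,L-1}\,[w]_{A_2^\Omega}^{\gamma_{2,L-1}}\sqrt{\log N},
\]
and feeding this into Theorem~\ref{th:main} produces the desired bound
\[
\Theta_N(\Omega,w) \leq C_{2,L}\,[w]_{A_2^\Omega}^{\gamma_{2,L}}\sqrt{\log N}
\]
with $\gamma_{2,L} = \gamma + \gamma_{2,L-1}$ and suitable $C_{2,L}$.

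For the extrapolation step, fix $O\subset\Omega$ with $\#O\leq N$ and apply Lemma~\ref{lem:Apextrap} to the pair $(f, H_O f)$ with $p_0=2$; here the class of admissible pairs is parametrized by $O$, and the bound at $p=2$ is uniform in $O$ by the previous step with polynomial growth $\mathcal P(t) = Ct^{\gamma_{2,L}}\sqrt{\log N}$ in the $A_2^\Omega$ characteristic. The extrapolated constant $\mathcal K(w)$ involves $\|\M_\Omega\|_{L^p(w)}$ (or $\|\M_\Omega\|_{L^{p'}(\sigma)}$ in the case $p>2$), which Theorem~\ref{thm:weightedM} bounds by a power of $[w]_{A_p^\Omega}$ depending only on $p$, $n$, and $L$. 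Taking the supremum over $O\subset\Omega$ with $\#O\leq N$ on both sides—which is legitimate because the bound produced by extrapolation is uniform in the fixed $O$—yields the stated estimate with constants $C_{p,L}$, $\gamma_{p,L}$.

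The only subtle point is ensuring the qualitative finiteness hypotheses required to invoke extrapolation: we need $H_O$ to be a priori finite in the relevant norms. This is handled by the standard monotone convergence reduction indicated in Remark~\ref{rem:2-j} and in the proof of Theorem~\ref{thm:weightedM}—restrict to finite subsets $O$, for which all quantities are a priori finite, and take a supremum at the end. Apart from bookkeeping, the argument is purely a combination of the two ingredients (the almost orthogonality principle and the weighted extrapolation) already assembled in the paper.
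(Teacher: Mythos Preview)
Your proposal is correct and follows precisely the approach sketched in the paper: iterate Theorem~\ref{th:main} by induction on the lacunary order to obtain the $p=2$ weighted bound, then use the extrapolation Lemma~\ref{lem:Apextrap} together with the weighted maximal bound of Theorem~\ref{thm:weightedM} to reach general $1<p<\infty$. The details you supply (base case via one-dimensional weighted theory and Fubini, monotonicity $[w]_{A_2^{\Omega_{\sigma,\ell}}}\le [w]_{A_2^\Omega}$, and control of $\|\M_\Omega\|_{L^{p'}(\sigma)}$ via the duality $[\sigma]_{A_{p'}^\Omega}=[w]_{A_p^\Omega}^{1/(p-1)}$) are exactly what is needed to make the two-line remark preceding the corollary rigorous.
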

%%%%%%%%%%%%%%%%%%%%%%%%%%%%%% COROLLARY COROLLARY COROLLARY

The proof of Theorem \ref{th:main} rests upon the results of the previous sections, as well as on the proposition below,  a weighted version of the Chang-Wilson-Wolff principle, which we state and prove before the main argument. In the statement of the proposition below we remember that $A_p ^*=A_p ^{\Omega}$ with $\Omega$ being the canonical basis of $\R^n$, namely $\Omega=\{ {e_1},\ldots,  {e_n}\}$. We also use the standard notation $A_\infty ^*\coloneqq \cup_{p>1} A_p ^*$.

%%%%%%%%%%%%%%%%%%%%%%%%%%%%%% PROPOSITION PROPOSITION PROPOSITION
\begin{proposition}\label{prop:weightedcww}
 Let  $\{K_1,\ldots,K_N\}$ be Fourier multiplier operators on $\R^n$ with uniform bound \[
\sup_{1\leq j\leq N}\|K_j\|_{L^2(w)\to L^2(w)} \leq   [w]_{A_2^*}^\alpha
\]
for some $\alpha>0$.
Let $\{P_t ^\upsilon\}_{t\in\mathbb Z}$ be a smooth Littlewood-Paley decomposition acting on the $\upsilon$-th frequency variable, where $1\leq \upsilon\leq n$. For $w\in A_p ^*$ and $1<p<\infty$ we then have
	\[
	\Big\|\sup_{1\leq j\leq N} |K_j f|\Big\|_{L^p(w)}\lesssim  [w]_{A_p^\star}^\gamma  
	\bigg[ \|f\|_{L^p(w)} +   (\log (N+1))^\frac{1}{2} \Big\| \big(\sum_{t\in\mathbb Z}  \sup_{1\leq j\leq N} |K_j P_t^\upsilon f|^2 \big)^\frac{1}{2} \Big\|_{L^p(w)}\bigg]
	\]
for some exponent $\gamma=\gamma(\alpha,p,n)$ and implicit constant depending on   $\alpha,p,n$.
\end{proposition}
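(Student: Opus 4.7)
The proof rests on a weighted one-directional Chang--Wilson--Wolff (CWW) good-$\lambda$ inequality in the $\upsilon$-direction. The key structural observation is that each $K_j$ is a Fourier multiplier and therefore commutes with the one-directional Littlewood--Paley projection $P_t^\upsilon$. Consequently, setting $F_j := K_j f$, the one-directional square function in the variable $x_\upsilon$ satisfies the pointwise bound
\[
S_\upsilon F_j(x) \coloneqq \Big(\sum_{t\in\mathbb{Z}} |P_t^\upsilon F_j(x)|^2\Big)^{1/2} = \Big(\sum_{t\in\mathbb{Z}} |K_j P_t^\upsilon f(x)|^2\Big)^{1/2} \le G(x),
\]
where $G$ denotes the square function appearing on the right-hand side of the statement. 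In particular $\sup_j S_\upsilon F_j \le G$ pointwise, which is the mechanism that will transfer any control of $\sup_j|F_j|$ by $\sup_j S_\upsilon F_j$ into the desired bound in terms of $G$.

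The plan is then to invoke the weighted good-$\lambda$ inequality, valid for any $w\in A_p^*$,
\[
w\bigl\{|F|>2\lambda,\ S_\upsilon F\le \gamma\lambda\bigr\}\le C\, [w]_{A_p^*}^{\gamma_0}\, e^{-c/\gamma^2}\, w\bigl\{\M^\upsilon F>\lambda\bigr\},\qquad \lambda,\gamma>0,
\]
where $\M^\upsilon$ is the one-directional Hardy--Littlewood maximal operator along $e_\upsilon$. This is derived by applying the classical one-dimensional CWW estimate slice-by-slice to lines parallel to $e_\upsilon$; on each such slice the restriction of $w$ is a one-dimensional $A_p$-weight with constant uniformly controlled by $[w]_{A_p^*}$, as recalled in Section~\ref{sec:wnidmo}. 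Integrating in the transverse variables against $w$ recovers the stated inequality. Applying it to each $F_j$ and taking a union bound over $j=1,\dots,N$ yields
\[
w\bigl\{\sup_j|F_j|>2\lambda,\ \sup_j S_\upsilon F_j\le\gamma\lambda\bigr\}\le C N [w]_{A_p^*}^{\gamma_0}\, e^{-c/\gamma^2}\, w\bigl\{\sup_j\M^\upsilon F_j>\lambda\bigr\}.
\]

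The final step is to choose $\gamma=c_1/\sqrt{\log(N+1)}$ so that $N e^{-c/\gamma^2}$ is as small as needed, and to run the standard good-$\lambda$ absorption after multiplication by $p\lambda^{p-1}$ and integration. The term $\sup_j \M^\upsilon F_j$ on the right is controlled pointwise by $\M^\upsilon(\sup_j|F_j|)$, then by the $L^p(w)$-boundedness of $\M^\upsilon$ with polynomial dependence in $[w]_{A_p^*}$; self-improvement is legitimate since the a priori bound $\|\sup_j|F_j|\|_{L^p(w)}\le N[w]_{A_p^*}^\alpha\|f\|_{L^p(w)}$ is finite for fixed $N$. This yields
\[
\|\sup_j|F_j|\|_{L^p(w)}\lesssim [w]_{A_p^*}^{\gamma_1}\sqrt{\log(N+1)}\,\|G\|_{L^p(w)},
\]
in light of $\sup_j S_\upsilon F_j\le G$. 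The additive $\|f\|_{L^p(w)}$ term in the proposition then accommodates the degenerate small-$N$ regime (where the good-$\lambda$ constants are not yet useful), through the trivial bound $\|K_1 f\|_{L^p(w)}\lesssim [w]_{A_p^*}^\alpha\|f\|_{L^p(w)}$. The principal technical hurdle is securing the weighted one-directional CWW good-$\lambda$ with \emph{polynomial} $[w]_{A_p^*}$-dependence; once that ingredient is in place via the slice-wise reduction, the remainder of the argument is a routine good-$\lambda$ bookkeeping. Alternatively, an equally viable route is to establish the estimate at $p=2$ and then appeal to Lemma~\ref{lem:Apextrap} to pass to general $p\in(1,\infty)$.
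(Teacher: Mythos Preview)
Your strategy coincides with the paper's: apply the one-dimensional Chang--Wilson--Wolff good-$\lambda$ inequality fiber-by-fiber in the $e_\upsilon$ direction (using that the restriction of $w$ to each fiber is uniformly $A_\infty$), take a union bound over the $N$ multipliers, and choose $\gamma$ to absorb the factor $N$. Two points deserve attention, though, and the second is precisely where the paper spends its effort.

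First, a minor misstatement: the weighted CWW good-$\lambda$ has the weight constant in the \emph{exponent}, namely $\exp\bigl(-c/([w]_{A_\infty^{(\upsilon)}}\gamma^2)\bigr)$, not as a separate polynomial prefactor $[w]_{A_p^*}^{\gamma_0}$; this is the form recorded in \eqref{eq:CWW}. The repair is to take $\gamma\sim ([w]_{A_\infty^{(\upsilon)}}\log(N+1))^{-1/2}$, which yields an additional harmless power of $[w]_{A_p^*}$ in front of the square function term.

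Second, and this is the substantive gap: the CWW inequality \eqref{eq:CWW} is a statement about the dyadic \emph{martingale} square function $\Delta^1F$, not the smooth Littlewood--Paley square function $S_\upsilon F=(\sum_t|P_t^\upsilon F|^2)^{1/2}$ that you invoke. Your slice-wise reduction produces the former; passing from $\sup_j\Delta^1F_j$ to $\sup_j S_\upsilon F_j$ (and hence to $G$) requires a comparison which is not pointwise and typically brings in an $r>1$ maximal operator. This is exactly why the paper's intermediate estimate \eqref{eq:CWWpf} contains $\widetilde{\M}_{e_1}f=(\M_{e_1}|f|^r)^{1/r}$ and why the reverse H\"older (openness) property of $A_p^*$ is then needed to bound $\widetilde{\M}_{e_1}$ on $L^p(w)$. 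Your direct absorption of $\|\M^\upsilon(\sup_j|F_j|)\|_{L^p(w)}$ into the left-hand side is a legitimate way to handle the maximal function on the \emph{right} of the good-$\lambda$, but it does not address this martingale-to-smooth passage. A smaller omission in the same vein: CWW bounds $|F-\mathbb E_0^1 F|$, not $|F|$; the remainder is controlled by the individual estimates $\|\M_{e_1}K_jf\|_{L^p(w)}\lesssim\|f\|_{L^p(w)}$ (via the hypothesis on $K_j$ together with extrapolation), which is the true source of the additive $\|f\|_{L^p(w)}$ term in the conclusion.
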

%%%%%%%%%%%%%%%%%%%%%%%%%%%%%% PROPOSITION PROPOSITION PROPOSITION

%%%%%%%%%%%%%%%%%%%%%%%%%%%%%% PROOF PROOF PROOF
\begin{proof}  To simplify the notation we work with  $\upsilon=1$ and set
\[
K^\star f\coloneqq\sup_{1\leq j\leq N} |K_j f|, \qquad D K^\star\coloneqq \Big(\sum_{t\in\mathbb Z}  \sup_{1\leq j\leq N} |K_j P_t^1 f|^2 \Big)^\frac{1}{2}.
\]
Let $\{\mathcal D_j:\,j \in \mathbb Z\}$ be the standard dyadic filtration on $\R$, $\mathbb E_j$ be the associated sequence of conditional expectations, and $\Delta f$ denote the associated martingale square function. Let $\mathbb E_j^1$ be the sequence of conditional expectations on $L^1(\R^n)$ acting on tensor products $f(x)=g(x_1)\otimes h(x_2,\ldots,x_n)$ by  $\mathbb E_j^1 f \coloneqq \mathbb E_j g \otimes h$ and denote by $\Delta^1 f$ the associated martingale square functions.
The Chang-Wilson-Wolff inequality \cite{CWW} tells us that if $w$ is an $A_\infty ^*$-weight then
\begin{equation}
\label{eq:CWW}
\begin{split}
&\quad 
w\left(\left\{x\in \R^n:\, |g(x) - \mathbb E_0^1 g(x)|>2\lambda, |\Delta^1 f(x)|\leq \gamma \lambda \right\}\right)
\\ 
&\qquad  \leq A \exp\bigg( -\frac{b}{[w]_{A_\infty}^{(1)} \gamma^2}\bigg)w\left(\left\{x\in \R^n: \, | { \mathrm{M}_{e_1}} g(x)|> \lambda   \right\}\right),
\end{split}
\end{equation}
where $A,b$ are absolute positive constants  and 
\[
[w]_{A_\infty^{(1)}}\coloneqq \sup_{x\in \R^n} [w(x+\cdot e_1 )]_{A_\infty};
\]
here $[\cdot]_{A_\infty}$ denotes the Wilson $A_\infty$ constant of a weight on the real line, see \cite{Wilson}. The inequality \eqref{eq:CWW} for $n>1$ is in fact obtained  from the one dimensional version of \cite{CWW} and Fubini.  As $[w]_{A_\infty^{(1)}}\leq [w]_{A_p^*}$ and proceeding exactly as in 
  in the proof of  \cite[Corollary 1.14]{DPGTZK}  we can use the above inequality to reach 
\begin{equation}
\label{eq:CWWpf}
\begin{split} &\quad 
\|K^\star f\|_{L^{p}(w)}  
\\ 
&\lesssim \| \mathrm{M}_{e_1}f\|_{L^{p}(w)} +\sup_{1\leq j\leq N}\| \mathrm{M}_{e_1} K_{j} f\|_{L^{p}(w)}  + \sqrt{\log (N+1)}\| \widetilde{ \mathrm{M}}_{e_1}  \|_{L^{p}(w)}  \|  D K^\star f\|_{L^{p}(w)} 
\end{split}
\end{equation}
where $ \widetilde{ \mathrm{M}}_{e_1}f\coloneqq(\mathrm{M}_{e_1}|f|^r)^\frac{1}{r}$, and  $r>1$ can be chosen arbitrarily close to $1$; the implicit constant depends on $p,r$, and polynomially on  $[w]_{A_p^*}$. Since our weight $w\in A_p ^*$, ${ \mathrm{M}}_{e_1}$ is a bounded operator on $L^p(w)$. Furthermore, using the reverse H\"older property for $A_\infty^*$ weights, see e.g.\ \cite[Theorem 1.4]{HP}, we actually have the openness property
\[
[w]_{A_{\frac pr}^*} \leq 2 [w]_{A_{p}^*}, \qquad r \leq \frac{p}{p-c([w]_{A_\infty^*})^{-1}},
\]
where the positive constant $c=c(p,n)\leq 1$  can be explicitly computed. 
Therefore $\widetilde{ \mathrm{M}}_{e_1}$ is also a bounded operator on $L^p(w)$ provided $r$ is chosen small enough to comply with the restriction in the last display. Making use of these $L^p(w)$-bounds in \eqref{eq:CWWpf}  finally yields  the proposition.
\end{proof}
%%%%%%%%%%%%%%%%%%%%%%%%%%%%%% PROOF PROOF PROOF

%%%%%%%%%%%%%%%%%%%%%%%%%%%%%% PROOF PROOF PROOF
\subsection{Proof of Theorem \ref{th:main}}  In this proof the implicit constants occurring in the inequalities as well as the exponent $\gamma$ are meant to be absolute and are allowed to vary without explicit mention. Let $\Omega\subset S^2$ and an ONB $\mathcal B\subset \Omega$ be given. Fix a subset $O\subset \Omega$ with $\#O=N$. Of course the set of addresses of the cells whose intersection with $O$ is nonempty, in symbols
$
\mathbb L_O \coloneqq \{{\cic{\ell}} \in \mathbb Z^\Sigma:\,  O\cap S_{{\cic{\ell}}}\neq \emptyset\}
$,
has cardinality at most $N$.
 We use the pointwise estimate of   Lemma \ref{lemma:represent} for each ${\omega}\in O $ to obtain that 
\begin{equation}
\label{eq:pw1}
\begin{split}
 |H_{O} f|  &\lesssim |f| +  \sup_{\varnothing \subsetneq U\subseteq \Sigma}\sup_{\cic{\ell} \in {\mathbb L_O}  } \left|H_{O\cap S_{\cic{\ell}}}   K_{U,\cic{\ell}}f \right|   + \sup_{\varnothing \subsetneq U\subseteq \Sigma} \sup_{{\cic{\eps}} \in \{+,-\}^U}   \sup_{{\cic{\ell}} \in {\mathbb L_O}  } \big|   K_{U,{\cic{\ell}}}^{{\cic{\eps}} } f\big|.
\end{split}
\end{equation}
We may ignore the first summand on the right hand side. We bound the norm of the second summand on the right hand side by a constant multiple of 
\begin{equation}
\label{eq:pw2}
\begin{split}
&\quad \sup_{\varnothing \subsetneq U\subseteq \Sigma} \bigg\|\Big( \sum_{{\cic{\ell}} \in {\mathbb L_O}  } \big|H_{O\cap S_{{\cic{\ell}}}}   K_{U,{\cic{\ell}}}f \big|^2\Big)^{\frac12} \bigg\|_{L^2(w)} \leq B  \sup_{\varnothing \subsetneq U\subseteq \Sigma} \bigg\|\Big( \sum_{{\cic{\ell}} \in \mathbb Z^U } \big|   K_{U,{\cic{\ell}}}f \big|^2\Big)^{\frac12} \bigg\|_{L^2(w)} 
\\ 
& \qquad \lesssim [w]_{A_2^\Omega}^\gamma\Big( \sup_{\sigma \in \Sigma } \sup_{\ell \in \mathbb Z} \Theta_N(\Omega_{\sigma,\ell},w) \Big) \|f\|_{L^2(w)},
\end{split}
\end{equation}
where in the last step we have used the weighted estimate of Lemma \ref{lem:PRlemmaw}, and we have also used the easy estimate
\[
B\coloneqq \sup_{ \|g\|_{L^{2}(w;\ell^2)}=1}  \bigg\|\Big(\sum_{{\cic{\ell}} \in \mathbb Z^\Sigma}\big|H_{O\cap S_{{\cic{\ell}}}}  g_{{\cic{\ell}}}\big|^2\Big)^{\frac12} \bigg\|_{L^2(w)} \leq \sup_{\sigma \in \Sigma } \sup_{\ell \in \mathbb Z} \Theta_N(\Omega_{\sigma,\ell},w).
\]
The third summand in \eqref{eq:pw1} is treated in the next Proposition. In fact,  coupling the bounds \eqref{eq:pw2} above, and  \eqref{eq:pw3} below, with the pointwise estimate \eqref{eq:pw1}, and noticing that $[w]_{A_2^*}\leq[w]_{A_2^\Omega}$ since the coordinate basis vectors are contained in $\Omega$, completes the proof of Theorem \ref{th:main}.
%%%%%%%%%%%%%%%%%%%%%%%%%%%%%% PROOF PROOF PROOF

%%%%%%%%%%%%%%%%%%%%%%%%%%%%%% PROPOSITION PROPOSITION PROPOSITION
\begin{proposition}\label{prop:smoothm} Let $\mathbb L$ be a finite  subset of $\,\mathbb {Z}^3$. Then 
\begin{equation}\label{eq:pw3}
\sup_{\varnothing \subsetneq U\subseteq \Sigma} \sup_{{\cic{\eps}} \in \{+,-\}^U}  \Big\|  \sup_{{\cic{\ell}} \in {\mathbb L} }\big|   K_{U,{\cic{\ell}}}^{{\cic{\eps}} } f\big|\Big\|_{L^2(w)} \lesssim [w]_{A_2^*}^\gamma \sqrt{\log (\# {\mathbb L}+1)} \|f\|_{L^2(w)}.
\end{equation}
\end{proposition}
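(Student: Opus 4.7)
The plan is to combine the single-tree pointwise estimate of Lemma \ref{lemma:annular} with the weighted Chang--Wilson--Wolff decoupling of Proposition \ref{prop:weightedcww}, linked by an octant Fourier decomposition that makes both simultaneously applicable. As a first step, decompose $f=\sum_Q f_Q$ where the sum runs over the eight octants $Q$ of $\R^3$ and $\widehat{f_Q}=\widehat{f}\cic{1}_Q$. Each projection $f\mapsto f_Q$ is a tensor product of one-dimensional Riesz-type projections on half-lines, hence bounded on $L^2(w)$ with $\|f_Q\|_{L^2(w)}\lesssim [w]_{A_2^*}^\gamma\|f\|_{L^2(w)}$ whenever $w\in A_2^*$. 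Using
$\sup_{{\cic{\ell}}}|K_{U,{\cic{\ell}}}^{{\cic{\eps}}} f|\leq \sum_Q \sup_{{\cic{\ell}}}|K_{U,{\cic{\ell}}}^{{\cic{\eps}}} f_Q|$ and finiteness of the collection of octants, it suffices to prove \eqref{eq:pw3} under the assumption $\mathrm{supp}\widehat{f}\subset Q$ for a fixed octant. Fix then such $Q$ together with $U,{\cic{\eps}}$, and let $\upsilon=\upsilon(U,{\cic{\eps}},Q)\in\{1,2,3\}$ be the frequency variable supplied by Lemma \ref{lemma:annular}.

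Next, apply Proposition \ref{prop:weightedcww} with this $\upsilon$ to the operator family $\{K_{U,{\cic{\ell}}}^{{\cic{\eps}}}:{\cic{\ell}}\in\mathbb{L}\}$. The required uniform-in-${\cic{\ell}}$ bound $\|K_{U,{\cic{\ell}}}^{{\cic{\eps}}}\|_{L^2(w)\to L^2(w)}\lesssim [w]_{A_2^*}^\alpha$ holds because each factor $K_{\sigma,\ell_\sigma}^{\eps_\sigma}$ is an essentially two-dimensional Marcinkiewicz multiplier in $(\xi_{\sigma(1)},\xi_{\sigma(2)})$ by \eqref{eq:fmoest}, hence bounded on $L^2(w)$ with polynomial dependence on $[w]_{A_2^*}$ by the weighted Marcinkiewicz theorem of Kurtz already invoked in Lemma \ref{lem:PRlemmaw}; compositions of such operators yield only a bounded loss in $[w]_{A_2^*}$. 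Proposition \ref{prop:weightedcww} then produces
\[
\Big\|\sup_{{\cic{\ell}}\in\mathbb{L}}|K_{U,{\cic{\ell}}}^{{\cic{\eps}}} f|\Big\|_{L^2(w)}\lesssim [w]_{A_2^*}^\gamma\bigg[\|f\|_{L^2(w)}+\sqrt{\log(\#\mathbb{L}+1)}\,\Big\|\Big(\sum_{t\in\mathbb{Z}}\sup_{{\cic{\ell}}\in\mathbb{L}}|K_{U,{\cic{\ell}}}^{{\cic{\eps}}}P_t^\upsilon f|^2\Big)^{1/2}\Big\|_{L^2(w)}\bigg].
\]

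The concluding step exploits that $P_t^\upsilon f$ retains Fourier support in $Q$, so Lemma \ref{lemma:annular} furnishes the ${\cic{\ell}}$-independent pointwise bound $|K_{U,{\cic{\ell}}}^{{\cic{\eps}}} P_t^\upsilon f(x)|\lesssim \mathrm{M}_{\mathsf{s}}^2(P_t^\upsilon f)(x)$. The inner supremum over ${\cic{\ell}}$ therefore disappears, leaving $\|(\sum_t|\mathrm{M}_{\mathsf{s}}^2 P_t^\upsilon f|^2)^{1/2}\|_{L^2(w)}$. Two applications of the weighted Fefferman--Stein vector-valued inequality for the strong maximal operator (with polynomial dependence on $[w]_{A_2^*}$, since $w\in A_2^*$) control this by $\|(\sum_t|P_t^\upsilon f|^2)^{1/2}\|_{L^2(w)}$, which in turn is comparable to $\|f\|_{L^2(w)}$ up to a polynomial factor in $[w]_{A_2^*}$ via one-dimensional weighted Littlewood--Paley theory applied in the $\upsilon$-th variable together with Fubini in the remaining two.

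The principal conceptual hurdle is ensuring Lemma \ref{lemma:annular} is truly available inside the square function produced by Chang--Wilson--Wolff: this requires both restriction to a single octant (so that the direction $\upsilon$ is well defined and intrinsic to $Q$) and an estimate on $|K_{U,{\cic{\ell}}}^{{\cic{\eps}}} P_t^\upsilon f|$ that is independent of ${\cic{\ell}}$, so that $\sup_{{\cic{\ell}}}$ passes harmlessly through to $\mathrm{M}_{\mathsf{s}}^2 P_t^\upsilon f$. Both are delivered by the octant decomposition and the pointwise nature of Lemma \ref{lemma:annular}. All remaining ingredients are standard quantitative weighted estimates, whose polynomial dependence on $[w]_{A_2^*}$ is tracked to produce the stated exponent $\gamma$.
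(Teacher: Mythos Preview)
Your proposal is correct and follows essentially the same approach as the paper: an octant decomposition via coordinate Hilbert transforms, then Proposition~\ref{prop:weightedcww} with the direction $\upsilon$ dictated by Lemma~\ref{lemma:annular}, followed by the pointwise single-tree bound and weighted Fefferman--Stein plus Littlewood--Paley to close. The only cosmetic difference is that the paper attributes the uniform weighted $L^2$ bound on $K_{U,{\cic{\ell}}}^{{\cic{\eps}}}$ directly to the machinery behind Lemma~\ref{lem:PRlemmaw}, whereas you spell out the Marcinkiewicz/Kurtz argument for each factor; the content is the same.
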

%%%%%%%%%%%%%%%%%%%%%%%%%%%%%% PROPOSITION PROPOSITION PROPOSITION

%%%%%%%%%%%%%%%%%%%%%%%%%%%%%% PROOF PROOF PROOF
\begin{proof} Fix $U\subseteq \Sigma, {\cic{\eps}} \in \{+,-\}^U $ throughout the proof.  By means of compositions of Hilbert transforms along the coordinate directions we may decompose
\[
f= \sum_{Q} f_Q, \qquad \|f_Q\|_{L^2(w)} \lesssim   [w]_{A_2 ^*} ^ 3 \|f\|_{L^2(w)},
\]
where each $f_Q$ has frequency support in one of the octants $Q$ of $\R^3$.  By virtue of the norm estimate of the above display, we may fix one of these octants $Q$ and prove \eqref{eq:pw3} for functions $f$ whose frequency support is contained in $Q$, which we do here onwards. Now we remember that by Lemma~\ref{lem:PRlemmaw} the multiplier operators $\{K_{U,{\cic{\ell}}}^{{\cic{\eps}} }:\, {\cic{\ell}} \in   {\mathbb{L}}\}$ satisfy weighted $L^2$ bounds with weighted operator norms bounded polynomially in $[w]_{A_2 ^*}$, uniformly in $U$ and $\cic{\ell}$.
This allows us to use Proposition \ref{prop:weightedcww} on the $  N= \# \mathbb{L}\,$  Fourier multiplier operators $\{K_{U,{\cic{\ell}}}^{{\cic{\eps}} }:\, {\cic{\ell}} \in   {\mathbb{L}}\}$, to get that
\begin{equation}
\label{eq:pw4}
\begin{split}
&\quad  \Big\|  \sup_{{\cic{\ell}} \in {\mathbb L }  }\big|   K_{U,{\cic{\ell}}}^{{\cic{\eps}} } f\big|\Big\|_{L^2(w)} \\ &\lesssim [w]_{A_2^*} ^\gamma\bigg[\|f\|_{L^2(w)}+ \sqrt{\log (N+1)}  \Big\| \Big(\sum_{t\in\mathbb Z}  \sup_{{\cic{\ell}} \in {\mathbb L }  } |K_{U,{\cic{\ell}}}^{{\cic{\eps}} } P_t^\upsilon f|^2 \Big)^\frac{1}{2} \Big\|_{L^2(w)}\bigg]
 \end{split}
\end{equation}
 for any $\upsilon\in \{1,2,3\}$. We  make the choice $\upsilon=\upsilon(U,{\cic{\eps}},Q)\in \{1,2,3\}$ according to Lemma \ref{lemma:annular}, so that based on $\mathrm{supp}\, \widehat f\subset Q$
\[
 \big|K_{U,{\cic{\ell}}}^{{\cic{\eps}} }(P_{t}^{\upsilon} f)(x)\big| \lesssim \mathrm M_{\mathsf{s}}^2 (P_{t}^{\upsilon} f) (x).
\]
Combining the last two inequalities followed by weighted Fefferman-Stein and Littlewood-Paley estimates
\[
\begin{split}
\Big\|  \sup_{{\cic{\ell}} \in {\mathbb L }  }\big|   K_{U,{\cic{\ell}}}^{{\cic{\eps}} } f\big|\Big\|_{L^2(w)} &\lesssim[w]_{A_2^*} ^\gamma\bigg[\|f\|_{L^2(w)}+\sqrt{\log (N+1)}   \Big\| \Big(\sum_{t\in\mathbb Z}  \mathrm M_{\mathsf{s}}^2 (P_{t}^{\upsilon} f)^2 \Big)^\frac{1}{2} \Big\|_{L^2(w)} \bigg]
 \\ 
 & \lesssim  [w]_{A_2 ^*}^\gamma \sqrt{\log (N+1)} \|f\|_{L^2(w)}
\end{split}
\]
which is the claimed \eqref{eq:pw3}. 
\end{proof}
%%%%%%%%%%%%%%%%%%%%%%%%%%%%%% PROOF PROOF PROOF

%%%%%%%%%%%%%%%%%%%%%%%%%%%%%% SECTION SECTION SECTION
\section{Quantitative counterexamples for the model operator} \label{sec:cex}
In this section, we show that sharp higher dimensional $(n\geq 4)$ analogues of  Theorem \ref{th:mainintro} cannot be attacked by means of the   model operators of Section \ref{sec:model}, which are essentially compositions of smooth two-dimensional lacunary cutoffs. To wit, we show that the maximal operators 
\[
\sup_{\cic{\ell} \in \mathbb L }  \Big|\Big[ \prod_{\sigma \in \Sigma}  \big(\mathrm{Id} - K_{{\sigma},  \ell _\sigma}^{\eps_\sigma}\big)\Big] f\Big|, \qquad \sup_{\cic{\ell} \in \mathbb L    }\big|   K_{U,{\cic{\ell}}}^{{\cic{\eps}} } f\big|,
\]
intervening in the decomposition of the maximal Hilbert transform induced by Lemma \ref{lemma:represent}, have operator norms which grow at order $( \log \#\mathbb  L  )^{\frac12\lfloor\frac n2 \rfloor}$. For $n\geq 4$, this is   unfavorable  compared to the maximal Hilbert transform over finite subsets $O$ of a (finite order) lacunary set $\Omega$, whose operator norm is of order at most $\log(\#O)$; see \cite[Corollary 4.1]{PR2}. Our counterexamples are obtained by careful  tensoring of the lower bound for the two-dimensional case $\Sigma=\{(1,2)\}$
which in turn descends from the main theorem of \cite{Karag}.

We use the notation of Section \ref{sec:model} and in particular of \eqref{eq:fmo}. However in this section it will be more convenient to use the equivalent (up to identity) definition
\[
H_{{\omega}} f(x) \coloneqq \int_{\R^n} \widehat{f}(\xi) \cic{1}_{(0,\infty)}  (\xi \cdot {\omega}) \e^{ix\cdot \xi }\, \d \xi .
\] 
%%%%%%%%%%%%%%%%%%%%%%%%%%%%%% SECTION SECTION SECTION
\subsection{A lower bound in $n=2$} 
 The lower bound for $p=2$ of Karagulyan \cite{Karag} combined with the upper bound for all $1<p<\infty $ of \cite{DDP,DPP}
tells us that for all $L\geq 0$ and $1<p<\infty$ there exists $c_{p,L}>0$ such that the following holds: Whenever $\Omega\subset S^1$ is a lacunary set of order $L$ and $O\subset \Omega$ is finite there exists a Schwartz function $f_O$ with
\begin{equation}
\label{eq:lbH}
\|f_O\|_{L^p(\R^2)}=1, \qquad 
\|H_O f_O\|_{L^p(\R^2)}\geq c_{p, L} \sqrt{\log \#O}.
\end{equation}
Let now $\Omega$ be a lacunary set of order 1 with   $\Omega\subset \{{\omega}\in S^1:\,\omega_1,  \omega_2>0\}$. We can take $f_O$ to be frequency supported in the quadrants $\{\xi \in \R^2:\,\xi_1\xi_2<0\}$ as $H_O$ acts trivially on the remaining frequency plane. By a symmetry argument we can actually take 
\begin{equation}\label{eq:suppfO}
	 \mathrm{supp}\,\widehat{f_O}\subset Q_{(1,2)}\coloneqq \big\{\xi \in \R^2:\, \xi_1>0, \, \xi_2<0\big\}.
\end{equation}
Rewriting \eqref{eq:split1} in this particular case we see that if ${\omega} \in \Omega \cap S_{(1,2),\ell(\omega)}$ and $\mathrm{supp}\,\widehat{f }\subset Q_{(1,2)}$ then
\[
\begin{split}
\widehat{H_{{\omega}} f}(\xi) & = \mathcal{F}\big(H_{{\omega}}K_{(1,2),\ell(\omega)}^\circ f\big)(\xi) + \cic{1}_{(0,\infty)}  (\xi \cdot {\omega}) \big(1- \kappa_{(1,2),\ell(\omega)}^\circ (\xi)\big) \widehat{f}(\xi) 
\\ 
& = \mathcal{F}\big(H_{{\omega}}K_{(1,2),\ell(\omega)}^\circ f\big)(\xi) +  \big(1- \kappa_{(1,2),\ell(\omega)}^+ (\xi)\big) \widehat{f}(\xi).
\end{split}
\]
We notice that, for some absolute constant $C_p$ 
\[
\Big\|\Big( \sum_{\omega \in \Omega }|H_{{\omega}}K_{(1,2),\ell(\omega)}^\circ f_O|^2\Big)^{\frac12}\Big\|_{L^p(\R^2)}\leq C_p \|f_O\|_{L^p(\R^2)} = C_p;
\]
this $L^p$-boundedness is most easily seen by proving the weighted $L^2$-bound as in Section 5 first. Comparing this last display with \eqref{eq:lbH} we obtain that
\[
\Big\|\sup_{\omega\in O}\big| K_{(1,2),\ell(\omega)}^{+} f_O\big|\Big\|_{L^p(\R^2)} \geq c_p \sqrt{\log \#O}
\]
provided $\#O$ is large enough, with  $c_p=c_{p,1}/2$. The arguments of Section~\ref{sec:model} and symmetry considerations finally show that there exist positive absolute constants $c_p,C_p$ such that for $\eps\in\{+,\circ,-\} $ and all finite index sets  $\mathbb L\subset \mathbb Z$ we have
\begin{equation}
\label{eq:65}
c_p \leq \frac{1}{\sqrt{\log \#\mathbb L}} \Big\| f \mapsto \sup_{\ell \in \mathbb L}\big| K_{(1,2),\ell }^{\eps} f \big|\Big\|_{L^p(\R^2)} \leq C_p.
\end{equation}
\begin{remark}\label{rmk:cex} Just like the maximal Hilbert transform, the maximal operators defined in \eqref{eq:65} are invariant under dilation and reflection through the frequency origin,  and act trivially on functions supported outside $\pm Q_{(1,2)}$. For any fixed $\mathbb L\subset \mathbb Z$ with $\#{\mathbb L}=N$, using the  lower bound in \eqref{eq:65}, the reflection symmetry and an approximation argument we may find $M >0$ and a Schwartz function $f_{\mathbb L}$ with
\begin{equation}
\label{eq:lbweuse}
\mathrm{supp}\,
\widehat{f_{\mathbb L} }\subset Q_{(1,2)}\cap A^{(1,2)}\big(2^{-\frac M 2},2^{\frac M2}\big), \qquad
\big\|  \sup_{\ell \in {\mathbb L}} | K_{(1,2),\ell }^{\eps}f_{{\mathbb L}} |\big\|_{L^p(\R^2)} \geq  C_p\sqrt{\log N}\|f_{{\mathbb L}}\|_{L^p(\R^2)},
\end{equation}
where 
\[
 A^{(1,2)}{(a,b)}\coloneqq \Big\{(\xi_1,\xi_2)\in \R^2: \, a< \sqrt{\xi_1^2+ \xi_2^2} < b\Big\}.
\]
Given any $s\in \R$, the dilation invariance can then be used to find $f_{s,{\mathbb L}}$ with the same properties as $f_{{\mathbb L}}$ in \eqref{eq:lbweuse} but $ \mathrm{supp}\,
\widehat{f_{s,{\mathbb L}} }\subset Q_{(1,2)}\cap A^{(1,2)}(2^{s},2^{s+M})$ .
\end{remark}
The next result is the anticipated counterexample to estimate \eqref{eq:pw3} in dimensions $4$ and higher.
%%%%%%%%%%%%%%%%%%%%%%%%%%%%%% THEOREM THEOREM THEOREM
\begin{theorem} \label{thm:cex}Let $n\geq 2$ be the dimension of the ambient space. Then
 \begin{equation}
\label{eq:pw3_below}
\inf_{{\cic{\eps}} \in \{+,-\}^\Sigma} \sup_{\substack{\, {{\mathbb L}}\subset \mathbb Z^{\Sigma}\\ \# {{\mathbb L}} = N} }\, \sup_{\varnothing \subsetneq U\subseteq \Sigma}  
 \Big\| f\mapsto  \sup_{{\cic{\ell}} \in {\mathbb L} }\big|   K_{U,{\cic{\ell}}}^{{\cic{\eps}} } f\big|\Big\|_{L^p(\R^n)\to L^p(\R^n)} \geq c_p \Big(\sqrt{\log N}\Big)^{ \left\lfloor \frac n 2\right\rfloor}  .
\end{equation}  
\end{theorem}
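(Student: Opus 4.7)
The strategy is to tensor the two-dimensional lower bound of Remark \ref{rmk:cex} across $m := \lfloor n/2 \rfloor$ pairwise disjoint coordinate pairs. Given $\cic\eps \in \{+,-\}^\Sigma$, fix $U := \{\sigma_i : i=1,\ldots,m\} \subseteq \Sigma$ with $\sigma_i := (2i-1,2i)$; as subsets of $\{1,\ldots,n\}$ these pairs are pairwise disjoint, and none contains the index $n$ when $n$ is odd. For $N$ large enough, set $N_0 := \lfloor N^{1/m}\rfloor$, and for each $i$ apply Remark \ref{rmk:cex} with the coordinate pair $\sigma_i$ in place of $(1,2)$ and the sign $\eps_{\sigma_i} \in \{+,-\}$, to obtain a set $\mathbb L^{(i)}\subset \mathbb Z$ of cardinality $N_0$ and a Schwartz function $f_i$ on $\R^2$, frequency-supported in the appropriate analogue of $Q_{(1,2)}$, such that
\[
\Big\|\sup_{\ell \in \mathbb L^{(i)}} \big|K_{\sigma_i,\ell}^{\eps_{\sigma_i}} f_i\big|\Big\|_{L^p(\R^2)} \geq C_p \sqrt{\log N_0}\, \|f_i\|_{L^p(\R^2)}.
\]

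Define $f \in \mathcal S(\R^n)$ by $f(x) := \prod_{i=1}^m f_i(x_{2i-1},x_{2i})$ when $n$ is even, multiplied by an arbitrary nonzero Schwartz function of $x_n$ if $n$ is odd (which is suppressed below since the multipliers do not act on it). Because $\kappa^{\eps_{\sigma_i}}_{\sigma_i,\ell}$ depends only on $(\xi_{2i-1},\xi_{2i})$ and the $\sigma_i$ are pairwise disjoint, the single-pair multipliers act on independent blocks of frequency variables, yielding the factorization
\[
K_{U,\cic{\ell}}^{\cic{\eps}} f(x) = \prod_{i=1}^m (K_{\sigma_i,\ell_{\sigma_i}}^{\eps_{\sigma_i}} f_i)(x_{2i-1},x_{2i}), \qquad \cic{\ell} \in \mathbb Z^U.
\]
Let $\mathbb L_U := \prod_{i=1}^m \mathbb L^{(i)}\subset \mathbb Z^U$, which has cardinality $N_0^m \leq N$, and let $\mathbb L \subset \mathbb Z^\Sigma$ be any extension with $\#\mathbb L = N$ obtained by embedding $\mathbb L_U$ (via zero-padding on $\Sigma\setminus U$) and adding $N - N_0^m$ arbitrary elements.

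Since $\mathbb L_U$ is a product set and the factors are nonnegative after taking absolute values, the supremum decouples and is monotone in $\mathbb L$:
\[
\sup_{\cic{\ell} \in \mathbb L} \big|K_{U,\cic{\ell}}^{\cic{\eps}} f(x)\big| \geq \sup_{\cic{\ell} \in \mathbb L_U} \big|K_{U,\cic{\ell}}^{\cic{\eps}} f(x)\big| = \prod_{i=1}^m \sup_{\ell \in \mathbb L^{(i)}} \big|K_{\sigma_i,\ell}^{\eps_{\sigma_i}} f_i(x_{2i-1},x_{2i})\big|.
\]
Taking $L^p(\R^n)$-norms, using the tensor structure to factor the norm as a product of $L^p(\R^2)$-norms, and multiplying the $m$ two-dimensional lower bounds together yields
\[
\Big\|\sup_{\cic{\ell} \in \mathbb L}|K_{U,\cic{\ell}}^{\cic{\eps}} f|\Big\|_{L^p(\R^n)} \geq C_p^m \big(\sqrt{\log N_0}\big)^m \|f\|_{L^p(\R^n)} \geq c_p \big(\sqrt{\log N}\big)^m \|f\|_{L^p(\R^n)},
\]
which is precisely \eqref{eq:pw3_below}. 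No substantial analytic obstacle arises: the whole argument reduces to the combinatorial observation that $\Sigma$ admits a system of $\lfloor n/2\rfloor$ pairwise disjoint coordinate pairs, and to the tensor-compatibility of the two-dimensional multipliers $K_{\sigma,\ell}^\eps$ on disjoint blocks of frequency variables.
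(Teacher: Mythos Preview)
Your proof is correct and takes a genuinely more direct route than the paper's. You select the ``disjoint pairs'' set $U=\{(1,2),(3,4),\ldots\}$ from the outset, tensor the two-dimensional extremizers, and observe that on such a $U$ the composition $K_{U,\cic{\ell}}^{\cic{\eps}}$ factors exactly over independent coordinate blocks; the supremum over the product set $\mathbb L_U$ then decouples pointwise. The paper instead proves the lower bound for the full composition $\prod_{\sigma\in\Sigma}(\mathrm{Id}-K_{\sigma,\ell_\sigma}^{+})$ and only afterwards infers, via the inclusion--exclusion identity $1-\prod(1-\kappa^+)=\sum_U(-1)^{\#U+1}\prod_U\kappa^+$ and pigeonholing, that \emph{some} $U$ achieves the bound. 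To make the cross-pair factors $(\mathrm{Id}-K_{\sigma,\ell_\sigma}^{+})$ with $\sigma\neq(2k-1,2k)$ act as the identity on the test function, the paper must place the frequency supports of the tensor factors in carefully separated annuli $A^{(2k-1,2k)}(2^{-3kM},2^{-(3k-1)M})$; your argument bypasses this engineering entirely. Conversely, the paper's set $\mathbb L$ arises as the cell indices of a genuine lacunary set $\Omega\subset S^{n-1}$, so its counterexample plugs directly into the decomposition of $H_\Omega$ from Lemma~\ref{lemma:represent}; your $\mathbb L$ is obtained by zero-padding and arbitrary completion, which suffices for the theorem as stated but is one step removed from the motivating operator.
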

%%%%%%%%%%%%%%%%%%%%%%%%%%%%%% THEOREM THEOREM THEOREM

%%%%%%%%%%%%%%%%%%%%%%%%%%%%%% PROOF PROOF PROOF
\begin{proof} It suffices to prove the statement for even $n=2d$ and for $N>10 d$, say. By symmetry considerations we may argue in the case where ${\cic{\eps}}=(+,\ldots,+)$.  Let ${\mathbb L}$ be the set of $N^{d}$ indices such that $S_{{\cic{\ell}}} \cap \Omega\neq \emptyset$, where $\Omega$ is the set of vectors on $S^{2d-1}$ obtained by normalizing the vectors $(x_1,\ldots, x_n)$ with components
\[
x_{2k-1}= 2^{-2k N},\quad x_{2k}= 2^{-2k N -m_k},\qquad m_k\in \{1,\ldots,N\},\quad k=1,\ldots,d.
\]
In practice ${\cic{\ell}} ={\cic{\ell}} (m_1,\ldots, m_d)\in {\mathbb L}$ is completely determined by the $2d-1$ conditions
\[
\ell_{(2k-1,2k)}=m_k, \quad k=1,\ldots,d; \qquad \ell_{(2k-1,2k+1)}= 2N, \quad k=1,\ldots,d-1.
\]
As
\[
 1-\prod_{\sigma \in \Sigma}\left(1-\kappa_{\sigma,\ell _\sigma}^{+}\right) = \sum_{\varnothing\subsetneq U \subseteq \Sigma} (-1)^{\# U+1} \prod_{\sigma\in U} \kappa_{\sigma,\ell _\sigma}^{+}
 \]
 estimate \eqref{eq:pw3_below} will follow if we prove that
\begin{equation}
\label{eq:intermediate}
\bigg\| f\mapsto  \sup_{{\cic{\ell}} \in {\mathbb L} }\Big|\Big[ \prod_{\sigma \in \Sigma}  \big(\mathrm{Id} - K_{{\sigma}, \ell _\sigma}^{+}\big)\Big] f\Big|\bigg\|_{L^p(\R^n)\to L^p(\R^n)} \geq c_p \left(\sqrt{\log N}\right)^d,
\end{equation} 
where product denotes composition. Now for each $k=1,\ldots,d$ define the function of two variables  $f_{k}=f_k(x_{2k-1},x_{2k})$ given by $f_{s,{\mathbb L}}$ in Remark \ref{rmk:cex} with the pair $(2k-1,2k) $ in the place of $(1,2)$, with ${\mathbb L}=\{1,\ldots,N\}$, and with $s$ chosen so that 
\[
\mathrm{supp}\, \widehat{f_k} \subset Q_{(2k-1,2k)}\cap A^{(2k-1,2k)}(2^{-3k M},2^{-(3k-1)M}) .
\]
Here 
\[
Q_{(2k-1,2k)}\coloneqq \big\{\xi \in \R^2: \, \xi_{2k-1}>0, \, \xi_{2k}<0\big\}.
\]
We now define
\[
f(x)\coloneqq  \prod_{k=1}^d f_k(x_{2k-1},x_{2k}).
\]
The point of this choice is that if $\sigma=(\sigma(1),\sigma(2))$ is such that $\sigma(1),\sigma(2)$ have the same parity then $\xi_{\sigma (1)},\xi_{\sigma (2)}$ have the same sign on the frequency support of $f$, so that $\big(\mathrm{Id} - K_{{\sigma},  \ell_\sigma}^{+}\big) f=f$. Also, unless  $\sigma=(2k-1,2k)$ for some $k=1,\ldots,d$,  there holds
\[
\begin{split}
&\mathrm{supp}\,\widehat{f} \subset\left\{\xi \in \R^n: \, \frac{|\xi_{\sigma(1)}|}{|\xi_{\sigma(2)}|} \geq  2^{3M}\right\},
\\
&\mathrm{Id} - K_{{\sigma},  \ell _\sigma}^{+} = \mathrm{Id} \quad   \textrm{on the cone} \quad |\xi_{\sigma(1)}|> (2d+1) 2^{- \ell_\sigma}|\xi_{\sigma(2)}|,
\end{split}
\]
which is a larger cone than the one where $\widehat{f}$ is supported, as $ \ell  _\sigma\geq N$ in this case. Summarizing we may delete from the composition in \eqref{eq:intermediate} all the $\sigma$ which are not of the form $\sigma=(2k-1,2k)$, and we have for all $m=1,\ldots,N$ that
\[
   \prod_{\sigma \in \Sigma}  \left(\mathrm{Id} - K_{{\sigma},({\cic{\ell}}(m_1,\ldots,m_d))_\sigma}^{+}\right)  f= \prod_{k=1}^d \left(\mathrm{Id} - K_{{(2k-1,2k)},m_k}^{+}\right)f_k,
\]
with the caveat that the product sign on the left hand side denotes composition while the product sign on the right hand side denotes pointwise product. Therefore using \eqref{eq:lbweuse} for the lower bound in the third line
\[
\begin{split}
&\quad 
\bigg\|  \sup_{{\cic{\ell}} \in \vec{\cic{ \ell}} }\Big|\Big[ \prod_{\sigma \in \Sigma}  \big(\mathrm{Id} - K_{{\sigma}, \ell _\sigma}^{+}\big)\Big] f\Big|\bigg\|_{L^p(\R^n)} \\ & =\bigg\|  \sup_{(m_1,\ldots,m_d)\in \{1,\ldots,N\}^d} \Big|\prod_{k=1}^d \big(\mathrm{Id} - K_{{(2k-1,2k)},m_k}^{+}\big)f_k\Big|\bigg\|_{L^p(\R^n)} 
\\
& =
\bigg\| \prod_{k=1}^d  \sup_{m_k \in  \{1,\ldots,N\}} \left|\left(\mathrm{Id} - K_{{(2k-1,2k)},m_k}^{+}\right)f_k\right|\bigg\|_{L^p(\R^n)} 
\\ 
&   = \prod_{k=1}^d \bigg\| \sup_{m_k \in  \{1,\ldots,N\}} \left|\left(\mathrm{Id} - K_{{(2k-1,2k)},m_k}^{+}\right)f_k\right|\bigg\|_{L^p(x_{2k-1},x_{2k})}
\\
& \geq c_p^d  (\log N)^{\frac d2} \prod_{k=1}^d \|f_k\|_{L^p(x_{2k-1},x_{2k})}
  = c_p^d  (\log N)^{\frac d2}  \|f \|_{L^p(\R^n)}.
\end{split}
\] 
This proves \eqref{eq:intermediate} and thus completes the proof of the theorem. 
\end{proof}
%%%%%%%%%%%%%%%%%%%%%%%%%%%%%% PROOF PROOF PROOF

%%%%%%%%%%%%%%%%%%%%%%%%%%%%%% REMARK REMARK REMARK
\begin{remark} \label{rem:cex2} This remark shows that the counterexample of Theorem \ref{thm:cex} is   sharp. We say that $U\subset \Sigma(n)$ has no odd cycles if it does not contain  tuples of pairs which are images under permutation of $\{1,\ldots, n\}$ of the tuple of pairs \[\{(1,2),(2,3),\ldots,( k-1,k), (1,k )\}\] with $k$ odd. In  the case that $U$ has odd cycles, in each given quadrant of $\R^n$ at least one of the multipliers $K_{\sigma,\ell}^{\eps}$ is trivial for both $\eps=\pm$; we can thus reduce to the case that $U$ has no odd cycles. This case is treated below.
	
Suppose that $\{\upsilon_1,\ldots,\upsilon_s\}\subset \{1,\ldots,n\}$ are such that for all $\sigma \in U$ there exists  $j$ such that $\upsilon_j\in \sigma$: in this case $\{\upsilon_1,\ldots,\upsilon_s\}$ is called \emph{spanning set} of $U$. Notice that for every $U\subset \Sigma$ we may find a spanning set with  $s\leq \lfloor n/2\rfloor$. Arguing in similar fashion as in the proof of Lemma \ref{lemma:annular} we may obtain    the pointwise estimate
\begin{equation}
\label{eq:pw56}
 \big|K_{U,{\cic{\ell}} \,}^{{\cic{\eps}} }(P_{t_1}^{\upsilon_1}\circ \cdots \circ P_{t_s}^{\upsilon_s} f)(x)\big| \lesssim \mathrm M^n _{\mathsf{s}} (P_{t_1}^{\upsilon_1}\circ \cdots \circ P_{t_s}^{\upsilon_s} f) (x),\quad x\in\R^n,
\end{equation}
uniformly over all $t_1,\ldots,t_s \in \R$. Now,  we may use an $s$-parametric version of the Chang-Wilson-Wolff inequality
to reduce estimates for the maximal operator associated to the multipliers $K_{U,\cic{\ell} }^{\cic{\eps}}$ over $\cic\ell \in \mathbb L$ to an $s$-fold Littlewood-Paley square function estimate involving the left hand side of \eqref{eq:pw56} with a loss of $(\log \#\mathbb L)^{\frac s2}$. An application of the  bound \eqref{eq:pw56} as in Proposition \ref{prop:smoothm} 
will thus lead to the estimate
\[ 
\sup_{{\cic{\eps}} \in \{+,-\}^U}  \big\|  \sup_{{\cic{\ell}} \in {\mathbb L} }|   K_{U,{\cic{\ell}}}^{{\cic{\eps}} } f|\big\|_{L^p(\R^n)} \lesssim  (\log \#\mathbb L)^{\frac s2} \|f\|_{L^p(\R^n)},
\]
which, together with the previously made observation that $s$ may be taken  $\leq \lfloor n/2\rfloor$ shows the sharpness of Theorem \ref{thm:cex}; in general the worst case is $U=\{(1,2),(3,4),...,(2\lfloor n/2\rfloor-1,2\lfloor n/2\rfloor)\}$. We leave the details to the interested reader.
\end{remark}
%%%%%%%%%%%%%%%%%%%%%%%%%%%%%% REMARK REMARK REMARK

%%%%%%%%%%%%%%%%%%%%%%%%%%%%%% SECTION SECTION SECTION
% \bib, bibdiv, biblist are defined by the amsrefs package.
\begin{bibdiv}
\begin{biblist}

\bib{ASV}{article}{
      author={Alfonseca, Angeles},
      author={Soria, Fernando},
      author={Vargas, Ana},
       title={A remark on maximal operators along directions in {${\Bbb
  R}^2$}},
        date={2003},
        ISSN={1073-2780},
     journal={Math. Res. Lett.},
      volume={10},
      number={1},
       pages={41\ndash 49},
         url={https://doi.org/10.4310/MRL.2003.v10.n1.a5},
      review={\MR{1960122}},
}

\bib{BAT}{article}{
      author={Bateman, Michael},
       title={Kakeya sets and directional maximal operators in the plane},
        date={2009},
        ISSN={0012-7094},
     journal={Duke Math. J.},
      volume={147},
      number={1},
       pages={55\ndash 77},
         url={http://dx.doi.org/10.1215/00127094-2009-006},
      review={\MR{2494456}},
}

\bib{BatThiele}{article}{
      author={Bateman, Michael},
      author={Thiele, Christoph},
       title={{$L^p$} estimates for the {H}ilbert transforms along a
  one-variable vector field},
        date={2013},
        ISSN={2157-5045},
     journal={Anal. PDE},
      volume={6},
      number={7},
       pages={1577\ndash 1600},
         url={http://dx.doi.org/10.2140/apde.2013.6.1577},
      review={\MR{3148061}},
}

\bib{Buckley}{article}{
      author={Buckley, Stephen~M.},
       title={Estimates for operator norms on weighted spaces and reverse
  {J}ensen inequalities},
        date={1993},
        ISSN={0002-9947},
     journal={Trans. Amer. Math. Soc.},
      volume={340},
      number={1},
       pages={253\ndash 272},
         url={http://dx.doi.org/10.2307/2154555},
      review={\MR{1124164}},
}

\bib{Carbery}{article}{
      author={Carbery, Anthony},
       title={Differentiation in lacunary directions and an extension of the
  {M}arcinkiewicz multiplier theorem},
        date={1988},
        ISSN={0373-0956},
     journal={Ann. Inst. Fourier (Grenoble)},
      volume={38},
      number={1},
       pages={157\ndash 168},
         url={http://www.numdam.org/item?id=AIF_1988__38_1_157_0},
      review={\MR{949003}},
}

\bib{CWW}{article}{
      author={Chang, S.-Y.~A.},
      author={Wilson, J.~M.},
      author={Wolff, T.~H.},
       title={Some weighted norm inequalities concerning the {S}chr\"odinger
  operators},
        date={1985},
        ISSN={0010-2571},
     journal={Comment. Math. Helv.},
      volume={60},
      number={2},
       pages={217\ndash 246},
         url={http://dx.doi.org/10.1007/BF02567411},
      review={\MR{800004}},
}

\bib{CorFef}{article}{
      author={C\'ordoba, A.},
      author={Fefferman, R.},
       title={On the equivalence between the boundedness of certain classes of
  maximal and multiplier operators in {F}ourier analysis},
        date={1977},
        ISSN={0027-8424},
     journal={Proc. Nat. Acad. Sci. U.S.A.},
      volume={74},
      number={2},
       pages={423\ndash 425},
      review={\MR{0433117}},
}

\bib{Dem}{article}{
      author={Demeter, Ciprian},
       title={Singular integrals along {$N$} directions in {$\Bbb R^2$}},
        date={2010},
        ISSN={0002-9939},
     journal={Proc. Amer. Math. Soc.},
      volume={138},
      number={12},
       pages={4433\ndash 4442},
         url={http://dx.doi.org/10.1090/S0002-9939-2010-10442-2},
      review={\MR{2680067}},
}

\bib{DDP}{article}{
      author={Demeter, Ciprian},
      author={Di~Plinio, Francesco},
       title={Logarithmic {$L^p$} bounds for maximal directional singular
  integrals in the plane},
        date={2014},
        ISSN={1050-6926},
     journal={J. Geom. Anal.},
      volume={24},
      number={1},
       pages={375\ndash 416},
         url={http://dx.doi.org/10.1007/s12220-012-9340-2},
      review={\MR{3145928}},
}

\bib{DPGTZK}{article}{
      author={Di~Plinio, Francesco},
      author={Guo, Shaoming},
      author={Thiele, Christoph},
      author={Zorin-Kranich, Pavel},
       title={Square functions for bi-lipschitz maps and directional
  operators},
        date={201706},
      eprint={1706.07111},
}

\bib{DPP}{article}{
      author={Di~Plinio, Francesco},
      author={Parissis, Ioannis},
       title={A sharp estimate for the {H}ilbert transform along finite order
  lacunary sets of directions},
        date={2017},
     journal={Israel J. Math., to appear},
      eprint={1704.02918},
}

\bib{Duo2011}{article}{
      author={Duoandikoetxea, Javier},
       title={Extrapolation of weights revisited: new proofs and sharp bounds},
        date={2011},
        ISSN={0022-1236},
     journal={J. Funct. Anal.},
      volume={260},
      number={6},
       pages={1886\ndash 1901},
         url={http://dx.doi.org/10.1016/j.jfa.2010.12.015},
      review={\MR{2754896}},
}

\bib{DuoMoy}{article}{
      author={Duoandikoetxea, Javier},
      author={Moyua, Adela},
       title={Weighted inequalities for square and maximal functions in the
  plane},
        date={1992},
        ISSN={0039-3223},
     journal={Studia Math.},
      volume={102},
      number={1},
       pages={39\ndash 47},
      review={\MR{1164631}},
}

\bib{Guo2}{article}{
      author={Guo, Shaoming},
       title={Hilbert transform along measurable vector fields constant on
  {L}ipschitz curves: {$L^2$} boundedness},
        date={2015},
        ISSN={2157-5045},
     journal={Anal. PDE},
      volume={8},
      number={5},
       pages={1263\ndash 1288},
         url={https://doi.org/10.2140/apde.2015.8.1263},
      review={\MR{3393679}},
}

\bib{PH}{article}{
    AUTHOR = {Hagelstein, Paul},
     TITLE = {Maximal operators associated to sets of directions of
              {H}ausdorff and {M}inkowski dimension zero},
 BOOKTITLE = {Recent advances in harmonic analysis and applications},
    SERIES = {Springer Proc. Math. Stat.},
    VOLUME = {25},
     PAGES = {131--138},
 PUBLISHER = {Springer, New York},
      YEAR = {2013},
     review={\MR{3066883}},
       URL = {https://doi.org/10.1007/978-1-4614-4565-4_13},
}

\bib{HP}{article}{
      author={Hagelstein, Paul},
      author={Parissis, Ioannis},
       title={Weighted {S}olyanik estimates for the strong maximal function},
        date={2018},
        ISSN={0214-1493},
     journal={Publ. Mat.},
      volume={62},
      number={1},
       pages={133\ndash 159},
         url={https://doi.org/10.5565/PUBLMAT6211807},
}

\bib{Hare}{article}{
    AUTHOR = {Hare, Kathryn E.},
     TITLE = {Maximal operators and {C}antor sets},
   JOURNAL = {Canad. Math. Bull.},
  FJOURNAL = {Canadian Mathematical Bulletin. Bulletin Canadien de
              Math\'ematiques},
    VOLUME = {43},
      YEAR = {2000},
    NUMBER = {3},
     PAGES = {330--342},
      ISSN = {0008-4395},
   MRCLASS = {42B25 (43A46)},
  review = {\MR{1776061}},
MRREVIEWER = {Charles N. Moore},
       URL = {https://doi.org/10.4153/CMB-2000-040-5},
}

\bib{Hunt}{article}{
      author={Hunt, Richard~A.},
       title={An estimate of the conjugate function},
        date={1972},
        ISSN={0039-3223},
     journal={Studia Math.},
      volume={44},
       pages={371\ndash 377},
         url={https://doi.org/10.4064/sm-44-4-371-377},
        note={Collection of articles honoring the completion by Antoni Zygmund
  of 50 years of scientific activity, IV},
      review={\MR{0338667}},
}

\bib{Karag}{article}{
      author={Karagulyan, G.~A.},
       title={On unboundedness of maximal operators for directional {H}ilbert
  transforms},
        date={2007},
        ISSN={0002-9939},
     journal={Proc. Amer. Math. Soc.},
      volume={135},
      number={10},
       pages={3133\ndash 3141},
         url={http://dx.doi.org/10.1090/S0002-9939-07-08731-X},
      review={\MR{2322743}},
}

\bib{Katz}{article}{
      author={Katz, Nets~Hawk},
       title={Maximal operators over arbitrary sets of directions},
        date={1999},
        ISSN={0012-7094},
     journal={Duke Math. J.},
      volume={97},
      number={1},
       pages={67\ndash 79},
         url={http://dx.doi.org/10.1215/S0012-7094-99-09702-8},
      review={\MR{1681088}},
}

\bib{Kurtz}{article}{
      author={Kurtz, Douglas~S.},
       title={Littlewood-{P}aley and multiplier theorems on weighted {$L^{p}$}\
  spaces},
        date={1980},
        ISSN={0002-9947},
     journal={Trans. Amer. Math. Soc.},
      volume={259},
      number={1},
       pages={235\ndash 254},
         url={http://dx.doi.org/10.2307/1998156},
      review={\MR{561835}},
}

\bib{LMP}{article}{
      author={{Laba}, I.},
      author={{Marinelli}, A.},
      author={{Pramanik}, M.},
       title={{On the maximal directional Hilbert transform}},
        date={2017-07},
     journal={preprint arXiv:1707.01061},
      eprint={1707.01061},
}

\bib{LacLi:mem}{article}{
      author={Lacey, Michael},
      author={Li, Xiaochun},
       title={On a conjecture of {E}. {M}. {S}tein on the {H}ilbert transform
  on vector fields},
        date={2010},
        ISSN={0065-9266},
     journal={Mem. Amer. Math. Soc.},
      volume={205},
      number={965},
       pages={viii+72},
         url={http://dx.doi.org/10.1090/S0065-9266-10-00572-7},
      review={\MR{2654385}},
}

\bib{LTC}{article}{
      author={Lacey, Michael},
      author={Thiele, Christoph},
       title={A proof of boundedness of the {C}arleson operator},
        date={2000},
        ISSN={1073-2780},
     journal={Math. Res. Lett.},
      volume={7},
      number={4},
       pages={361\ndash 370},
         url={https://doi.org/10.4310/MRL.2000.v7.n4.a1},
      review={\MR{1783613}},
}

\bib{LacLi:tams}{article}{
      author={Lacey, Michael~T.},
      author={Li, Xiaochun},
       title={Maximal theorems for the directional {H}ilbert transform on the
  plane},
        date={2006},
        ISSN={0002-9947},
     journal={Trans. Amer. Math. Soc.},
      volume={358},
      number={9},
       pages={4099\ndash 4117},
         url={http://dx.doi.org/10.1090/S0002-9947-06-03869-4},
      review={\MR{2219012}},
}

\bib{NSW}{article}{
      author={Nagel, A.},
      author={Stein, E.~M.},
      author={Wainger, S.},
       title={Differentiation in lacunary directions},
        date={1978},
        ISSN={0027-8424},
     journal={Proc. Nat. Acad. Sci. U.S.A.},
      volume={75},
      number={3},
       pages={1060\ndash 1062},
      review={\MR{0466470}},
}

\bib{PR1}{article}{
      author={Parcet, Javier},
      author={Rogers, Keith~M.},
       title={Differentiation of integrals in higher dimensions},
        date={2013},
        ISSN={1091-6490},
     journal={Proc. Natl. Acad. Sci. USA},
      volume={110},
      number={13},
       pages={4941\ndash 4944},
         url={http://dx.doi.org/10.1073/pnas.1218928110},
      review={\MR{3047650}},
}

\bib{PR2}{article}{
      author={Parcet, Javier},
      author={Rogers, Keith~M.},
       title={Directional maximal operators and lacunarity in higher
  dimensions},
        date={2015},
        ISSN={0002-9327},
     journal={Amer. J. Math.},
      volume={137},
      number={6},
       pages={1535\ndash 1557},
         url={http://dx.doi.org/10.1353/ajm.2015.0038},
      review={\MR{3432267}},
}

\bib{SS}{article}{
      author={Sj\"ogren, P.},
      author={Sj\"olin, P.},
       title={Littlewood-{P}aley decompositions and {F}ourier multipliers with
  singularities on certain sets},
        date={1981},
        ISSN={0373-0956},
     journal={Ann. Inst. Fourier (Grenoble)},
      volume={31},
      number={1},
       pages={vii, 157\ndash 175},
         url={http://www.numdam.org/item?id=AIF_1981__31_1_157_0},
      review={\MR{613033}},
}

\bib{stein}{book}{
      author={Stein, Elias~M.},
       title={Singular integrals and differentiability properties of
  functions},
      series={Princeton Mathematical Series, No. 30},
   publisher={Princeton University Press, Princeton, N.J.},
        date={1970},
      review={\MR{0290095}},
}

\bib{Stromberg}{article}{
      author={Str\"omberg, Jan-Olov},
       title={Maximal functions associated to rectangles with uniformly
  distributed directions},
        date={1978},
     journal={Ann. Math. (2)},
      volume={107},
      number={2},
       pages={399\ndash 402},
      review={\MR{0481883}},
}

\bib{Wilson}{book}{
      author={Wilson, Michael},
       title={Weighted {L}ittlewood-{P}aley theory and exponential-square
  integrability},
      series={Lecture Notes in Mathematics},
   publisher={Springer, Berlin},
        date={2008},
      volume={1924},
        ISBN={978-3-540-74582-2},
      review={\MR{2359017}},
}

\end{biblist}
\end{bibdiv}

%%%%%%%%%%%%%%%%%%%%%%%%%%%%%% SECTION SECTION SECTION

\end{document}